\newcommand{\eps}{\varepsilon}
\newcommand{\Blam}{{\bm{\lambda}}}
\newcommand{\BGam}{{\bm{\Gamma}}}
\newcommand{\BPsi}{{\bm{\Psi}}}
\newcommand{\Beta}{{\bm{\eta}}}
\newcommand{\BA}{{\bm A}}
\newcommand{\BU}{{\bm U}}
\newcommand{\BV}{{\bm V}}
\newcommand{\BB}{{\bm B}}
\newcommand{\BD}{{\bm D}}
\newcommand{\BJ}{{\bm J}}
\newcommand{\BI}{{\bm I}}
\newcommand{\Bu}{{\bm u}}
\newcommand{\Bp}{{\bm p}}
\newcommand{\But}{{\bm{ \tilde{u} } }}
\newcommand{\Bx}{{\bm x}}
\newcommand{\Bz}{{\bm z}}
\newcommand{\Bv}{{\bm v}}
\newcommand{\Be}{{\bm e}}
\newcommand{\Bw}{{\bm w}}
\newcommand{\BK}{{\bm K}}
\newcommand{\Bf}{{\bm f}}
\newcommand{\lambdaMin}{\underline{\lambda}}
\newcommand{\lambdaMax}{\overline{\lambda}}
\newcommand{\lamIn}{\lambda^{\text{in}}}    
\newcommand{\lamOut}{\lambda^{\text{out}}}
\newcommand{\corr}{K}
\newcommand{\Dsf}{\mathsf{D}}
\newcommand{\Bell}{{\bm B_\ell}}           
\newcommand{\Bellti}{{\bm{\tilde{B}}_\ell}}
\newcommand{\Bellh}{{\bm B_{\hat \ell}}}
\theoremstyle{definition}
\newtheorem{theorem}{Theorem}[section]
\newtheorem{corollary}[theorem]{Corollary}
\newtheorem{proposition}[theorem]{Proposition}
\newtheorem{lemma}[theorem]{Lemma}
\newtheorem{definition}[theorem]{Definition}
\newtheorem{remark}[theorem]{Remark}
\title{Efficient and accurate separable models for discrete material optimization: A continuous perspective
}
\author{P. Gangl$^{1,2}$, N. Nees$^2$, M. Stingl$^2$ \vspace{5mm}\\
$^1$ Johann Radon Institute for Computational and Applied Mathematics (RICAM), \\Altenberger Straße 69, 4040 Linz\\
$^2$ Friedrich-Alexander-Universität Erlangen-Nürnberg, \\Cauerstraße 11, 91058 Erlangen}
\begin{document}
\maketitle

\begin{abstract}
   Multi-material design optimization problems can, after discretization, be solved by the iterative solution of simpler sub-problems which approximate the original problem at an expansion point to first order. In particular, models constructed from convex separable first order approximations have a long and successful tradition in the design optimization community and have led to powerful optimization tools like the prominently used method of moving asymptotes (MMA). In this paper, we introduce several new separable approximations to a model problem and examine them in terms of accuracy and fast evaluation. The models can, in general, be nonconvex and are based on the Sherman-Morrison-Woodbury matrix identity on the one hand, and on the mathematical concept of topological derivatives on the other hand. We show a surprising relation between two models originating from these two -- at a first sight -- very different concepts.
   Numerical experiments show a high level of accuracy for two of our proposed models while also their evaluation can be performed efficiently once enough data has been precomputed in an offline phase. Additionally it is demonstrated that suboptimal decisions can be avoided using our most accurate models. 
\end{abstract}

\textbf{Highlights}
\begin{itemize}
    \item An efficient and easy-to-use separable model for discrete material optimization is discussed 
    \item A highly accurate model based on the topological derivative for triangular inclusion shapes is introduced
    \item A highly accurate model based on the Sherman-Morrison-Woodbury matrix identity is introduced
    \item A surprising connection between these two accurate models is established
    \item Efficient numerical evaluation of all models as well as their accuracy are discussed
\end{itemize}

\tableofcontents

%%%%%%%%%%%%%%%%%%%%%%%%%%%%%%%%%%%%%%
%%%%%%%%%%%%%%%%%%%%%%%%%%%%%%%%%%%%%%
%%%%%%%%%%%%%%%%%%%%%%%%%%%%%%%%%%%%%%
\section{Introduction}
The goal of computational design optimization is to find an optimal arrangement of possibly multiple materials inside a design region of a computational domain. Here, optimality is considered with respect to a given cost function, which most often depends on the solution of a constraining partial differential equation (PDE). Thus, a general PDE-constrained design optimization problem reads
\begin{align} \label{eq_intro_optiDesign}
    \Omega^* = \underset{\Omega}{\mbox{arg min }} J(\Omega, u) \quad \mbox{ subject to } \quad e(\Omega; u) = 0
\end{align}
where $e(\cdot; \cdot)$ represents the PDE constraint and $\Omega$ can also be understood as a set of subdomains corresponding to different materials. There exist different classes of approaches to solving problems of this type. While shape optimization methods \cite{DZ2} can only modify existing boundaries or interfaces between subdomains in a smooth way, topology optimization approaches \cite{SigmundMauteReview2013} can also alter the topology of a design and can thus admit more general solutions. In topology optimization, the design is most often represented by either a level set function \cite{AllaireJouveToader2004, AmstutzAndrae2006} or by means of a density function \cite{BendsoeSigmund2003} that interpolates between different material properties. Note that both kinds of approaches can be extended to the case of multiple materials, see, e.g., \cite{AllaireDapognyMichailidis2014, Gangl2020} or \cite{Cherriere2022}. Typically, the constraining PDE is solved by a numerical method, most often by the finite element method. When approaching a design optimization problem of the type \eqref{eq_intro_optiDesign} by a gradient-based approach, one first has to decide whether the differentiation with respect to the design variable should be carried out before or after discretization of the problem. It should be noted that, depending on the chosen discretization and parametrization of the design, these two ways need not yield the same discrete sensitivities \cite{GanglGfrerer2022}.

In this paper we are interested in a (multi-material) topology optimization problem which we parametrize by a density function. Here, we focus on the approach where we first discretize the problem and then aim to solve the discretized, finite-dimensional problem. Given a computational domain $\Dsf \subset \mathbb R^2$ which is discretized using a fixed structured mesh consisting of $m$ triangular elements with $n$ vertices,  we aim at finding the optimal material distribution $\Blam^* \in \mathbb R^m$ minimizing the heat compliance,
\begin{align} \label{eq_optiProb_intro}
    \Blam^* = \underset{\Blam \in \mathbb [\lambdaMin, \lambdaMax]^m}{\mbox{arg min}} \; \mathcal J(\Blam)
\end{align}
with $ \mathcal J(\Blam) := \Bf^\top \BK(\Blam)^{-1} \Bf$.
Here, $\BK(\Blam) \in \mathbb R^{n \times n}$ and $\Bf \in \mathbb R^n$ represent, respectively, the (invertible) stiffness matrix and the load vector corresponding to a discretization by means of piecewise linear, globally continuous finite elements where the material coefficient in the $\ell$-th element $T_\ell$ is given by $\Blam_\ell$, $\ell \in \{1, \dots, m\}$. The problem may be subject to additional constraints, e.g., on the volume of given materials, or enriched by terms that penalize the appearance of intermediate materials or that regularize the obtained designs by filtering \cite{Bourdin2001}. 

The idea of \textit{sequential global programming} (SGP) \cite{Semmler2018} is the following: Instead of solving an optimization problem like \eqref{eq_optiProb_intro} over $\mathbb R^m$ directly, one solves a sequence of simpler optimization problems with cost function $\hat{\mathcal J}[\Blam^{(k)}](\Blam)$ which approximate the original problem with cost function $\mathcal J(\Blam)$ at an expansion point $\Blam^{(k)}$ to first order. The solution of the simpler optimization problem at iteration $k$ is subsequently used as an expansion point $\Blam^{(k+1)}$ in iteration $k+1$, i.e.,
\begin{align*}
    \Blam^{(k+1)} = \underset{\Blam  \in \mathbb [\lambdaMin, \lambdaMax]^m}{\mbox{arg min}} \, \hat{\mathcal J}[\Blam^{(k)}](\Blam). 
\end{align*}
A class of approximate models that is of particular interest is the class of \textit{separable} models.
The use of convex, separable approximations has a long tradition in design optimization, see, e.g., \cite{Fleury1979,Bruyneel2002,Svanberg2002} and have lead to powerful software realizations like CONLIN \cite{Fleury1989} or the method of moving asymptotes \cite{Svanberg1987}.
Such models allow to solve the approximate optimization problem that is posed over $\mathbb R^m$ by solving merely $m$ one-dimensional optimization problems. These one-dimensional sub-problems can often be solved to global optimality. This observation holds true trivially for the more traditional convex separable approximations, used, e.g., in MMA. However, for separable approximations, convexity is not  a strict requirement. It is clear that the convergence speed of an SGP algorithm strongly depends on the quality of the approximating model $\hat{\mathcal J}[\Blam^{(k)}]$. In this paper, we propose different first order separable models $\hat{\mathcal J}[\Blam^{(k)}]$ and numerically examine them in terms of accuracy and efficiency of evaluation. But it is not only the efficiency, which is of interest. This becomes particularly evident, when topology optimization or discrete material optimization problems are studied. The usual way to deal with such problems is to use a combined relaxation and penalization scheme, see, e.g., \cite{BendsoeSigmund2003} for an introduction to that topic. While such approaches are very successful in practice, in extreme cases it can happen that any feasible integer solution satisfies first order optimality conditions for the continuous relaxations. Thus, there is a certain risk that rather poor local minimizers are obtained. While in literature so-called continuation strategies provide a viable concept to cope with that situation, in this paper we demonstrate that it is in particular the approximation quality in the sub-problem, which can help to avoid 'wrong' decisions taken in the course of the iterations.

We will investigate models $\hat{\mathcal J}$ that exploit the Sherman-Morrison-Woodbury matrix identity on the discrete level and are thus purely algebraic. On the other hand, we will consider the mathematical concept of topological derivatives \cite{NovotnySokolowski2013} which is a notion defined on a purely continuous setting. We will draw some, at a first glance, surprising connections between these two types of approaches and present some models that are at the same time accurate approximations of the original problem and cheap to evaluate.

The rest of this paper is organized as follows: In Section \ref{sec_2_prelim}, we introduce the continuous model problem and its finite element discretization and recall the notions of topological derivatives, separable approximations of optimization problems and also the Sherman-Morrison-Woodbury formula. Next, we introduce a first efficient model based on this formula that is based on a diagonal approximation of the stiffness matrix in Section \ref{sec_SMWdiag}. Subsequently, we introduce a different model that is motivated by the concept of topological derivatives in Section \ref{sec_modelTD}. In Section \ref{sec_SMW_W}, inspired by the procedure of the previous section, we introduce another accurate and efficient to evaluate approximation to the exact Sherman-Morrison-Woodbury model. A relation between these latter two models is established in Section \ref{sec_relations}. Finally, we examine all introduced models numerically in Section \ref{sec_numExp}.

%%%%%%%%%%%%%%%%%%%%%%%%%%%%%%%%%%%%%%
\section{Preliminaries} \label{sec_2_prelim}
In this section, we will introduce the model problem and collect some mathematical preliminaries, which we will make use of in later sections. In particular, we introduce the mathematical concept of topological derivatives, the concept of separable first order approximations of a continuously differentiable function $f : \mathbb R^N \rightarrow \mathbb R$ and recall the Sherman-Morrison-Woodbury matrix identity.

\textbf{Notation.} Vector quantities and matrices will be denoted by bold-face symbols and we will denote the $j$-th component of a vector $\Bv \in \mathbb R^N$ by a sub-index $\Bv_j$. We will denote the $i$-th Cartesian unit vector in $\mathbb R^N$ by $\Be^{(i)}$. The identity matrix of dimension $N$ will be denoted by the symbol $\BI_N$ and for a square matrix $\BA \in \mathbb R^{N\times N}$ we denote by $\mbox{diag}\BA\in \mathbb R^{N\times N}$ the diagonal matrix corresponding to $\BA$, i.e., $(\mbox{diag}\BA)_{i,i} = \BA_{i,i}$ and $(\mbox{diag}\BA)_{i,j} = 0$ for $i\neq j$, $i, j \in \{1,\dots N\}$. We denote by $B_r(x)$ the ball of radius $r$ centered at the point $x$. Moreover, given a set $A \subset \mathbb R^d$, we denote the characteristic function of the set $A$ by $\chi_A(x)$, i.e., $\chi_A(x) = 1$ if $x\in A$ and $\chi_A(x) =0$ otherwise. The space of square integrable functions over a domain $\Dsf$ is denoted by $L^2(\Dsf)$ and the subspace of $L^2(\Dsf)$ functions whose weak gradient is also a $L^2(\Dsf)$ function is denoted by $H^1(\Dsf)$. Finally, given a function $f$ defined on a domain $\Dsf$ and a subdomain $\Omega \subset \Dsf$, we will denote the restriction of $f$ to $\Omega$ by $f|_\Omega$.

\subsection{Model problem} \label{sec_modelProb}
As a model problem, we consider a stationary heat equation on a bounded Lipschitz domain $\Dsf \subset \mathbb R^d$. We are interested in finding the optimal material distribution within $\Dsf$ such that the heat compliance is minimized. We first state the model problem in its continuous version before introducing the discretized problem, which we are actually interested in. In this paper, we restrict ourselves to space dimensions $d=1$ and $d=2$, but remark that most concepts treated here can be extended (with some effort) also to three space dimensions.

\subsubsection{Continuous model problem for two materials} \label{sec_modelProb_cont}
For the spatial dimension $d \in \{1,2\}$, we consider the bounded Lipschitz domain $\Dsf =(0,1)^d \subset \mathbb R^d$ and a given heat source $f \in L^2(\Dsf)$. Given a polygonal set $\Omega \subset \Dsf$, let the piecewise constant heat conductivity $\lambda$ be defined by
\begin{align}
\lambda_\Omega(x) = 
    \begin{cases}
        \lamIn, & x \in \Omega, \\
        \lamOut, & x \in \Dsf \setminus \overline \Omega,
    \end{cases}
\end{align}
for two positive constants $\lamIn, \lamOut > 0$. We assume the boundary of the computational domain to be composed of a Dirichlet and a Neumann boundary, $\partial \Dsf = \Gamma_D \cup \Gamma_N$ with $\Gamma_D \cap \Gamma_N = \emptyset$, where inhomogeneous Dirichlet data $g_D$ and Neumann data $g_N$ are prescribed, respectively. We are interested in minimizing the heat compliance,
\begin{align} \label{eq_J}
    J(u) := \int_\Dsf f(x) \, u(x) \; \mbox dx + \int_{\Gamma_N} g_N(x) \, u(x) \; \mbox ds_x
\end{align}
subject to a stationary heat equation. The weak formulation of the problem at hand reads
\begin{subequations} \label{eq_prob}
    \begin{align}
    &\underset{\Omega}{\mbox{inf }} J(u) \label{eq_probJ}\\
        \mbox{s.t. } u \in V_g: \; \int_\Dsf \lambda_\Omega(x) \nabla u(x) \cdot \nabla v(x) \; \mbox dx &= \int_\Dsf f(x) v(x) \; \mbox dx + \int_{\Gamma_N} g_N(x) v(x) \; \mbox ds_x \quad \forall v \in V_0 \label{eq_probpde}
    \end{align}
\end{subequations}
with the ansatz and test spaces
\begin{align*}   
    V_g := \{v \in H^1(\Dsf): v|_{\Gamma_D} = g_D\}, \qquad V_0 :=  H^1_{\Gamma_D}(\Dsf) := \{v \in H^1(\Dsf): v|_{\Gamma_D} = 0\}.
\end{align*}
For a given subdomain $\Omega$ and assuming that $|\Gamma_D|>0$, due to the Lemma of Lax-Milgram, problem \eqref{eq_probpde} admits a unique solution which we denote by $u_\Omega$. Thus, we introduce the reduced cost function $\mathcal J(\Omega) := J(u_\Omega)$. We assume that the solution $u_\Omega$ is sufficiently regular such that a point evaluation of its gradient $\nabla u_\Omega(z)$ is well-defined for all points $z \in D \setminus \partial \Omega$. When the set $\Omega$ is clear from the context, we will drop the index $\Omega$ and just write $u$ instead of $u_\Omega$.
For simplicity and without loss of generality, we assume $g_D=0$ and thus have $V_g= V_0$. The general case can be obtained by homogenization of the Dirichlet data.
Note that, in the case $d=1$, the boundary of $\Dsf$ consists of two points, $\partial \Dsf = \{0,1\}$. Thus, integrals over $\Gamma_N \subset \partial \Dsf$ have to be understood as point evaluations.

The adjoint state corresponding to optimization problem \eqref{eq_prob} is the unique solution $p \in V_0$ of
\begin{align} \label{eq_defAdj}
    \int_\Dsf \lambda_\Omega(x) \nabla v(x) \cdot \nabla p(x) \; \mbox dx = - \int_\Dsf f(x) v(x) \;  \mbox dx - \int_{\Gamma_N}g_N(x) v(x) \; \mbox ds_x \qquad \forall v \in V_0.
\end{align}
Thus, it can be seen that $p  = -u$.

\subsubsection{Discrete model problem} \label{sec_discmodelprob}
Next, we introduce the discretization of \eqref{eq_prob} by means of piecewise linear, globally continuous finite elements. For that purpose, let $\mathcal T = \{ T_1, \dots, T_m \}$ denote a set of open simplicial elements (i.e., intervals in 1D or triangles in 2D) which form a subdivision of the computational domain $\Dsf$, i.e., 
\begin{align*}
    \overline \Dsf = \bigcup_{\ell=1}^m \overline T_\ell, \qquad T_i \cap T_j = \emptyset \mbox{ for } i \neq j.
\end{align*}
Moreover, we assume that the subdomain $\Omega$ is resolved by the mesh $\mathcal T$, i.e., $\partial \Omega \cap T_\ell = \emptyset$ for all $\ell \in \{1, \dots, m\}$. Let $n  \in \mathbb N$ denote the number of vertices in the mesh, $\{ \varphi_1, \dots, \varphi_n\}$ the nodal basis and $V_h := \text{span}\{\varphi_1, \dots, \varphi_n\} \cap H^1_{\Gamma_D}(\Dsf)$.

Let a vector of conductivity values $\Blam \in \mathbb R^m$ be given. Note that we will sometimes identify a vector $\Blam \in \mathbb R^m$ of material values with a piecewise constant material function $\lambda(x) := \sum_{\ell=1}^m \chi_{T_\ell}(x) \Blam_\ell$.
For given $\Blam \in \mathbb R^m$, the discrete version of the boundary value problem \eqref{eq_probpde} reads
\begin{align}
    \BK(\Blam) \Bu = \Bf
\end{align}
where the stiffness matrix $\BK(\Blam) \in \mathbb R^{n\times n}$ and the load vector $\Bf \in \mathbb R^n$ can be written as
\begin{align}
    \BK(\Blam) = \sum_{\ell=1}^m   \Blam_\ell \Bellti K^{(\ell)}_{loc} \Bellti^\top, \qquad \Bf = \sum_{\ell=1}^m \Bellti f^{(\ell)}_{loc}
\end{align}
with the local stiffness matrix $K^{(\ell)}_{loc} \in \mathbb R^{(d+1)\times (d+1)}$ and local load vector $f^{(\ell)}_{loc} \in \mathbb R^{d+1}$,
\begin{align*}
    \left(K^{(\ell)}_{loc} \right)_{i,j}  =& \int_{T_\ell} \nabla  \varphi_{\ell,j} \cdot \nabla  \varphi_{\ell,i} \; \mbox dx, \quad\quad\quad \; \quad\quad\quad i, j \in \{1, \dots, d+1\}, \\
    \left(f^{(\ell)}_{loc} \right)_{i}  =& \int_{T_\ell} f   \varphi_{\ell,i} \; \mbox dx + \int_{\Gamma_N \cap \bar T_\ell} g_N  \varphi_{\ell,i} \; \mbox ds_x, \quad i \in \{1, \dots, d+1\},
\end{align*}
and the local-to-global operator $\Bellti \in \mathbb R^{n\times (d+1)}$ satisfying $(\Bellti)_{i,j} = 1$ if $i$ is the global index of the $j$-th vertex of element $T_\ell$, and $(\Bellti)_{i,j} = 0$ else. Here, $\varphi_{\ell,i}$, $i=1,\dots, d+1$, denotes the $i$-th basis functions that has non-zero support on $T_\ell$.
Since we are dealing with piecewise linear and globally continuous finite elements, the local stiffness matrix can be written as
\begin{align}
    K^{(\ell)}_{loc} = \BD_\ell  \BD_\ell^\top
\end{align}
with some constant matrices $\BD_\ell \in \mathbb R^{(d+1)\times d}$ depending solely on the coordinates of the vertices of element $T_\ell$. Thus, defining $\Bell := \Bellti \BD_\ell \in \mathbb R^{n \times d} $, the stiffness matrix can also be written as
\begin{align}
    \BK(\Blam) = \sum_{\ell=1}^m \Blam_\ell \Bell \Bell^\top.
\end{align}

\begin{remark}
    In dimension $d=1$, the matrix $\BD_\ell \in \mathbb R^{2\times 1}$ corresponding to an element $T_\ell = (x_{\ell-1}, x_{\ell})$ is given by
    \begin{align*}
        \BD_\ell = \frac{1}{\sqrt{x_{\ell} - x_{\ell-1}}} \begin{pmatrix} -1 \\ 1 \end{pmatrix}.
    \end{align*}
    For $d=2$ and a triangular element $T_\ell$ with vertices $\Bx_{\ell,1}, \Bx_{\ell,2}, \Bx_{\ell,3}$ in counter-clockwise enumeration, the matrix $\BD_\ell \in \mathbb R^{3 \times 2}$ reads
    \begin{align*}
        \BD_\ell = \sqrt{ \frac12 \mbox{det}\BJ_\ell } \begin{pmatrix} -1 & -1 \\ 1 & 0 \\ 0 & 1 \end{pmatrix} \BJ_\ell^{-1}
    \end{align*}
    with $\BJ_\ell = \begin{pmatrix} \Bx_{\ell,2} - \Bx_{\ell,1} & \Bx_{\ell,3} - \Bx_{\ell,1} \end{pmatrix} \in \mathbb R^{2 \times 2}$.
\end{remark}

Finally, the Dirichlet boundary conditions on nodes $\Bv^{(i)}$ on $\overline{\Gamma}_D$ are incorporated by setting $(\BK(\Blam))_{i,i} = 1$ and $(\BK(\Blam))_{i,j} = (\BK(\Blam))_{j,i} = 0$ for $i \neq j$ and $\Bf_i = g_D(\Bv_i)$.
Note that, for $\Blam \in [\lambdaMin, \lambdaMax]^m$ with $\lambdaMin>0$, the stiffness matrix after incorporation of the Dirichlet boundary conditions is invertible. Thus, we can define the solution vector
\begin{align}
    \Bu(\Blam) := \BK(\Blam)^{-1} \Bf
\end{align}
and the corresponding discrete solution $u_h(x) := \sum_{i=1}^n \Bu_i \varphi_i(x)$,
and we introduce the discrete compliance function $\mathcal J: \mathbb [\lambdaMin, \lambdaMax]^m \rightarrow \mathbb R$,
\begin{align} \label{eq_defDiscrCompl}
    \mathcal J(\Blam) := \Bf^\top  \BK(\Blam)^{-1} \Bf.
 \end{align}

\begin{remark} \label{rem_gray_reg}
    In order to obtain practically interesting multi-material designs, the discretized problem should additionally include a mechanism to penalize intermediate material. This can be done by adding a term of the form $J_{\text{gray}}(\Blam) = \sum_\ell (\Blam_\ell - \lamIn)(\lamOut - \Blam_\ell)$ (or an extension of this to multiple materials) to the cost function.
    On the other hand, it is well-known that topology optimization problems of the type \eqref{eq_optiProb_intro} often lack a solution which can be observed numerically in the form of mesh-dependent optimized designs. In order to obtain a well-defined problem, one typically introduces a length scale in the form of a filter radius. This can be realized by adding a term of the form $J_{\text{reg}}(\Blam) = \| \mathbf F_R \Blam - \Blam \|_2^2$ to the cost function. Here, $\mathbf F_R$ is a filtering operator with a given length scale $R$. Thus, one typically is interested in minimizing an enriched cost function $\tilde{\mathcal J}(\Blam) = \mathcal J(\Blam) + \gamma_1 J_{\text{gray}}(\Blam) + \gamma_2 J_{\text{reg}}(\Blam)$. Since the functionals $J_{\text{gray}}(\Blam)$ and $J_{\text{reg}}(\Blam)$ are often separable and can be evaluated efficiently by default, we will ignore these terms for the rest of this paper. For a more detailed discussion on this aspect for multi-material topology optimization, see \cite[Sec. 2.2]{NeesEtAl2022}.
\end{remark}
 
 Later on, we will make use of the following relation.
 \begin{lemma} \label{lem_Bltu}
    Let $u_h$ a piecewise linear and globally continuous finite element function on a given simplicial mesh in $\mathbb R^d$, $d \in \{1,2\}$ with vector of basis coefficients $\Bu \in \mathbb R^n$ and let $\Bell = \Bellti \BD_\ell$ defined as above. Then it holds for any $\ell \in \{1,\dots, m\}$
    \begin{align} \label{eq_Bltu}
        \Bell^\top \Bu = \sqrt{|T_\ell|} \, \nabla u_h|_{T_\ell}.
    \end{align}
\end{lemma}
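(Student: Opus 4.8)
The plan is to reduce the claimed identity, which is phrased in terms of the global coefficient vector, to a purely local statement about the single element $T_\ell$, and then to verify that local statement directly from the explicit expressions for $\mathbf{D}_\ell$ recorded in the Remark above.

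First I would unfold the definition $\Bell = \Bellti \BD_\ell$, so that $\Bell^\top \Bu = \BD_\ell^\top (\Bellti^\top \Bu)$. By the definition of the local-to-global operator $\Bellti$, the vector $\Bellti^\top \Bu \in \mathbb R^{d+1}$ is exactly the vector $\Bu^{(\ell)}$ of nodal values of $u_h$ at the $d+1$ vertices of $T_\ell$, i.e.\ $(\Bu^{(\ell)})_i = \Bu_{g(\ell,i)}$ with $g(\ell,i)$ the global index of the $i$-th vertex of $T_\ell$. On the other hand, on $T_\ell$ we have $u_h = \sum_{i=1}^{d+1} (\Bu^{(\ell)})_i \varphi_{\ell,i}$, hence $\nabla u_h|_{T_\ell} = \sum_{i=1}^{d+1} (\Bu^{(\ell)})_i \nabla \varphi_{\ell,i}$, which is a constant vector in $\mathbb R^d$ since the $\varphi_{\ell,i}$ are affine on $T_\ell$. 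Since $\Bu$ (equivalently $\Bu^{(\ell)}$) is arbitrary, the asserted equality $\BD_\ell^\top \Bu^{(\ell)} = \sqrt{|T_\ell|}\,\sum_i (\Bu^{(\ell)})_i \nabla \varphi_{\ell,i}$ is equivalent to the single statement that, for every $i \in \{1,\dots,d+1\}$, the $i$-th row of $\BD_\ell$ equals $\sqrt{|T_\ell|}\,\nabla \varphi_{\ell,i}^\top$.

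It then remains to check this row identity from the formulas for $\BD_\ell$. For $d=1$ it is immediate: with $h_\ell = x_\ell - x_{\ell-1} = |T_\ell|$ one has $\nabla \varphi_{\ell,1} = -1/h_\ell$ and $\nabla \varphi_{\ell,2} = 1/h_\ell$ on $T_\ell$, so $\sqrt{|T_\ell|}\,\nabla \varphi_{\ell,i}$ equals $\mp 1/\sqrt{h_\ell}$, which matches the two entries of $\BD_\ell$. For $d=2$ I would use the affine parametrization $\Bx = \Bx_{\ell,1} + \BJ_\ell \hat{\Bx}$ of $T_\ell$ over the reference triangle with vertices $(0,0),(1,0),(0,1)$ and reference nodal functions $\hat\varphi_1 = 1 - \hat x_1 - \hat x_2$, $\hat\varphi_2 = \hat x_1$, $\hat\varphi_3 = \hat x_2$. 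The rows of the fixed matrix $\begin{pmatrix}-1 & -1\\ 1 & 0\\ 0 & 1\end{pmatrix}$ are precisely $\nabla \hat\varphi_i^\top$, and since $\varphi_{\ell,i} = \hat\varphi_i \circ (\BJ_\ell^{-1}(\cdot - \Bx_{\ell,1}))$, the chain rule gives $\nabla \varphi_{\ell,i}^\top = \nabla \hat\varphi_i^\top \BJ_\ell^{-1}$. Hence the $i$-th row of $\begin{pmatrix}-1 & -1\\ 1 & 0\\ 0 & 1\end{pmatrix}\BJ_\ell^{-1}$ is $\nabla \varphi_{\ell,i}^\top$, and multiplying by the scalar $\sqrt{\tfrac12 \det \BJ_\ell} = \sqrt{|T_\ell|}$ (using $|T_\ell| = \tfrac12 \det \BJ_\ell$ for the counter-clockwise orientation) produces exactly $\sqrt{|T_\ell|}\,\nabla \varphi_{\ell,i}^\top$, as required.

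The one point that deserves care — and the only place where something could go wrong — is that this identity genuinely depends on the particular choice of $\BD_\ell$, not merely on the factorization $K^{(\ell)}_{loc} = \BD_\ell \BD_\ell^\top$: replacing $\BD_\ell$ by $\BD_\ell \mathbf Q$ with $\mathbf Q \in \mathbb R^{d\times d}$ orthogonal leaves $K^{(\ell)}_{loc}$ unchanged but rotates $\Bell^\top \Bu$. So the proof must use the concrete expressions for $\BD_\ell$ rather than only $K^{(\ell)}_{loc} = \BD_\ell \BD_\ell^\top$; the relation $|T_\ell|\,\nabla \varphi_{\ell,i}\cdot \nabla \varphi_{\ell,j} = (K^{(\ell)}_{loc})_{i,j}$ is a useful consistency check but is not by itself sufficient. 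Everything else is routine bookkeeping with transposes and row/column conventions.
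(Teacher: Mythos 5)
Your proof is correct and follows essentially the same route as the paper's: both reduce to the local element via $\Bellti^\top \Bu$ being the vector of local nodal values and then use the explicit form of $\BD_\ell$ together with the reference-element chain rule and $\det \BJ_\ell = 2|T_\ell|$; your per-basis-function row identity $(\BD_\ell)_{i,:} = \sqrt{|T_\ell|}\,\nabla\varphi_{\ell,i}^\top$ is just a linearity-reorganized version of the paper's direct computation with the nodal-value differences. Your closing observation that the argument genuinely needs the concrete $\BD_\ell$ and not merely the factorization $K^{(\ell)}_{loc}=\BD_\ell\BD_\ell^\top$ is a valid and worthwhile point not made explicit in the paper.
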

\begin{proof}
    First note that $\Bu^\top \Bell = \Bu^\top \Bellti \BD_\ell$ and that $\Bu^\top \Bellti$ is the vector of local degrees of freedom on element $T_\ell$. Thus, for $d=1$, we get
        $\Bu^\top \Bell = [\Bu_{\ell,1}, \Bu_{\ell,2}] \BD_\ell = \sqrt{|T_\ell|}(\Bu_{\ell,2} - \Bu_{\ell,1})/|T_\ell|$. The assertion follows by recalling that $|T_\ell|$ is the length of the interval $T_\ell$ and that $u_h$ is linear on $T_\ell$.

        In order to see the relation in the case $d=2$, let $\Phi_\ell$ denote the affine transformation with Jacobian matrix $\BJ_\ell$ that maps the reference triangle with vertices $(0,0)^\top$, $(1,0)^\top$, $(0,1)^\top$ to the given physical triangle $T_\ell$. Recall that, by the chain rule $(\nabla u_h)\circ \Phi_\ell = \BJ_\ell^{-\top} \nabla\hat u $ with $\hat u =u_h\circ \Phi_\ell$ and thus $\nabla\hat u^\top \BJ_\ell^{-1} = ((\nabla u_h)\circ \Phi_\ell)^\top$. Now we have
        \begin{align*}
            \Bu^\top \Bell = [\Bu_{\ell,1}, \Bu_{\ell,2}, \Bu_{\ell,3}] \BD_\ell = \sqrt{ \frac12 \mbox{det}\BJ_\ell } [\Bu_{\ell,2}-\Bu_{\ell,1},\Bu_{\ell,3}-\Bu_{\ell,1} ] \BJ_\ell^{-1},
        \end{align*}
        and noting that $[\Bu_{\ell,2}-\Bu_{\ell,1},\Bu_{\ell,3}-\Bu_{\ell,1} ] = \nabla \hat u^\top$ and $\mbox{det}\BJ_\ell = 2 |T_\ell|$ finishes the proof.
\end{proof}

\paragraph{Chosen meshes.}
Given a refinement level $n_{\text{ref}} \in \{4,5,6\}$, we use a structured mesh with $2^{n_{\text{ref}}}+1$ many uniformly distributed points per dimension. For $d=1$ this corresponds to a uniform grid with $n=2^{n_{\text{ref}}}+1$ points and $m= 2^{n_{\text{ref}}}$ elements. For $d=2$, we have $n = (2^{n_{\text{ref}}}+1)^2$ and $m= 2^{2n_{\text{ref}}+1}$ triangular elements. The triangles are obtained by dividing each square in the Cartesian grid created by the vertices into two triangles with a diagonal connecting the bottom left and top right vertex of a square, see Figure \ref{fig_mesh_elTypes}. As a result, our triangular mesh contains only two types of triangles (both being isosceles right triangles): Element type 1 having the right angle on the bottom right, and element type 2 having the right angle on the top left, see Figure~\ref{fig_mesh_elTypes}.

\begin{figure}
    \begin{center}
        \includegraphics[width=.35\textwidth]{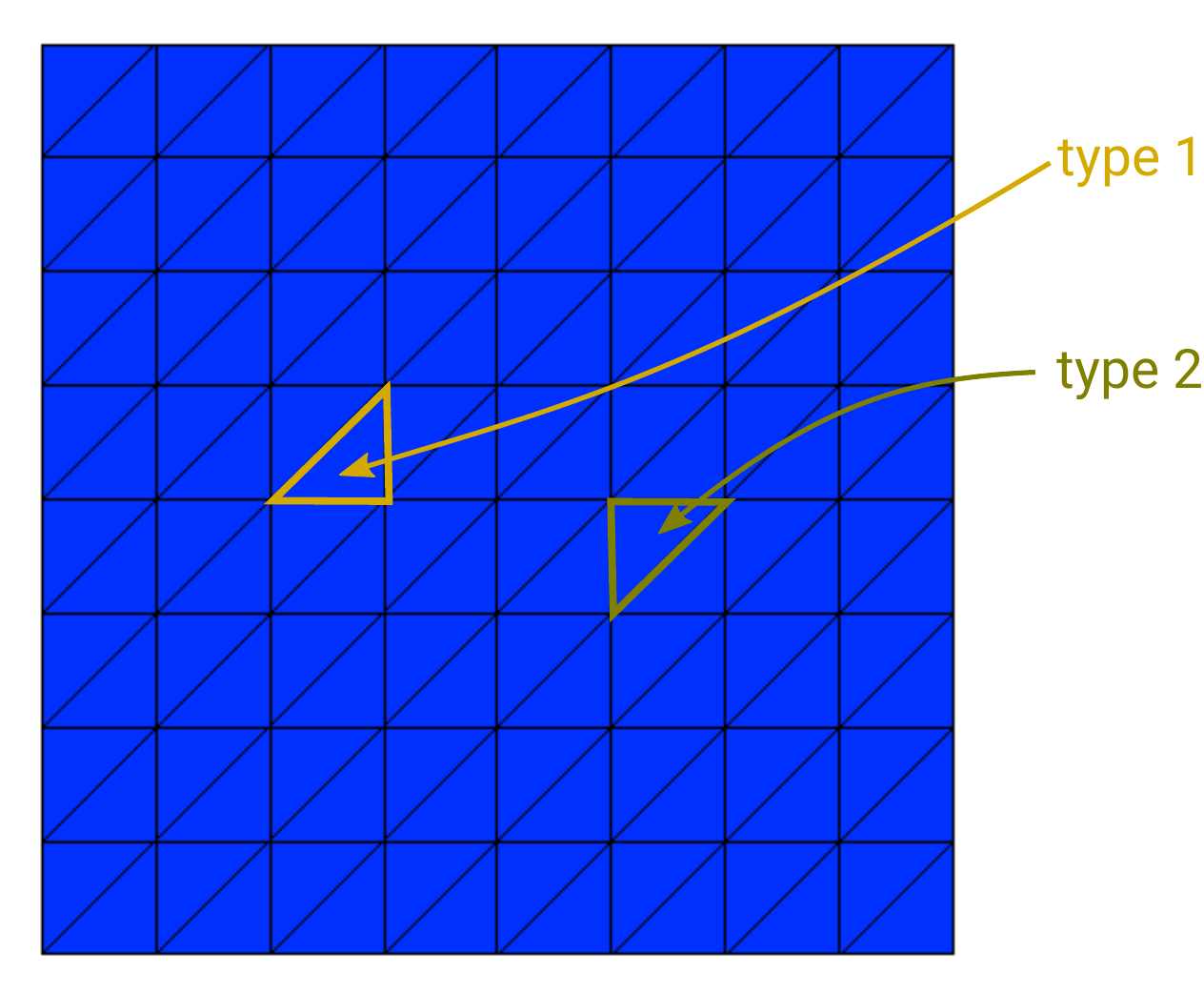}
    \end{center}
    \caption{Example of structured mesh as used in this paper consisting of only two different types of elements.}
    \label{fig_mesh_elTypes}

\end{figure}

\subsection{Topological derivative}
\label{sec_topder}
The topological derivative of a shape function $\mathcal J = \mathcal J(\Omega)$ represents the sensitivity of $\mathcal J$ with respect to a local topological perturbation of the domain $\Omega$ around an inner point $z$. Consider the setting introduced in Section~\ref{sec_modelProb_cont}  where $\Omega$ denotes a subdomain of a domain $\Dsf$. Let $\omega \subset \mathbb R^d$ with $\mathbf 0 \in \omega$ represent the shape of the considered perturbation, e.g., $\omega = B_1(\mathbf 0)$ the unit ball for circular inclusion shapes, and let $z \in \Omega \cup (\Dsf \setminus \overline \Omega)$. For $\eps > 0$, we define the perturbation of shape $\omega$ and size $\eps$ as $\omega_\eps(z) := z + \eps \omega$.
\begin{definition}[topological derivative] \label{def_TD}
    The topological derivative of a shape function $\mathcal J$ at the point $z\in \Omega \cup (\Dsf \setminus \overline \Omega)$ with respect to the inclusion shape $\omega$ is defined by
    \begin{align} \label{eq_defTD}
        d \mathcal J(\Omega)(z, \omega) := \begin{cases}
        \underset{\eps \searrow 0}{\mbox{lim }} \frac{1}{|\omega_\eps|} (\mathcal J(\Omega \setminus \overline \omega_\eps) - \mathcal J(\Omega) ), & z \in \Omega, \\
        \underset{\eps \searrow 0}{\mbox{lim }} \frac{1}{|\omega_\eps|} (\mathcal J(\Omega \cup \omega_\eps) - \mathcal J(\Omega) ), & z \in \Dsf \setminus \overline \Omega.
                                           \end{cases}
    \end{align}
\end{definition}

\begin{remark}
    Note that Definition \ref{def_TD} is equivalent to stating that $d\mathcal J(\Omega)(z,\omega)$ is the first term in a topological asymptotic expansion of the form (here for $z \in \Dsf \setminus \overline \Omega$)
    \begin{align} \label{eq_topAsympExp}
        \mathcal J(\Omega \cup \omega_\eps) = \mathcal J(\Omega) + |\omega_\eps| d \mathcal J(\Omega)(z, \omega) + o(|\omega_\eps|).
    \end{align}

\end{remark}

In general, the topological derivative of PDE-constrained topology optimization problems with elliptic PDE constraints where the principal part of the PDE operator is perturbed involves the solution to an exterior corrector equation, which we define in the following. We refer the reader to \cite{GanglSturm_TDauto} for a comprehensive introduction to the numerical computation of topological derivatives for arbitrary inclusion shapes.
\begin{definition} \label{def_corrK}
    Let $\omega \subset \mathbb R^d$ open with $0 \in \omega$ and let
    \begin{align}
        \lambda_\omega^{i \rightarrow j}(x) = \chi_{\omega}(x) \lambda^j + \chi_{\mathbb R^d \setminus \overline \omega}(x) \lambda^i
    \end{align}
    for $i, j \in \{\text{in},\text{out}\}$, $i\neq j$. Furthermore, for $z \in \Omega \cup (\Dsf \setminus \overline \Omega)$, let $\nabla u(z)$ denote the point evaluation of the gradient of the solution $u$ to \eqref{eq_probpde} at $z$. For any given $\zeta \in \mathbb R^d$, we define the corrector function $\corr_\omega[\lambda^i, \lambda^j;\zeta] \in \dot{BL}(\mathbb R^d)$ for switching from material $\lambda^i$ to $\lambda^j$, $i, j \in \{\text{in},\text{out}\}$, $i\neq j$, as the unique solution to
    \begin{align} \label{eq_defCorr}
        \int_{\mathbb R^d} \lambda_\omega^{i \rightarrow j} \nabla \corr_\omega[\lambda^i, \lambda^j;\zeta](x) \cdot \nabla v(x) \;  \mbox dx = -(\lambda^j - \lambda^i) \int_\omega \zeta \cdot \nabla v(x) \; \mbox dx
    \end{align}
    for all $v \in BL(\mathbb R^d)$.
\end{definition}

Here, $BL(\mathbb R^d) := \{v \in H^1_{\text{loc}}(\mathbb R^d): \nabla v \in L^2(\mathbb R^d)^d \}$ denotes the so-called Beppo-Levi space of locally square integrable functions whose gradient is square integrable over the whole unbounded domain and $\dot{BL}(\mathbb R^d) := BL(\mathbb R^d)/\mathbb R$ is the space of equivalence classes where the constants are factored out, see also \cite{a_DELI_1955a, a_GAST_2020a}.

\begin{remark} \label{rem_Klinear}
Note that, for $i, j \in \{\text{in},\text{out}\}$, $i\neq j$, the mapping $\zeta \mapsto \corr_\omega[\lambda^i, \lambda^j;\zeta]$ is linear and we have for $d=2$
    \begin{align*}
        \corr_\omega[\lambda^i, \lambda^j;\nabla u(z)] = \frac{\partial u}{\partial x_1}(z) \corr_\omega[\lambda^i, \lambda^j;\Be^{(1)}] + \frac{\partial u}{\partial x_2}(z) \corr_\omega[\lambda^i, \lambda^j;\Be^{(2)}].
    \end{align*}

\end{remark}

\begin{proposition} \label{prop_TDomega}
    Let $\omega \in \mathbb R^d$ open with $0 \in \omega$. Let further $p$ be the adjoint state defined in \eqref{eq_defAdj} and $\corr_\omega[\lamIn, \lamOut;\cdot]$, $\corr_\omega[\lamOut, \lamIn;\cdot]$ according to Definition  \ref{def_corrK}. The topological derivative of problem \eqref{eq_prob} with respect to $\omega$ for $z \in \Dsf \setminus \overline \Omega$ reads
\begin{align} \label{eq_TD_Corr}
    d \mathcal J[\lamOut, \lamIn](\Omega)(z, \omega) = (\lamIn- \lamOut) \frac{1}{|\omega|} \int_\omega ( \nabla u(z) + \nabla \corr_\omega[\lamOut, \lamIn;\nabla u(z)](x)) \cdot \nabla p(z) \; \mbox dx.
\end{align}
Likewise, for $z \in \Omega$, the topological derivative is given by
\begin{align} \label{eq_TD_Corr2}
    d \mathcal J[\lamIn, \lamOut](\Omega)(z, \omega) = (\lamOut- \lamIn) \frac{1}{|\omega|} \int_\omega ( \nabla u(z) + \nabla \corr_\omega[\lamIn, \lamOut;\nabla u(z)](x)) \cdot \nabla p(z) \; \mbox dx.
\end{align}
\end{proposition}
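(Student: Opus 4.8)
The plan is to follow the by-now standard route for topological derivatives of elliptic transmission problems: first derive an \emph{exact} representation of the perturbed cost difference that is supported on the inclusion, then rescale to the unit inclusion and identify the limit of the rescaled state by means of the corrector equation of Definition~\ref{def_corrK}. Throughout I would treat the case $z \in \Dsf \setminus \overline\Omega$ in detail; the case $z \in \Omega$ then follows by interchanging $\lamIn$ and $\lamOut$ (and replacing $\Omega \cup \omega_\eps$ by $\Omega \setminus \overline{\omega_\eps}$), which flips the sign of the material jump and yields \eqref{eq_TD_Corr2}. I would also use from the outset that the problem is self-adjoint, $p = -u$, which makes the adjoint-based manipulations below elementary.

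First I would set $\Omega_\eps := \Omega \cup \omega_\eps(z)$, write $\lambda_\eps$ and $u_\eps$ for the perturbed conductivity and state, and note that $\lambda_\eps - \lambda_\Omega = \lamIn - \lamOut$ on $\omega_\eps$ and vanishes elsewhere. Subtracting the state equations for $u_\eps$ and $u$ gives the difference identity $a_{\lambda_\eps}(u_\eps - u, v) = -(\lamIn-\lamOut)\int_{\omega_\eps}\nabla u \cdot \nabla v \,\mathrm dx$ for all test functions, where $a_\lambda(\cdot,\cdot)$ denotes the bilinear form. Since the cost is the compliance, $\mathcal J(\Omega_\eps) - \mathcal J(\Omega) = \ell(u_\eps - u)$ with $\ell$ the load functional; testing the unperturbed state equation with $u_\eps - u$, then using symmetry of $a_{\lambda_\eps}$ and the difference identity (tested with $u$), a short manipulation collapses everything to the exact formula
\[
    \mathcal J(\Omega_\eps) - \mathcal J(\Omega) = -(\lamIn - \lamOut)\int_{\omega_\eps}\nabla u \cdot \nabla u_\eps \,\mathrm dx = (\lamIn - \lamOut)\int_{\omega_\eps}\nabla u_\eps \cdot \nabla p \,\mathrm dx ,
\]
the last equality using $p = -u$. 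Changing variables $x = z + \eps y$ and dividing by $|\omega_\eps| = \eps^d|\omega|$ turns the right-hand side into $(\lamIn-\lamOut)\frac{1}{|\omega|}\int_\omega \nabla u_\eps(z+\eps y)\cdot \nabla p(z+\eps y)\,\mathrm dy$.

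The core of the proof is then to pass to the limit $\eps \searrow 0$ in this averaged integral. The factor $\nabla p(z+\eps y)$ converges to $\nabla p(z)$ uniformly on $\omega$ by the assumed regularity of $p$ near $z$. For the other factor I would introduce the rescaled corrector-type quantity $w_\eps(y) := \eps^{-1}\big(u_\eps(z+\eps y) - u_\eps(z) - \eps\,\nabla u(z)\cdot y\big)$ on the dilated domain $\eps^{-1}(\Dsf - z)$, which exhausts $\mathbb R^d$, and show that $\nabla w_\eps$ converges to $\nabla \corr_\omega[\lamOut,\lamIn;\nabla u(z)]$ in $L^2$, so that $\nabla u_\eps(z+\eps\,\cdot\,) \to \nabla u(z) + \nabla\corr_\omega[\lamOut,\lamIn;\nabla u(z)]$ and \eqref{eq_TD_Corr} follows. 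The corrector $\corr_\omega[\lamOut,\lamIn;\cdot]$ is the right one here because inside the added inclusion the conductivity switches from the exterior value $\lamOut$ to the interior value $\lamIn$. Rescaling the difference identity from the previous paragraph shows that $w_\eps$ solves a variational problem which, after replacing $\nabla u(z+\eps y)$ by $\nabla u(z) + o(1)$, formally converges to the corrector equation \eqref{eq_defCorr} with $\zeta = \nabla u(z)$.

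The hard part is making this limit rigorous, and it is where I would invest the bulk of the work (or, alternatively, cite \cite{NovotnySokolowski2013,GanglSturm_TDauto}). Concretely I would need: (i) a uniform-in-$\eps$ energy bound on $\|\nabla w_\eps\|_{L^2}$, obtained by testing with $w_\eps$ and controlling the right-hand side with a Beppo--Levi / weighted Poincaré inequality on the exterior of $\omega$, so that the natural function space is $\dot{BL}(\mathbb R^d)$; (ii) weak $L^2$-convergence of $\nabla w_\eps$ along a subsequence; (iii) identification of the weak limit as $\nabla \corr_\omega[\lamOut,\lamIn;\nabla u(z)]$ via the uniqueness of the solution to \eqref{eq_defCorr}, which promotes subsequential convergence to convergence of the whole family; and (iv) quantitative control of the remainder terms generated by Taylor-expanding $\nabla u$ and $\nabla p$ about $z$ and by the mismatch between $\Dsf$ and $\mathbb R^d$, confirming that the error is genuinely $o(|\omega_\eps|)$ as required by \eqref{eq_topAsympExp}. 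Step (iv) is the main obstacle: it relies on decay estimates for the corrector at infinity; for $\omega$ a ball or ellipse the corrector is explicit and these estimates are classical, while for the polygonal inclusions used later the needed bounds are those established in \cite{GanglSturm_TDauto}. Finally, I would record that repeating the above verbatim for $z \in \Omega$ with $\lamIn \leftrightarrow \lamOut$ (and $\Omega\setminus\overline{\omega_\eps}$ in place of $\Omega\cup\omega_\eps$) produces \eqref{eq_TD_Corr2}.
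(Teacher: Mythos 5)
Your proposal is correct and follows essentially the same route as the paper: the paper itself only cites \cite{a_GAST_2020a} for a detailed proof and outlines the identical adjoint-based computation (exact representation of the cost difference supported on the inclusion, rescaling to the reference shape, passage to the corrector equation) in Section~\ref{sec_derivTDtrig}. Your use of $p=-u$ to collapse the exact identity to $-(\lamIn-\lamOut)\int_{\omega_\eps}\nabla u\cdot\nabla u_\eps\,\mathrm dx$ is equivalent to the paper's two-term expression, and your deferral of the rigorous limit passage (energy bounds, identification of the limit in the Beppo--Levi space, remainder estimates) to the literature matches the paper's treatment.
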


\begin{proof}
    For a detailed proof, see, e.g., \cite{a_GAST_2020a}. Moreover, the idea of the proof is outlined in Section \ref{sec_derivTDtrig} where the focus is on triangular inclusion shapes.
\end{proof}

\begin{definition}[weak polarization matrix]
    For $\omega \in \mathbb R^d$ open with $0 \in \omega$, $\zeta \in \mathbb R^d$ and $i,j\in \{\text{in}, \text{out}\}, i\neq j$, let $\corr_\omega[\lambda^i, \lambda^j; \zeta]$ be as defined in Definition \ref{def_corrK}. We introduce the weak polarization matrix
    \begin{align} \label{eq_defPolMat}
        \mathcal P_\omega[\lambda^i, \lambda^j] = \left[ \frac{1}{|\omega|} \int_\omega \nabla \corr_\omega[\lambda^i, \lambda^j; \Be^{(1)}]\mbox dx \quad \frac{1}{|\omega|} \int_\omega  \nabla \corr_\omega[\lambda^i, \lambda^j; \Be^{(2)}]\mbox dx \right] \in \mathbb R^{d\times d}.
    \end{align}
\end{definition}
Using \eqref{eq_defPolMat} and Remark \ref{rem_Klinear}, in the case $z \in \Dsf \setminus \overline \Omega$, we can also write \eqref{eq_TD_Corr} as
\begin{align} \label{eq_TD_IpP}
    d \mathcal J[\lamOut, \lamIn](\Omega)(z, \omega) = (\lamIn - \lamOut) \nabla u(z)^\top (\BI_2 + \mathcal P_\omega[\lamOut, \lamIn]) \nabla p(z),
\end{align}
and an analogous formula is obtained for the case $z \in \Omega$.

It can be seen that the evaluation of the topological derivative at a single point $z$ involves the solution of problem \eqref{eq_defCorr} with $\zeta = \nabla u(z)$. 
In the special cases where $\omega$ is an interval ($d=1$), a disk or ellipse ($d=2$) or a ball or ellipsoid ($d=3$), the solution to problem \eqref{eq_defCorr} can be written explicitly in a closed form. For $d \in \{2,3\}$ and $\omega = B_1(0)$ we have
$\nabla \corr_\omega[\lambda^i, \lambda^j; \nabla u(z)]|_\omega = -(\lambda^j - \lambda^i)/(\lambda^j + (d-1)\lambda^i) \nabla u(z)$ and thus $\mathcal P_\omega[\lambda^i, \lambda^j] = -(\lambda^j - \lambda^i)/(\lambda^j + (d-1)\lambda^i) \BI_d$ and \cite{Amstutz2006}
\begin{align} \label{eq_TDformula_analytic}
    d \mathcal J[\lambda^i, \lambda^j](\Omega)(z, B_1(0)) = d \lambda^i \frac{\lambda^j - \lambda^i}{\lambda^j + (d-1)\lambda^i} \nabla u(z) \cdot \nabla p (z).
\end{align}
For $d=1$ and $\omega = (-1,1)$ we have $\mathcal P_\omega[\lambda^i, \lambda^j] = -(\lambda^j-\lambda^i)/\lambda^j \in \mathbb R$ and thus \cite{Amstutz2021}
\begin{align*}
    d \mathcal J[\lambda^i, \lambda^j](\Omega)(z, B_1(0)) = \frac{\lambda^i}{\lambda^j}(\lambda^j - \lambda^i) u'(z) p'(z).
\end{align*}

\begin{remark}
    For a general inclusion shape $\omega$, \eqref{eq_defCorr} cannot be solved analytically. The same holds true for the case of quasilinear PDE constraints where the problem corresponding to \eqref{eq_defCorr} is quasilinear as well. In these cases, however, it is still possible to get a good approximation to the topological derivative values \eqref{eq_TD_Corr}, \eqref{eq_TD_Corr2} by computing a numerical approximation of the solution of \eqref{eq_defCorr} on a comparably large, but bounded domain $B_R(0) \supset \omega$ (e.g. $R=30$) with homogeneous Dirichlet boundary conditions at $\partial B_R(0)$. This procedure is motivated by the fact that the solution $\corr^{i\rightarrow j}_\omega$ to \eqref{eq_defCorr} often can be shown to decay as $|x| \rightarrow \infty$. We refer the reader to \cite{GanglSturm_TDauto} for a detailed discussion of this aspect. We will also follow this approach in Section \ref{sec_modelTD} for triangular shaped inclusions $\omega$.

    Note that, when no closed form solution is available, in the case of linear PDE constraints with only two different materials $\lamIn$, $\lamOut$ it is sufficient to have access to (an approximation of) $\corr_\omega[\lamIn, \lamOut; \Be^{(k)}]$ and $\corr_\omega[\lamOut, \lamIn; \Be^{(k)}]$ for $k=1, \dots, d$ in order to (approximately) evaluate the topological derivative in the full domain in an efficient way. Thus, problem \eqref{eq_defCorr} has to be solved numerically only $2d$ many times in a pre-computation phase. For the case of quasilinear PDE constraints this precomputation phase is more involved, see \cite{AmstutzGangl2019, GanglSturm2021Hcurl}.
\end{remark}

\subsection{Separable approximations}
Problem \eqref{eq_optiProb_intro} can be solved efficiently by the idea of sequential global programming (SGP) \cite{Semmler2018} where the original optimization problem is replaced by a sequence of simpler sub-problems. For these sub-problems it is beneficial to have approximations of the original objective functions, which are separable, since then solving the sub-problem reduces to the solution of several univariate optimization problems. The following definitions can also be found in \cite{NeesEtAl2022}.
\begin{definition}[separable function]
    Let $N \in \mathbb N$.
    A function $g : \mathbb R^N \rightarrow \mathbb R$ is called \textit{separable} if there exist functions $g_1, \dots, g_N : \mathbb R \rightarrow \mathbb R$ and a constant $g_0 \in \mathbb R$ such that for all $\Bx \in \mathbb R^N$
    \begin{align*}
        g(\Bx) = g_0 + \sum_{i=1}^N g_i(\Bx_i).
    \end{align*}
\end{definition}
We call a model $g$ exact if it still coincides with the original function $f$ when only one component is perturbed.
\begin{definition}[separable exact model]
    Let $N \in \mathbb N$, $\overline \Bx \in \mathbb R^N$ and $f: \mathbb R^N \rightarrow \mathbb R$ be given. A separable function $g: \mathbb R^N \rightarrow \mathbb R$ is called a \textit{separable exact model} of $f$ at $\overline \Bx$ if
    \begin{align} \label{eq_sepExact}
        g(\overline \Bx + \delta x \, \Be^{(i)}) = f(\overline \Bx + \delta x \, \Be^{(i)})
    \end{align}
    for all $i \in \{1, \dots N\}$ and all $\delta x \in \mathbb R$.
\end{definition}

\begin{definition}[separable first order approximation]
    Let $N \in \mathbb N$, $\mathcal I \subset \mathbb R^N$, $\overline \Bx \in \mathcal I$ and $f \in C^1(\mathcal I, \mathbb R)$ be given. A function $g \in C^1(\mathcal I, \mathbb R)$ is called a \textit{separable first order approximation} of $f$ at $\overline \Bx$ if $g$ is separable and
    \begin{align}
        f(\overline \Bx) = g(\overline \Bx) \qquad \mbox{ and } \qquad \nabla f(\overline \Bx) = \nabla g(\overline \Bx).
    \end{align}
\end{definition}

Note that if a function $g$ is a separable exact model of a function $f$ it is also a separable first order approximation. In the following lemma we show how, for any given function $f$, a separable exact model can be constructed.

\begin{lemma} \label{lem_sepExFirst}
    Let $N \in \mathbb N$, $\overline \Bx \in \mathbb R^N$ and $f: \mathbb R^N \rightarrow \mathbb R$ be given. For $j \in \{1, \dots, N\}$ define 
    \begin{align}
        g_j(\overline \Bx; \cdot ) : \mathbb R \rightarrow \mathbb R, \qquad g_j(\overline \Bx;s) = f(\overline \Bx + (s - \overline \Bx_j) \Be^{(j)}) - f(\overline \Bx).
    \end{align}
    Then, $g$ defined by
    \begin{align} \label{eq_lem1}
        g : \mathbb R^N \rightarrow \mathbb R, \qquad g(\Bx) = f(\overline \Bx) + \sum_{j=1}^N g_j(\overline \Bx;\Bx_j)
    \end{align}
    is a separable exact model of $f$ at $\overline \Bx$.
\end{lemma}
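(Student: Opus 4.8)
The plan is to verify directly that the function $g$ defined in \eqref{eq_lem1} satisfies the two defining requirements of a separable exact model: separability and the exactness condition \eqref{eq_sepExact}. Separability is essentially immediate from the construction, so the only real content is checking the exactness identity, which should reduce to a telescoping-type simplification after substituting a one-coordinate perturbation $\overline{\Bx} + \delta x\, \Be^{(i)}$.

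First I would observe that $g$ is separable: setting $g_0 := f(\overline{\Bx}) - \sum_{j=1}^N f(\overline{\Bx}) \cdot 0$, more precisely writing $g(\Bx) = g_0 + \sum_{j=1}^N \tilde g_j(\Bx_j)$ with $\tilde g_j(s) := g_j(\overline{\Bx}; s) = f(\overline{\Bx} + (s - \overline{\Bx}_j)\Be^{(j)}) - f(\overline{\Bx})$ and $g_0 := f(\overline{\Bx})$, we see each summand depends on a single coordinate $\Bx_j$ only, so $g$ is separable in the sense of the first definition.

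Next I would check \eqref{eq_sepExact}. Fix $i \in \{1,\dots,N\}$ and $\delta x \in \mathbb R$, and evaluate $g$ at $\Bx = \overline{\Bx} + \delta x\, \Be^{(i)}$, so that $\Bx_i = \overline{\Bx}_i + \delta x$ and $\Bx_j = \overline{\Bx}_j$ for $j \neq i$. Then
\begin{align*}
    g(\overline{\Bx} + \delta x\, \Be^{(i)}) &= f(\overline{\Bx}) + \sum_{j=1}^N g_j(\overline{\Bx}; \Bx_j) \\
    &= f(\overline{\Bx}) + g_i(\overline{\Bx}; \overline{\Bx}_i + \delta x) + \sum_{j \neq i} g_j(\overline{\Bx}; \overline{\Bx}_j).
\end{align*}
For $j \neq i$ the argument is $\Bx_j = \overline{\Bx}_j$, so $g_j(\overline{\Bx}; \overline{\Bx}_j) = f(\overline{\Bx} + (\overline{\Bx}_j - \overline{\Bx}_j)\Be^{(j)}) - f(\overline{\Bx}) = f(\overline{\Bx}) - f(\overline{\Bx}) = 0$; hence all off-diagonal terms vanish. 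For the $i$-th term, $g_i(\overline{\Bx}; \overline{\Bx}_i + \delta x) = f(\overline{\Bx} + (\overline{\Bx}_i + \delta x - \overline{\Bx}_i)\Be^{(i)}) - f(\overline{\Bx}) = f(\overline{\Bx} + \delta x\, \Be^{(i)}) - f(\overline{\Bx})$. Substituting these back yields $g(\overline{\Bx} + \delta x\, \Be^{(i)}) = f(\overline{\Bx}) + (f(\overline{\Bx} + \delta x\, \Be^{(i)}) - f(\overline{\Bx})) = f(\overline{\Bx} + \delta x\, \Be^{(i)})$, which is exactly \eqref{eq_sepExact}.

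This argument requires no hypothesis beyond what is stated (no continuity or differentiability of $f$ is needed), so there is no genuine obstacle; the only point demanding care is the bookkeeping that every $j \neq i$ term is $0$ because each $g_j$ was defined so as to vanish at the base point $\overline{\Bx}_j$. I would close by remarking (as the text already notes) that a separable exact model is in particular a separable first order approximation whenever $f \in C^1$, so no separate verification of the gradient-matching condition is required here.
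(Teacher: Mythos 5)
Your proof is correct and follows essentially the same route as the paper's: split off the $i$-th summand, note that every $g_j(\overline \Bx;\overline \Bx_j)$ with $j\neq i$ vanishes by construction, and observe that the remaining term telescopes to $f(\overline \Bx + \delta x\,\Be^{(i)})$. The only difference is that you spell out a few more of the bookkeeping details, which does no harm.
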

\begin{proof}
    It is obvious from \eqref{eq_lem1} that $g$ is separable. In order to see \eqref{eq_sepExact}, let $i \in \{1, \dots, N\}$ be fixed. Noting that
    \begin{align*}
        (\overline \Bx + \delta x \, \Be^{(i)})_j =\begin{cases} \overline \Bx_i + \delta x, & j=i, \\ \overline \Bx_j, & j \in \{1,\dots,N\} \setminus \{i\} , \end{cases}
    \end{align*}
    we have
    \begin{align*}
        g(\overline \Bx + \delta x \,\Be^{(i)}) =& f(\overline \Bx) + \sum_{j=1}^N g_j(\overline \Bx; (\overline \Bx + \delta x \,\Be^{(i)})_j) \\
        =& f(\overline \Bx) + g_i(\overline \Bx; \overline \Bx_i + \delta x) + \sum_{j \neq i} g_j(\overline \Bx; \overline \Bx_j) \\
        =& f(\overline \Bx) + f(\overline \Bx + \delta x \,\Be^{(i)}) - f(\overline \Bx)  = f(\overline \Bx + \delta x \,\Be^{(i)}),
    \end{align*}
    where we used $g_j(\overline \Bx; \overline \Bx_j) = 0$. This finishes the proof.
\end{proof}

While Lemma \ref{lem_sepExFirst} defines a separable exact model $g$ for any function $f$, it should be noted that the evaluation of $g$ involves $N+1$ function evaluations of $f$, which can in practice be prohibitively expensive (in particular when a function evaluation involves the solution of a PDE). Thus, considering the problem at hand \eqref{eq_optiProb_intro}, our goal in this paper is to find close approximations of the separable exact model defined by \eqref{eq_lem1} which are cheap to evaluate.

\subsection{The Sherman-Morrison-Woodbury formula in a finite element context}

 The following linear algebra result will prove useful for defining exact models in the context of discrete PDE-constrained material optimization. It gives a formula for the inverse of a perturbed matrix only in terms of the inverse of the unperturbed matrix.
\begin{lemma}[Sherman-Morrison-Woodbury formula \cite{golub2013matrix}] \label{lem_SMW}
    Let $N, k \in \mathbb N$ and matrices $\BA \in \mathbb R^{N \times N}$ invertible, $\BU \in \mathbb R^{N \times k}$, $\BV \in \mathbb R^{k \times N}$ be given. It holds
    \begin{align} \label{eq_SMW}
        (\BA+\BU \BV)^{-1} = \BA^{-1} - \BA^{-1} \BU (\BI_k + \BV \BA^{-1} \BU )^{-1} \BV \BA^{-1}.
    \end{align}
\end{lemma}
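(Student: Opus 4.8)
The plan is to prove the identity by direct verification: denote $\BS := \BI_k + \BV\BA^{-1}\BU \in \mathbb R^{k\times k}$, assume (as is implicit in the statement, since otherwise the right-hand side of \eqref{eq_SMW} is undefined) that $\BS$ is invertible, and check that the claimed expression, when multiplied by $\BA + \BU\BV$, gives $\BI_N$ on both sides. No fixed-point or limiting argument is needed; everything reduces to matrix arithmetic using only invertibility of $\BA$ and of $\BS$.

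First I would multiply on the left and expand, using $\BA\BA^{-1}=\BI_N$:
\begin{align*}
    (\BA + \BU\BV)\left( \BA^{-1} - \BA^{-1}\BU \BS^{-1}\BV\BA^{-1}\right)
    = \BI_N - \BU\BS^{-1}\BV\BA^{-1} + \BU\BV\BA^{-1} - \BU\BV\BA^{-1}\BU\BS^{-1}\BV\BA^{-1}.
\end{align*}
Then I would factor $\BU$ out on the left and $\BV\BA^{-1}$ out on the right of the last three terms, which yields
\begin{align*}
    \BI_N + \BU\left( -\BS^{-1} + \BI_k - \BV\BA^{-1}\BU\,\BS^{-1}\right)\BV\BA^{-1}
    = \BI_N + \BU\left( \BI_k - (\BI_k + \BV\BA^{-1}\BU)\,\BS^{-1}\right)\BV\BA^{-1}.
\end{align*}
Since $(\BI_k + \BV\BA^{-1}\BU)\,\BS^{-1} = \BS\BS^{-1} = \BI_k$, the bracket vanishes and the product equals $\BI_N$. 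An entirely analogous computation (multiplying on the right, factoring $\BA^{-1}\BU$ and $\BV$ out, and using $\BS^{-1}\BS = \BI_k$), or simply applying the already proven identity to the transposed data $\BA^\top, \BV^\top, \BU^\top$, shows that multiplication on the right by $\BA + \BU\BV$ also gives $\BI_N$. Hence the matrix on the right of \eqref{eq_SMW} is the two-sided inverse of $\BA + \BU\BV$.

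There is essentially no obstacle here: the statement is a routine algebraic identity, and the only care required is bookkeeping with the order of the generally non-commuting factors and spotting the telescoping $-\BS^{-1} + \BI_k - \BV\BA^{-1}\BU\,\BS^{-1} = \BI_k - \BS\BS^{-1}$. For completeness one could mention the alternative, more conceptual derivation via a bordered block matrix such as $\begin{pmatrix} \BA & -\BU \\ \BV & \BI_k \end{pmatrix}$: block Gaussian elimination from the top-left corner exhibits $\BA + \BU\BV$ as a Schur complement, elimination from the bottom-right corner exhibits $\BS$, and reading off the $(1,1)$ block of the inverse in these two ways reproduces \eqref{eq_SMW}. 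We will, however, use the direct verification above, as it requires nothing beyond the stated invertibility assumptions.
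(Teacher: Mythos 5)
Your verification is correct: the algebra checks out, the telescoping of the middle factor to $\BI_k - (\BI_k + \BV\BA^{-1}\BU)(\BI_k + \BV\BA^{-1}\BU)^{-1} = 0$ is exactly right, and you are also right to flag that invertibility of $\BI_k + \BV\BA^{-1}\BU$ is an implicit hypothesis (it is equivalent to invertibility of $\BA + \BU\BV$ given that $\BA$ is invertible). The paper itself gives no proof of this lemma --- it simply cites Golub and Van Loan --- so your direct two-sided multiplication argument, which is the standard textbook derivation, fills that in without any conflict with the paper's usage; the block-matrix/Schur-complement remark you mention is the usual conceptual alternative but is not needed here.
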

 
Lemma \ref{lem_SMW} gives rise to a separable exact model of the discrete compliance functional $\mathcal J = \mathcal J(\Blam)$ defined in \eqref{eq_defDiscrCompl}.
For that purpose, let the material distribution $\Blam \in [\lambdaMin, \lambdaMax]^m \in \mathbb R^m$ be given and fix an element $T_\ell$, $\ell \in \{1,\dots,m\}$. We consider a perturbation of $\Blam$ in this element and define the perturbed vector $\Beta := \Blam + (\eta - \Blam_\ell) \Be^{(\ell)}$ for some $\eta \in [\lambdaMin, \lambdaMax]$. Note that $\Beta$ coincides with $\Blam$ in all components except for component $\ell$ where it has value $\eta$ rather than $\Blam_\ell$. Recall the definition of the matrices $\Bell$, $\ell \in \{1,\dots, m\}$ from Section \ref{sec_discmodelprob}.

\begin{proposition} \label{prop_complSMW}
    Let $\Blam \in \mathbb R^m$, $\ell \in \{1,\dots, m\}$ fixed and $\Beta := \Blam + (\eta - \Blam_{\ell}) \Be^{(\ell)}$. Then
  \begin{align}
    \mathcal J(\Beta) = \mathcal J(\Blam) - |T_\ell| (\eta - \Blam_{\ell}) (\nabla u_h|_{T_\ell})^\top( \BI_d + (\eta - \Blam_{\ell})\Bell^\top \BK(\Blam)^{-1} \Bell)^{-1} \nabla u_h|_{T_\ell}.
  \end{align}
\end{proposition}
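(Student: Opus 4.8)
The plan is to apply the Sherman-Morrison-Woodbury formula (Lemma~\ref{lem_SMW}) directly to the perturbed stiffness matrix and then identify the resulting expression using Lemma~\ref{lem_Bltu}. First I would observe that, since $\BK(\Blam) = \sum_{j=1}^m \Blam_j \Bell[j] \Bell[j]^\top$ with the convention $\Bell = \Bell[\ell] = \Bellti \BD_\ell$, perturbing only component $\ell$ gives
\begin{align*}
    \BK(\Beta) = \BK(\Blam) + (\eta - \Blam_\ell)\, \Bell \Bell^\top.
\end{align*}
This is exactly a rank-$d$ update of $\BK(\Blam)$: take $\BA = \BK(\Blam)$, $\BU = (\eta - \Blam_\ell)\Bell \in \mathbb R^{n\times d}$ and $\BV = \Bell^\top \in \mathbb R^{d\times n}$ (or split the scalar factor symmetrically — either choice works). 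Since $\Blam \in [\lambdaMin,\lambdaMax]^m$ with $\lambdaMin > 0$, $\BK(\Blam)$ is invertible, so Lemma~\ref{lem_SMW} applies and yields
\begin{align*}
    \BK(\Beta)^{-1} = \BK(\Blam)^{-1} - (\eta - \Blam_\ell)\,\BK(\Blam)^{-1}\Bell\big(\BI_d + (\eta - \Blam_\ell)\Bell^\top \BK(\Blam)^{-1}\Bell\big)^{-1}\Bell^\top \BK(\Blam)^{-1}.
\end{align*}

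Next I would left- and right-multiply by $\Bf^\top$ and $\Bf$ to pass to the compliance. Using $\mathcal J(\Beta) = \Bf^\top \BK(\Beta)^{-1}\Bf$ and $\mathcal J(\Blam) = \Bf^\top \BK(\Blam)^{-1}\Bf$, and recalling $\Bu = \Bu(\Blam) = \BK(\Blam)^{-1}\Bf$, the middle factors collapse: $\Bf^\top \BK(\Blam)^{-1}\Bell = (\Bell^\top \BK(\Blam)^{-1}\Bf)^\top = (\Bell^\top \Bu)^\top$ by symmetry of $\BK(\Blam)^{-1}$. Hence
\begin{align*}
    \mathcal J(\Beta) = \mathcal J(\Blam) - (\eta - \Blam_\ell)\,(\Bell^\top \Bu)^\top\big(\BI_d + (\eta - \Blam_\ell)\Bell^\top \BK(\Blam)^{-1}\Bell\big)^{-1}(\Bell^\top \Bu).
\end{align*}
Then I would invoke Lemma~\ref{lem_Bltu}, which gives $\Bell^\top \Bu = \sqrt{|T_\ell|}\,\nabla u_h|_{T_\ell}$, so that $(\Bell^\top \Bu)^\top(\cdots)(\Bell^\top \Bu) = |T_\ell|\,(\nabla u_h|_{T_\ell})^\top(\cdots)(\nabla u_h|_{T_\ell})$, which is precisely the claimed identity.

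I do not expect a serious obstacle here; the result is essentially a bookkeeping exercise. The two points requiring a little care are: (i) checking that the SMW formula is legitimately applicable, i.e.\ that the small $d\times d$ matrix $\BI_d + (\eta - \Blam_\ell)\Bell^\top \BK(\Blam)^{-1}\Bell$ is invertible for the relevant range of $\eta$ — this follows because $\BK(\Beta)$ is itself invertible for $\eta \in [\lambdaMin,\lambdaMax]$ (positive definiteness of the perturbed stiffness matrix), and SMW guarantees the capacitance matrix is invertible whenever both $\BA$ and $\BA + \BU\BV$ are; and (ii) the placement of the scalar factor $(\eta - \Blam_\ell)$ — one must be consistent about whether it is absorbed into $\BU$, into $\BV$, or split, and verify that the final expression matches regardless. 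One should also note the minor subtlety that the Dirichlet rows/columns of $\Bell$ vanish (the local-to-global operator only touches interior-related structure after boundary incorporation), so the rank-$d$ update statement is unaffected by the boundary-condition modification of $\BK$. A brief remark to that effect, or simply working with the reduced system on interior degrees of freedom, suffices to make the argument airtight.
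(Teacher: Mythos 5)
Your proposal is correct and follows essentially the same route as the paper's proof: writing $\BK(\Beta) = \BK(\Blam) + (\eta - \Blam_\ell)\Bell\Bell^\top$, applying Lemma~\ref{lem_SMW} with $\BA = \BK(\Blam)$, $\BU = (\eta-\Blam_\ell)\Bell$, $\BV = \Bell^\top$, contracting with $\Bf$, and invoking Lemma~\ref{lem_Bltu}. Your additional remarks on the invertibility of the capacitance matrix and on the Dirichlet degrees of freedom are sensible points of care that the paper leaves implicit.
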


\begin{proof}
The stiffness matrices according to $\Blam$ and $\Beta$ read
\begin{align}   
    \BK(\Blam) = \sum_{k = 1}^m \Blam_k {B}_k {B}_k^\top \quad \mbox{ and } \quad \BK(\Beta) = \sum_{k = 1}^m \Beta_k {B}_k {B}_k^\top = \BK(\Blam) + (\eta-\Blam_{\ell}) \Bell \Bell^\top.
\end{align}
Thus, the inverse of $\BK(\Beta)$ can be obtained by means of Lemma \ref{lem_SMW} by setting $\BA = \BK(\Blam) \in \mathbb R^{n\times n}$, $\BU = (\eta-\Blam_{\ell})\Bell \in \mathbb R^{n\times d}$ and $\BV = \Bell^\top \in \mathbb R^{d\times n}$ as
\begin{align}
    \BK(\Beta)^{-1} = \BK(\Blam)^{-1} - (\eta - \Blam_{\ell})\BK(\Blam)^{-1} \Bell ( \BI_d + (\eta - \Blam_{\ell})\Bell^\top \BK(\Blam)^{-1} \Bell)^{-1} \Bell^\top \BK(\Blam)^{-1} .
\end{align}
Thus, defining $\But = \BK(\Beta)^{-1}\Bf$ and using that $\Bu = \BK(\Blam)^{-1}\Bf$ and $\BK(\Blam) = \BK(\Blam)^\top$, it holds
\begin{align}
   \Bf^\top \But = \Bf^\top \Bu - (\eta - \Blam_{\ell}) \Bu^\top \Bell ( \BI_d + (\eta - \Blam_{\ell})\Bell^\top \BK(\Blam)^{-1} \Bell)^{-1} \Bell^\top \Bu.
\end{align}
Noting that $\mathcal J(\Beta) = \Bf^\top \But$, $\mathcal J(\Blam) = \Bf^\top \Bu$ and, by Lemma \ref{lem_Bltu}, $\Bell^\top \Bu  = \sqrt{|T_\ell|} \nabla u_h|_{T_\ell}$ finishes the proof.
\end{proof}

\begin{remark} \label{rem_cost_nonselfadj}
    We remark that this procedure can also be followed for non-selfadjoint problems. Using the adjoint state $p_h \in V_h$ whose basis vector $\Bp$ is obtained as the solution of ${\BK(\Blam)^{\top}\Bp = - J'(\Bu)}$, in the case of a linear cost function $J$ we obtain
    \begin{align*}
       \mathcal J(\Beta) =  J(\tilde \Bu) =& J(\Bu) + J'(\Bu)(\tilde \Bu - \Bu) \\
        =& J(\Bu) + J'(\Bu)(\BK(\Beta)^{-1} - \BK(\Blam)^{-1}) \Bf \\
        =& J(\Bu) - J' (\Bu)(\eta - \Blam_{\ell})\BK(\Blam)^{-1} \Bell ( \BI_d + (\eta - \Blam_{\ell})\Bell^\top \BK(\Blam)^{-1} \Bell)^{-1} \Bell^\top \BK(\Blam)^{-1} \Bf \\
        =&\mathcal J(\Blam) +  |T_\ell|(\eta - \Blam_{\ell}) (\nabla p_h|_{T_\ell} )^\top( \BI_d + (\eta - \Blam_{\ell})\Bell^\top \BK(\Blam)^{-1} \Bell)^{-1}  \nabla u_h|_{T_\ell}
    \end{align*}
    If $J$ is not linear the above identity only holds up to a remainder of second order.
\end{remark}

From Lemma \ref{lem_sepExFirst}, we get the following result:
\begin{proposition}
Let $\Blam, \Beta \in \mathbb R^m$. The function $\hat{\mathcal J}_{\text{SMW}}$ defined by
    \begin{align} \label{eq_SMWexact}
        \hat{\mathcal J}_{\text{SMW}}(\Beta) := \mathcal J(\Blam) -\sum_{\ell=1}^m |T_\ell|   (\Beta_\ell -  \Blam_{\ell}) (\nabla u_h|_{T_\ell})^\top ( \BI_d + (\Beta_\ell -  \Blam_{\ell})\Bell^\top \BK(\Blam)^{-1} \Bell)^{-1} \nabla u_h|_{T_\ell}
    \end{align}
    is a separable exact model of $\mathcal J$ at $\Blam$.
\end{proposition}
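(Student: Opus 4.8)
The plan is to apply Lemma \ref{lem_sepExFirst} directly, using Proposition \ref{prop_complSMW} to identify the one-dimensional component functions $g_j$ explicitly. The key observation is that Proposition \ref{prop_complSMW} already computes exactly what we need: if we perturb $\Blam$ only in component $\ell$, replacing $\Blam_\ell$ by $s \in \mathbb R$, then the resulting objective value is $\mathcal J(\Blam) - |T_\ell|(s - \Blam_\ell)(\nabla u_h|_{T_\ell})^\top(\BI_d + (s-\Blam_\ell)\Bell^\top\BK(\Blam)^{-1}\Bell)^{-1}\nabla u_h|_{T_\ell}$. Hence the function $g_\ell(\Blam; s) := \mathcal J(\Blam + (s - \Blam_\ell)\Be^{(\ell)}) - \mathcal J(\Blam)$ from Lemma \ref{lem_sepExFirst} is precisely $-|T_\ell|(s-\Blam_\ell)(\nabla u_h|_{T_\ell})^\top(\BI_d + (s-\Blam_\ell)\Bell^\top\BK(\Blam)^{-1}\Bell)^{-1}\nabla u_h|_{T_\ell}$.

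The steps I would carry out, in order: First, invoke Proposition \ref{prop_complSMW} with $\eta = s$ to obtain the closed form for $\mathcal J(\Blam + (s-\Blam_\ell)\Be^{(\ell)})$ as a function of $s$. Second, subtract $\mathcal J(\Blam)$ to obtain $g_\ell(\Blam;s)$ and observe that $g_\ell(\Blam;\Blam_\ell) = 0$, which is consistent with the construction in Lemma \ref{lem_sepExFirst}. Third, plug these $g_\ell$ into formula \eqref{eq_lem1} of Lemma \ref{lem_sepExFirst}: this yields exactly the right-hand side of \eqref{eq_SMWexact} with $\Bx = \Beta$, i.e., $\hat{\mathcal J}_{\text{SMW}}(\Beta) = \mathcal J(\Blam) + \sum_{\ell=1}^m g_\ell(\Blam;\Beta_\ell)$. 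Fourth, conclude by Lemma \ref{lem_sepExFirst} that $\hat{\mathcal J}_{\text{SMW}}$ is a separable exact model of $\mathcal J$ at $\Blam$.

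There is essentially no obstacle here — the proposition is a bookkeeping corollary of the two lemmas already established. The only point requiring a moment of care is the matching of notation: Lemma \ref{lem_sepExFirst} is stated for a generic $f : \mathbb R^N \to \mathbb R$ and expansion point $\overline \Bx$, and one must set $N = m$, $f = \mathcal J$, $\overline \Bx = \Blam$, and check that the function defined by the single-component perturbation formula of Proposition \ref{prop_complSMW} coincides term-by-term with $g_\ell(\overline \Bx; \cdot)$ as defined there. One should also note that Proposition \ref{prop_complSMW} is stated for $\Blam \in \mathbb R^m$ without the box constraints, so the model \eqref{eq_SMWexact} is well-defined wherever $\BK(\Blam)$ and the relevant $(d\times d)$ matrices are invertible; in particular it is defined and smooth in a neighborhood of any admissible $\Blam \in [\lambdaMin,\lambdaMax]^m$, which is all that is needed for it to serve as a separable exact model (and hence, by the remark preceding Lemma \ref{lem_sepExFirst}, as a separable first order approximation) at $\Blam$.
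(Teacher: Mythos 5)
Your proposal is correct and follows exactly the paper's own argument: invoke Lemma \ref{lem_sepExFirst} with $f=\mathcal J$ and $\overline\Bx=\Blam$ to get the generic separable exact model, then substitute the closed form for the single-component perturbation from Proposition \ref{prop_complSMW}. The additional remarks on notation matching and well-definedness are fine but not needed beyond what the paper records.
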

\begin{proof}
    From Lemma \ref{lem_sepExFirst} we know that
    \begin{align}
      \hat{\mathcal J}_{\text{SMW}}(\Beta) = \mathcal J(\Blam)  + \sum_{\ell=1}^m  \left( \mathcal J(\Blam + (\Beta_\ell - \Blam_\ell) \Be^{(\ell)}) - \mathcal J(\Blam) \right)
     \end{align}
    is a separable exact model of $\mathcal J$ at $\Blam$. Plugging in the result of Proposition \ref{prop_complSMW} yields the assertion.
\end{proof}

Compared to the general separable exact model according to Lemma \ref{lem_sepExFirst}, which can be defined for any function, using the Sherman-Morrison-Woodbury formula we have found a closed form for a separable exact model for our given cost function in \eqref{eq_SMWexact}.
Of course, it can be seen that model \eqref{eq_SMWexact} still involves the inverse of the stiffness matrix for the material distribution given by $\Blam$ which one typically does not have access to. Even when the stiffness matrix has been factorized for computing the state $\Bu$, the evaluation of \eqref{eq_SMWexact} involves $m$ many forward/backward substitutions which amounts to a total effort in the order of $\mathcal O(m n^2)$ and is thus prohibitive for many real-world applications.
In the subsequent section, we will introduce an approximation of \eqref{eq_SMWexact} which can be evaluated efficiently.

\section{An efficient separable model based on the Sherman-Morrison-Woodbury formula} \label{sec_SMWdiag}
Recall the separable exact first order model \eqref{eq_SMWexact} which reads
\begin{align} \label{eq_SMWexactGam}
    \hat{\mathcal J}_{\text{SMW}}(\Beta) = \mathcal J(\Blam) -\sum_{\ell=1}^m |T_\ell|  (\Beta_\ell -  \Blam_{\ell}) (\nabla u_h|_{T_\ell})^\top \left( \BI_d - (\Beta_\ell -  \Blam_{\ell}) \BGam^{(\ell)}\right)^{-1} \nabla u_h|_{T_\ell}
\end{align}
with the definition $\BGam^{(\ell)} := - \Bell^\top \BK(\Blam)^{-1} \Bell \in \mathbb R^{d\times d}$ for all $\ell \in \{1, \dots, m\}$. Since the evaluation of the model involves the solution of a linear system with the system matrix $\BK(\Blam)$ for every element index $\ell$, we introduce an approximation which can be evaluated more efficiently. For that purpose, we simply approximate the inverse of the stiffness matrix by the inverse of the diagonal approximation of the stiffness matrix to obtain the model
\begin{align} \label{eq_SMWdiag_model}
    \hat{\mathcal J}_{\text{SMWdiag}}(\Beta) := \mathcal J(\Blam) -\sum_{\ell=1}^m   |T_\ell| (\Beta_\ell -  \Blam_{\ell}) (\nabla u_h|_{T_\ell})^\top \left( \BI_d - (\Beta_\ell -  \Blam_{\ell}) \BGam^{(\ell)}_\text{diag}\right)^{-1} \nabla u_h|_{T_\ell}
\end{align}
with
\begin{align}
    \BGam^{(\ell)}_\text{diag} := - \Bell^\top (\mbox{diag}\BK(\Blam))^{-1} \Bell \in \mathbb R^{d\times d}.
\end{align}
This model is a separable first order model, but it is no longer exact. Note that this idea was already proposed in \cite{NeesEtAl2022} in the context of a discrete dipole approximation method in an application from optics and is transferred to a finite element setting here.

In the following, we investigate model \eqref{eq_SMWdiag_model} in the one-dimensional case, where connections to the mathematical concepts of topological and shape derivatives can be established.
In spatial dimension $d=1$, we can compute the matrix $\BGam^{(\ell)}_\text{diag}$ explicitly.
\begin{lemma} \label{lem_SMWdiag_1d}
    Let $d=1$ and let a uniform mesh $\{T_1, \dots, T_m\}$ of the computational domain be given. Assume that element $T_\ell$ is occupied by material $\lamOut$ and also its two neighbors $T_{\ell-1}$, $T_{\ell+1}$ are occupied by the same material, i.e., $\Blam_{\ell-1} = \Blam_{\ell} = \Blam_{\ell+1} = \lamOut$. Then it holds
    \begin{align}
        \BGam^{(\ell)}_\text{diag} = -\frac{1}{\lamOut}
    \end{align}
    and, for $\Beta = \Blam + (\lamIn - \lamOut)\Be^{(\ell)}$,
    \begin{align} \label{eq_SMWdiag_1d}
        \hat{\mathcal J}_{\text{SMWdiag}}(\Beta) = \mathcal J(\Blam) - |T_\ell|\frac{\lamOut}{\lamIn} (\lamIn - \lamOut) (u_h'|_{T_\ell})^2.
    \end{align}

\end{lemma}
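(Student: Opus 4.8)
The plan is to compute $\BGam^{(\ell)}_\text{diag} = -\Bell^\top (\mbox{diag}\BK(\Blam))^{-1} \Bell$ explicitly for an interior element in $d=1$ and then substitute into the general one-dimensional form of \eqref{eq_SMWdiag_model}. First I would recall from the Remark following Lemma~\ref{lem_Bltu} that for a one-dimensional element $T_\ell = (x_{\ell-1}, x_\ell)$ we have $\BD_\ell = \frac{1}{\sqrt{|T_\ell|}}(-1,1)^\top$, so that $\Bell = \Bellti \BD_\ell \in \mathbb R^{n\times 1}$ is the vector with entries $-1/\sqrt{|T_\ell|}$ and $1/\sqrt{|T_\ell|}$ in the two rows corresponding to the global indices of the endpoints of $T_\ell$, and zero elsewhere. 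Consequently $\Bell\Bell^\top$ contributes $\pm 1/|T_\ell|$ to the four matrix entries indexed by those two vertices.

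Next I would compute the relevant diagonal entries of $\BK(\Blam) = \sum_{k=1}^m \Blam_k \Bell[k] \Bell[k]^\top$ — that is, the rows $i=x_{\ell-1}$ and $i=x_\ell$. Since the mesh is uniform, $|T_k| = h$ for all $k$ with $h = |T_\ell|$, and an interior vertex belongs to exactly two elements, so $(\mbox{diag}\BK(\Blam))_{x_{\ell-1}, x_{\ell-1}} = (\Blam_{\ell-1} + \Blam_\ell)/h$ and $(\mbox{diag}\BK(\Blam))_{x_\ell, x_\ell} = (\Blam_\ell + \Blam_{\ell+1})/h$. Using the hypothesis $\Blam_{\ell-1} = \Blam_\ell = \Blam_{\ell+1} = \lamOut$, both diagonal entries equal $2\lamOut/h$. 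Then $(\mbox{diag}\BK(\Blam))^{-1}$ restricted to those two indices is $\frac{h}{2\lamOut}$ times the identity, and since $\Bell$ has support only on those two indices,
\begin{align*}
    \BGam^{(\ell)}_\text{diag} = -\Bell^\top (\mbox{diag}\BK(\Blam))^{-1} \Bell = -\frac{h}{2\lamOut}\left(\frac{1}{h} + \frac{1}{h}\right) = -\frac{1}{\lamOut},
\end{align*}
which is the first claimed identity. (One should note that a vertex on the Dirichlet boundary would change this count, but the assumption that $T_{\ell-1}$, $T_\ell$, $T_{\ell+1}$ all exist and share material places $T_\ell$ in the interior, so both its vertices are interior vertices and the formula holds.)

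Finally I would plug $\BGam^{(\ell)}_\text{diag} = -1/\lamOut$ and $\Beta_\ell - \Blam_\ell = \lamIn - \lamOut$ into the $\ell$-th summand of \eqref{eq_SMWdiag_model}; all other summands vanish because $\Beta_k = \Blam_k$ for $k \neq \ell$. With $d=1$ the scalar factor becomes
\begin{align*}
    \left(1 - (\lamIn - \lamOut)\left(-\tfrac{1}{\lamOut}\right)\right)^{-1} = \left(\tfrac{\lamOut + \lamIn - \lamOut}{\lamOut}\right)^{-1} = \frac{\lamOut}{\lamIn},
\end{align*}
and $(\nabla u_h|_{T_\ell})^\top(\cdots)\nabla u_h|_{T_\ell}$ reduces to $\frac{\lamOut}{\lamIn}(u_h'|_{T_\ell})^2$, giving exactly \eqref{eq_SMWdiag_1d}. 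This is an entirely routine computation; the only real point of care — and the mild "obstacle" — is bookkeeping the global vertex indices and confirming that the interior-element hypothesis guarantees each vertex of $T_\ell$ sits in precisely two mesh elements so that the diagonal entries are $2\lamOut/h$ rather than $\lamOut/h$ or being overwritten by a Dirichlet condition.
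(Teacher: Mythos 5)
Your proposal is correct and follows essentially the same route as the paper: you compute the two relevant diagonal entries of the stiffness matrix (each equal to $2\lamOut/h$ for an interior vertex in a homogeneous region, which is exactly the paper's $a(\varphi_{\ell,1},\varphi_{\ell,1}) = a(\varphi_{\ell,2},\varphi_{\ell,2}) = 2\lamOut/h$), combine them with the two-entry structure of $\Bell$ to get $\BGam^{(\ell)}_{\text{diag}} = -1/\lamOut$, and then substitute into \eqref{eq_SMWdiag_model}. Your explicit remark that the existence of both neighbours places the vertices of $T_\ell$ in the interior, so the Dirichlet modification of the stiffness matrix does not interfere, is a nice touch that the paper leaves implicit.
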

\begin{proof}
    Using the definition of $\Bell$ from Section \ref{sec_discmodelprob}, we have
    \begin{align*}
        \Bell^\top (\mbox{diag} \BK(\Blam))^{-1} \Bell = h^{-1}  \left( \begin{array}{c} -1 \\ 1 \end{array} \right)^\top (\Bellti)^\top  (\mbox{diag} \BK(\Blam))^{-1} \Bellti   \left( \begin{array}{c} -1 \\ 1 \end{array} \right)
    \end{align*}
    where $h$ denotes the uniform mesh size.
    Using that $(\mbox{diag} \BK(\Blam) )^{-1}_{ii} =\frac{1}{a(\varphi_i, \varphi_i)}$ with $a(\varphi_i, \varphi_i ) = \int_\Dsf \lambda(x) |\nabla \varphi_i|^2 \; \mbox dx$ and
\begin{align}
    (\Bellti)^\top  (\mbox{diag} \BK(\Blam))^{-1} \Bellti = \left( \begin{array}{cc}
        \frac{1}{a(\varphi_{\ell,1}, \varphi_{\ell,1})} & 0\\
        0& \frac{1}{a(\varphi_{\ell,2}, \varphi_{\ell,2})}
                                                        \end{array}\right)
\end{align}
where $\varphi_{\ell,1}$ and $\varphi_{\ell,2}$ are the basis functions corresponding to the left and right node of element $T_\ell$, respectively, we get
\begin{align}
    \BGam^{(\ell)}_\text{diag} = - \Bell^\top (\mbox{diag} \BK(\Blam))^{-1} \Bell = -h^{-1} \left( \frac{1}{a(\varphi_{\ell,1}, \varphi_{\ell,1})} + \frac{1}{a(\varphi_{\ell,2}, \varphi_{\ell,2})} \right).
\end{align}
The first result follows by noting that $a(\varphi_{\ell,1}, \varphi_{\ell,1}) = a(\varphi_{\ell,2}, \varphi_{\ell,2}) = \frac{2 \lamOut}{h}$. The second identity follows by plugging in and noting that $\Beta_\ell = \lamIn$, $\Blam_\ell = \lamOut$.
\end{proof}

\begin{remark}[Relation to topological derivative in 1D]
    As pointed out in Section \ref{sec_topder}, the topological derivative of our optimization problem \eqref{eq_prob} at a point $z \in \Dsf \setminus \overline \Omega$ (i.e., where $\lambda_\Omega(z) = \lamOut$) with respect to $\omega = (-1,1)$ the one-dimensional unit ball reads
    \begin{align} \label{eq_dJoutin}
        d \mathcal J[\lamOut, \lamIn](\Omega)(z, \omega) = -\frac{\lamOut}{\lamIn}(\lamIn - \lamOut) (u'(z))^2,
    \end{align}
    where we used that $p = -u$ for our particular problem at hand. This means that, in one space dimension, the model $\hat{\mathcal J}_{\text{SMWdiag}}$ introduced in \eqref{eq_SMWdiag_model} actually coincides with the finite element discretization of the model that is naturally defined by the definition of the topological derivative
    \begin{align} \label{eq_TDmodel1d}
        \mathcal J(\Omega \cup \omega_\eps) \approx \mathcal J(\Omega) + |\omega_\eps| d\mathcal J(\Omega)(z, \omega),
    \end{align}
    see also \eqref{eq_topAsympExp}.

    We mention that this direct correspondence of the discrete model $\hat{\mathcal J}_{\text{SMWdiag}}$ and the closed-form formula of the topological derivative only works since the elements $T_\ell$ in a 1D mesh are scaled versions of the 1D unit ball $\omega = B_1(0) = (-1,1)$. This is no longer the case in two or three dimensions, where elements are polygonal or polyhedral.
\end{remark}

We remark that we also observed numerically that the finite element discretization of \eqref{eq_TDmodel1d} and the model \eqref{eq_SMWdiag_model} coincide in elements $T_\ell$ in homogeneous regions (i.e., where $\Blam_{\ell-1}=\Blam_{\ell}=\Blam_{\ell+1}$). In elements $T_\ell$ that are adjacent to the material interface $\partial \Omega$, however, Lemma \ref{lem_SMWdiag_1d} and thus formula \eqref{eq_SMWdiag_1d} are no longer valid. We observed that the topological derivative model \eqref{eq_TDmodel1d}, however, still yielded very good results. This can be explained by the following discussion on the relation between the topological and shape derivative in 1D.

\begin{remark}[Relation to shape derivative in 1D]
The shape derivative for moving the interface $\Gamma := \overline \Omega \cap \overline{ (\Dsf \setminus \Omega)}$ in the direction given by a vector field $V \in C^1( \mathbb R^d, \mathbb R^d)$ can, by the structure theorem of Hadamard-Zolesio \cite{DZ2}, always (under suitable smoothness assumptions) be written in the form
\begin{align*}
    d \mathcal J(\Omega; V) = \int_\Gamma L (V \cdot n) \; ds_x,
\end{align*}
for some scalar function $L$. Here $\Omega \subset \Dsf$ is the domain where $\lambda = \lamIn$ and $\Dsf \setminus \overline \Omega$ is where $\lambda = \lamOut$, and $n$ denotes the outer unit normal vector to $\Omega$. For our problem \eqref{eq_prob}, $L$ is given by the formula
\begin{align*}
   L = (\lamIn - \lamOut) (\nabla u\cdot \tau)(\nabla p\cdot \tau) -\left( \frac{1}{\lamIn} - \frac{1}{\lamOut} \right) (\lambda_\Omega \nabla u \cdot n) (\lambda_\Omega \nabla p \cdot n)
\end{align*}
with the tangential vector $\tau$ (see, e.g., \cite{AmstutzDapognyFerrer2018}). In the case where $d=1$ the tangential derivative of $u$ and thus the first term of $L$ vanishes. Using that $n$ is a scalar with $n^2=1$ and again that $p = -u$, we get
\begin{align*}
    L =  \left( \frac{1}{\lamIn}   - \frac{1}{\lamOut} \right) (\lamOut u_{\text{out}}'|_\Gamma)^2 = -\frac{\lamOut}{\lamIn}(\lamIn - \lamOut) (u_{\text{out}}'|_\Gamma)^2,
\end{align*}
where $u_{\text{out}}'|_\Gamma$ denotes the limit at $\Gamma$ of the discontinuous quantity $u'$ when coming from $\Dsf \setminus \overline \Omega$.

Note that this formula for the 1D shape derivative resembles the topological derivative formula \eqref{eq_dJoutin}, which explains why \eqref{eq_TDmodel1d} is a very good model in 1D.
\end{remark}

%%%%%%%%%%%%%%%%%%%%%%%%%%%%%%%%%%%%%%
%%%%%%%%%%%%%%%%%%%%%%%%%%%%%%%%%%%%%%

\section{A separable model based on the topological derivative} \label{sec_modelTD}
In this section, we propose a separable model that is based on the notion of the topological derivative. We fix the space dimension $d=2$. The topological derivative of a shape function $\mathcal J = \mathcal J(\Omega)$ with respect to a perturbation of shape $\omega$ around a spatial point $z$ was introduced in Section \ref{sec_topder}. We emphasize that, while closed-form formulas for the topological derivative only exist in the case of circular or elliptic inclusion shapes, a numerical approximation of the weak polarization matrix \eqref{eq_defPolMat} and thus of the topological derivative formulas \eqref{eq_TD_Corr} and \eqref{eq_TD_Corr2} is possible for arbitrary inclusion shapes $\omega$ with $0 \in \omega$, see also \cite{GanglSturm_TDauto}. We will follow this idea for the case of triangular inclusion shapes.

\begin{figure}
    \begin{center}
    \begin{tabular}{ccc}
        \includegraphics[width=.3\textwidth]{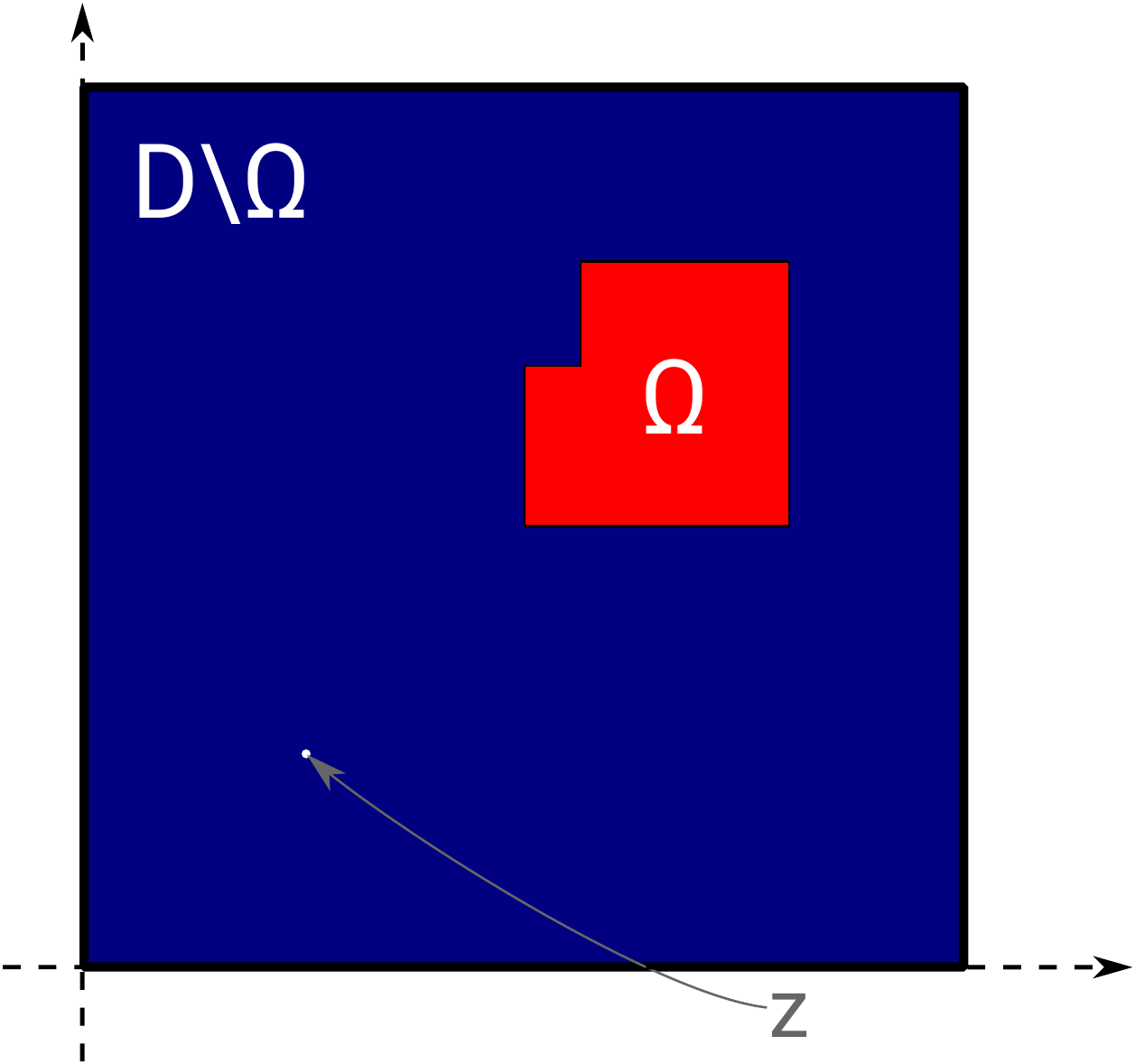}
        &&
        \includegraphics[width=.3\textwidth]{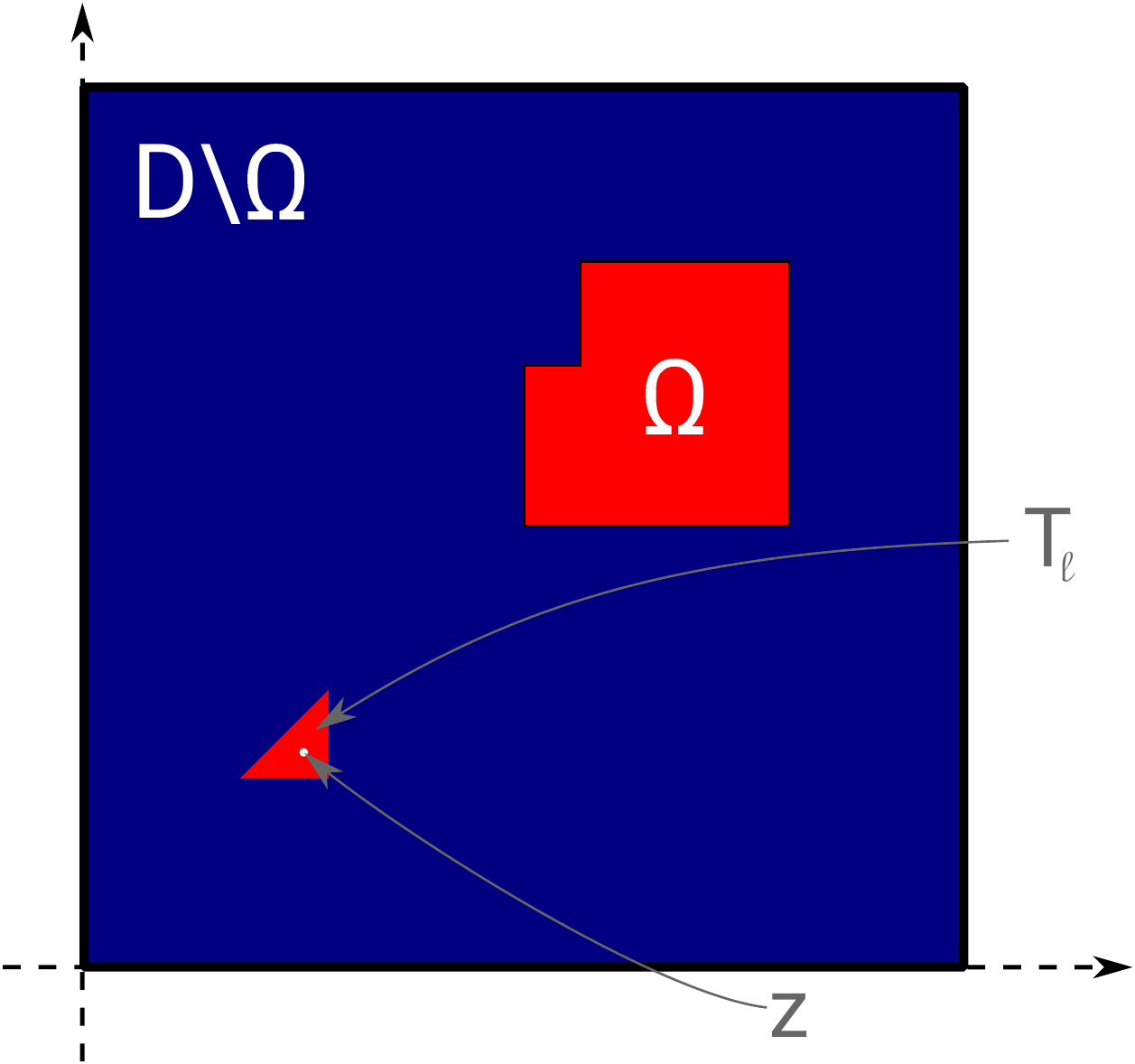} \\
        (a) && (b)
    \end{tabular}
    \end{center}
    \caption{(a) Unperturbed configuration. (b) Perturbed configuration where the domain is perturbed in triangle $T_\ell$ whose centroid is the point $z$.}
    \label{fig_unpert_pert}
\end{figure}

Let $\Omega \subset \Dsf$ be given and consider an element $T_\ell \in \mathcal T$ of type 1 (cf. Fig. \ref{fig_mesh_elTypes}) with $T_\ell \subset \Dsf \setminus \overline \Omega$ with vertices $\Bx_{\ell,1}, \Bx_{\ell,2}, \Bx_{\ell,3}$ (in counter-clockwise enumeration) and centroid $\Bz_\ell := (\Bx_{\ell,1}+\Bx_{\ell,2}+\Bx_{\ell,3})/3$, see Figure \ref{fig_unpert_pert}.
Let $\hat T^{(1)}$ denote the reference triangle defined by its three vertices $\frac{1}{3}(-2,-1)^\top$, $\frac{1}{3}(1,-1)^\top$, $\frac{1}{3}(1,2)^\top$ and $\hat T^{(2)}$ the reference triangle with vertices $\frac{1}{3}(2,1)^\top$, $\frac{1}{3}(-1,-2)^\top$, $\frac{1}{3}(-1,1)^\top$.
For the rest of this section we restrict ourselves to elements of type 1. We set $\hat T := \hat T^{(1)}$ and define $\Phi_{h,\ell} : \hat T \rightarrow T_\ell, \Bx \mapsto \Bz_\ell + h \Bx$ as the affine transformation satisfying $\Phi_{h,\ell}(\hat T) = T_\ell$. We remark that the procedure is completely analogous for elements of type 2 using reference triangle $\hat T^{(2)}$.

\subsection{Derivation of topological derivatives for triangular inclusion shapes} \label{sec_derivTDtrig}
Given a domain $\Omega$, recall the notation $\lambda_\Omega(x) = \chi_\Omega(x) \lamIn + \chi_{\Dsf \setminus \overline \Omega}(x) \lamOut$. For the fixed triangular domain perturbation $T_\ell$, let the perturbed solution $u^{(\ell)} \in V_g$ be defined as the unique solution satisfying
\begin{align} \label{eq_def_upert}
    \int_\Dsf \lambda_{\Omega \cup T_\ell}(x) \nabla u^{(\ell)} \cdot \nabla v \; \mbox dx = \int_\Dsf f v \; \mbox dx  + \int_{\Gamma_N} g_N v \; \mbox ds_x 
\end{align}
for all $v \in V_0$.
We rewrite the difference of the perturbed and unperturbed cost function by adding the equations \eqref{eq_def_upert} and \eqref{eq_probpde} defining $u^{(\ell)}$ and $u$, respectively. Using the adjoint state $p$ defined by \eqref{eq_defAdj} as test function, we obtain
\begin{align*}
    \mathcal J(\Omega \cup T_\ell) -& \mathcal J(\Omega) = J(u^{(\ell)}) - J(u) \\
    =& J(u^{(\ell)}) + \int_\Dsf \lambda_{\Omega \cup T_\ell}(x) \nabla u^{(\ell)} \cdot \nabla p \; \mbox dx - \int_\Dsf f p \; \mbox dx  - \int_{\Gamma_N} g_N p \; \mbox ds_x \\
    &- J(u) -  \int_\Dsf \lambda_{\Omega}(x) \nabla u \cdot \nabla p \; \mbox dx + \int_\Dsf f p \; \mbox dx  + \int_{\Gamma_N} g_N p \; \mbox ds_x  \\
    =& \int_\Dsf f (u^{(\ell)} - u) \; \mbox dx + \int_{\Gamma_N} g_N (u^{(\ell)} - u) \; \mbox ds_x \\
    &+ \int_\Dsf \lambda_{\Omega}(x) \nabla (u^{(\ell)}-u) \cdot \nabla p \; \mbox dx     + (\lamIn - \lamOut)  \int_{T_\ell} \nabla u^{(\ell)} \cdot \nabla p \; \mbox dx  \\
    =& (\lamIn - \lamOut)  \int_{T_\ell} \nabla (u^{(\ell)}-u) \cdot \nabla p \; \mbox dx  + (\lamIn - \lamOut)  \int_{T_\ell} \nabla u \cdot \nabla p \; \mbox dx
\end{align*}
where we used the adjoint equation \eqref{eq_defAdj} in the last step.
Making a change of variables $x \mapsto \Phi_{h,\ell}(x)$ and defining $\corr_{h,\ell, \hat T}[\lamOut, \lamIn] := \frac1h (u^{(\ell)} - u)\circ \Phi_{h,\ell}$, we get
\begin{align}
    \begin{aligned}
    \mathcal J(\Omega \cup T_\ell) = \mathcal J(\Omega) &+ h^2 (\lamIn - \lamOut)  \int_{\hat T}  \nabla  \corr_{h,\ell, \hat T}[\lamOut, \lamIn] \cdot (\nabla p)\circ \Phi_{h,\ell} \; \mbox dx \label{eq_derivTD1}\\
    &+ h^2(\lamIn - \lamOut)  \int_{\hat T} (\nabla u)\circ \Phi_{h,\ell} \cdot (\nabla p)\circ \Phi_{h,\ell} \; \mbox dx,
    \end{aligned}
\end{align}
where we used that, according to the chain rule, $(\nabla v)\circ \Phi_{h,\ell} = \frac1h \nabla (v\circ \Phi_{h,\ell})$ and $\text{det}(\partial \Phi_{h,\ell}) = h^2$.

Subtracting the unperturbed state equation \eqref{eq_probpde} from the perturbed equation \eqref{eq_def_upert}, we see that $u^{(\ell)} - u  \in V_0$ satisfies
\begin{align*}
    \int_\Dsf \lambda_{\Omega \cup T_\ell} \nabla (u^{(\ell)} - u ) \cdot \nabla v \; \mbox dx = -(\lamIn - \lamOut) \int_{T_\ell} \nabla u \cdot \nabla v \; \mbox dx
\end{align*}
for all $v \in V_0$. Making the same change of variables, this amounts to
\begin{align} \label{eq_Khell_exact}
    \int_{\Phi_{h,\ell}^{-1}(\Dsf)}\lambda_{\hat T \cup \Phi_{h,\ell}^{-1}(\Omega)} \nabla \corr_{h,\ell, \hat T}[\lamOut, \lamIn] \cdot \nabla \psi \; \mbox dx  = -(\lamIn - \lamOut) \int_{\hat T} (\nabla u)\circ \Phi_{h,\ell} \cdot \nabla \psi \; \mbox dx
\end{align}
for all $\psi \in H^1_0(\Phi_{h,\ell}^{-1}(\Dsf))$. The domain and material distribution of problem \eqref{eq_Khell_exact} is depicted in Figure \ref{fig_pert_rescaled}.
\begin{figure}
    \begin{center}
        \includegraphics[width=\textwidth]{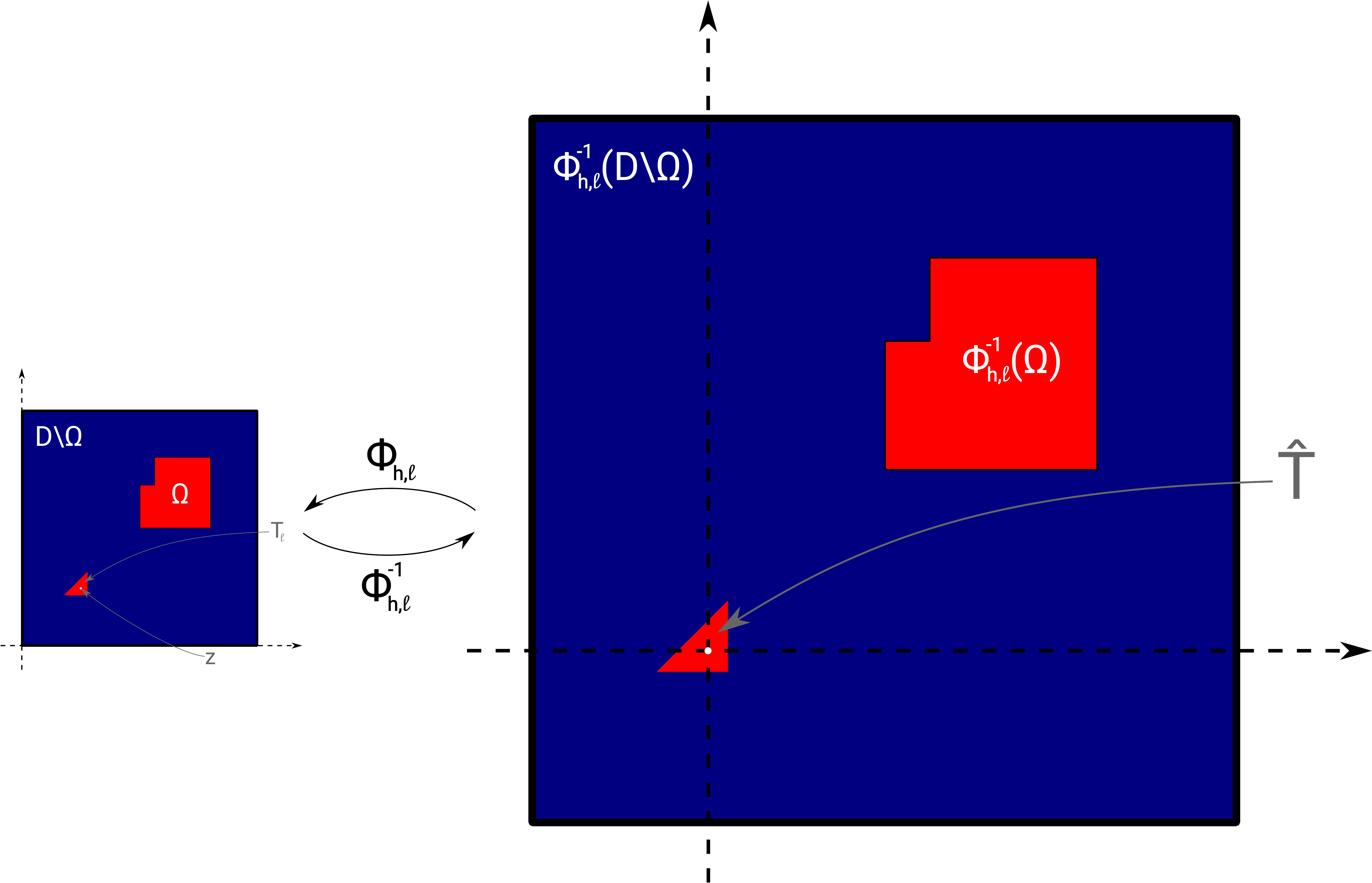}
    \end{center}
    \caption{Rescaled perturbed domain.}
    \label{fig_pert_rescaled}
\end{figure}
\begin{figure}
    \begin{center}
        \begin{tabular}{ccc}
        \includegraphics[width=.5\textwidth]{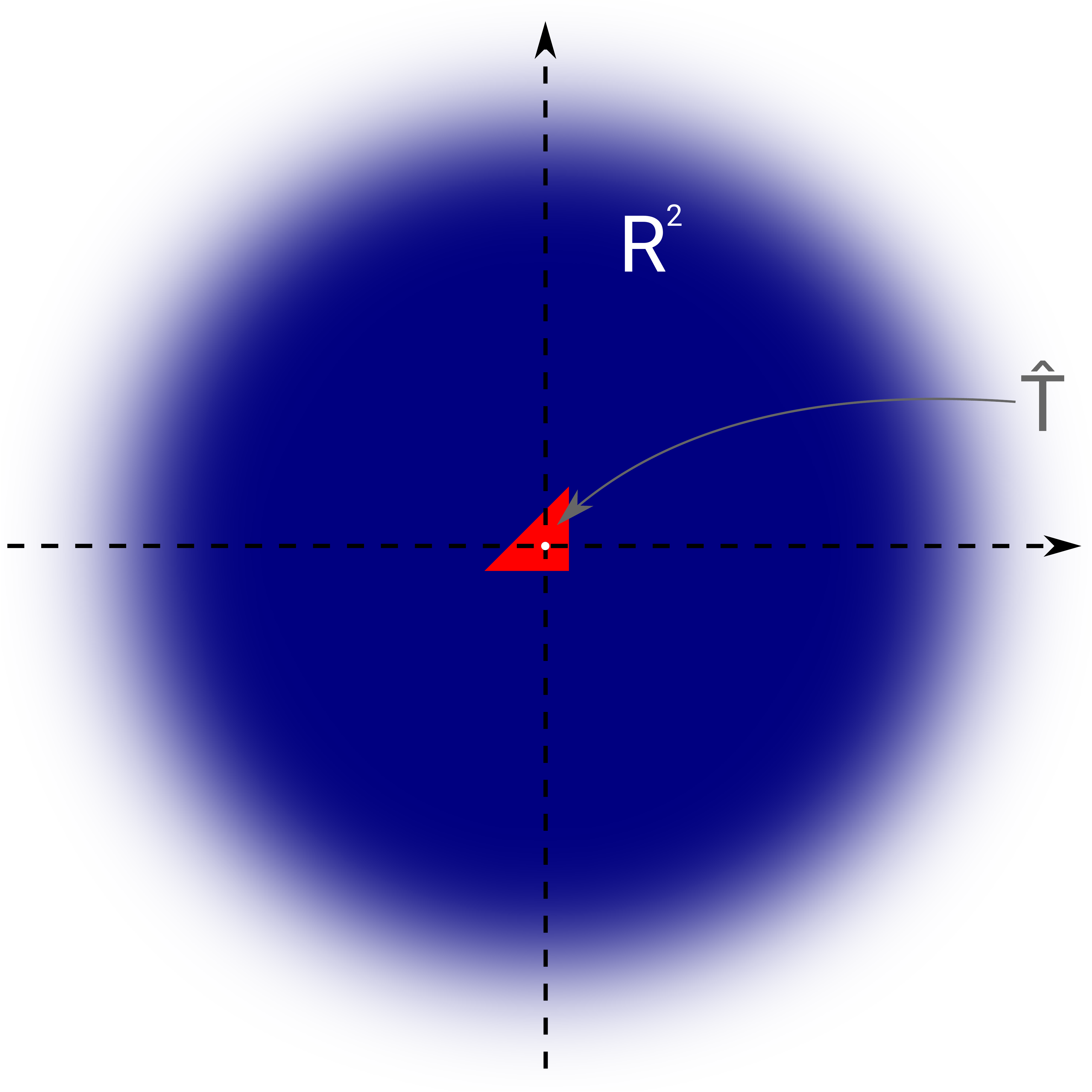} &&
        \includegraphics[width=.5\textwidth]{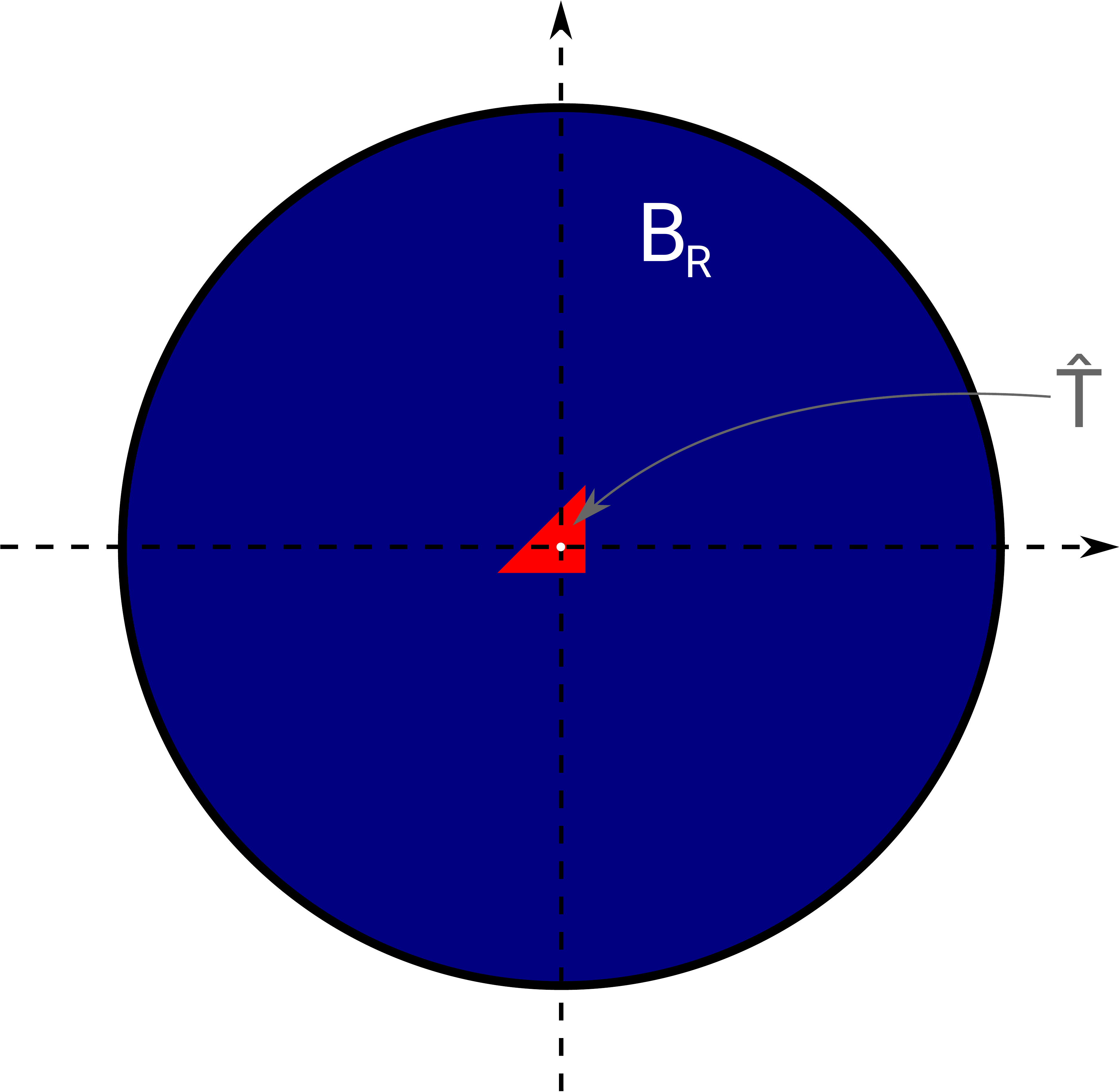}\\
        (a) && (b)
        \end{tabular}
    \end{center}
    \caption{(a) Rescaled perturbed domain after limit $h \rightarrow 0$. (b) Truncation of (a) at radius $R$.}
    \label{fig_pert_rescaled_h0__trunc}
\end{figure}

Obviously, if $\corr_{h,\ell, \hat T}[\lamOut, \lamIn]$ was known exactly then \eqref{eq_derivTD1} would give rise to an exact model for $\mathcal J(\Omega \cup T_\ell)$. However, of course, this would require the solution of a linear problem for every element $\ell$ and is therefore computationally not tractable. Instead, we now aim at obtaining an approximation of the quantity $\corr_{h,\ell, \hat T}[\lamOut, \lamIn]$ that is independent of the particular element index $\ell$. As it is often used in the derivation of topological derivatives, we consider the limit of problem \eqref{eq_Khell_exact} as $h \rightarrow 0$. This leads to the problem to find $ \corr_{\hat T}[\lamOut, \lamIn; \nabla u(z_\ell)] \in X$ satisfying \cite{a_GAST_2020a}
\begin{align} \label{eq_K_That}
    \int_{\mathbb R^2}\lambda_{\hat T } \nabla \corr_{\hat T}[\lamOut, \lamIn;\nabla u(z_\ell)] \cdot \nabla \psi \; \mbox dx = -(\lamIn - \lamOut) \int_{\hat T} \nabla u(z_\ell) \cdot \nabla \psi \; \mbox dx
\end{align}
for all test functions $\psi \in X$ where $X$ is a Beppo-Levi space, see Section \ref{sec_topder}. For an illustration of the corresponding material distribution, see Figure \ref{fig_pert_rescaled_h0__trunc}(a). Note that, if we had access to the exact solution $\corr_{\hat T}[\lamOut, \lamIn;\nabla u(z_\ell)]$ of \eqref{eq_K_That}, the topological derivative at the centroid $z_\ell$ of triangle $T_\ell \in \Dsf \setminus \overline \Omega$ with respect to $\hat T$-shaped inclusion shapes would follow from \eqref{eq_derivTD1} as
\begin{align}
    d \mathcal J[\lamOut, \lamIn](\Omega)(z_\ell, \hat T) &= \underset{h \rightarrow 0}{\mbox{lim}} \frac{\mathcal J(\Omega \cup T_\ell) - \mathcal J(\Omega) }{|T_\ell|} \nonumber \\
    &= (\lamIn - \lamOut) \frac{1}{|\hat T|} \int_{\hat T} (\nabla u(z_\ell) +  \nabla \corr_{\hat T}[\lamOut, \lamIn;\nabla u(z_\ell)](x)) \cdot \nabla p(z_\ell) \; \mbox dx \label{eq_TDtrig}
\end{align}
which coincides with the statement of Proposition \ref{prop_TDomega}. Here we used $|T_\ell| = h^2|\hat T| $.

\subsection{Our proposed topological derivative model}
Unlike in the case of circular or elliptic inclusions, no closed-form solution to the exterior problem \eqref{eq_K_That} for triangular inclusion shapes $\omega = \hat T$ is known in the literature. Thus, formula \eqref{eq_TDtrig} cannot be evaluated exactly. However, as it was shown in \cite{GanglSturm_TDauto}, it is feasible to numerically approximate the exterior problem \eqref{eq_K_That} by truncating the domain at a moderately large radius $R$ (e.g., $R=30$) and using a finite element discretization with homogeneous Dirichlet boundary conditions on the boundary of the truncated domain (see Fig. \ref{fig_pert_rescaled_h0__trunc}(b)). We remark that this truncation is justified, since it is known that the solution to \eqref{eq_K_That} exhibits a certain decay as $|x| \rightarrow \infty$ \cite{GanglSturm_TDauto}.

\subsubsection{Topological derivative model in homogeneous regions} \label{sec_TD_homo}
We restrict ourselves to elements $T_\ell$ in the interior of $\Dsf \setminus \overline \Omega$ such that also all neighboring elements of $T_\ell$ (i.e., elements that share at least one vertex with $T_\ell$) are in $\Dsf \setminus \overline \Omega$.
Of course, all results and statements follow analogously for elements $T_\ell$ in the interior of $\Omega$.
For this setting, we propose the model that is based on the following  procedure:

\begin{enumerate}
    \item Compute a finite element approximation of \eqref{eq_K_That} using a finite element discretization of a truncated domain. More precisely, given a truncation radius $R$ and a mesh $\{\tau_1, \dots, \tau_M\}$ of the truncated domain $B_R(0)$ that resolves the inclusion $\hat T$, we aim to find $\corr_{\hat T,h}[\lamOut, \lamIn;\Be^{(k)}] \in V_{h}^R:= \{v \in C^0(B_R(0)): v|_{\partial B_R(0)}=0, v|_{\tau_i} \in P^1, i=1,\dots, M\}$ such that
    \begin{align} \label{eq_K_That_FEM}
        \int_{B_R(0)}\lambda_{\hat T } \nabla \corr_{\hat T,h}[\lamOut, \lamIn;\Be^{(k)}] \cdot \nabla \psi_h \; \mbox dx = -(\lamIn - \lamOut) \int_{\hat T} \Be^{(k)} \cdot \nabla \psi_h \; \mbox dx
    \end{align}
    for all $\psi_h \in V_{h}^R$ for $k =1$ and $k=2$. Here, recall that $\lambda_{\hat T}(x) = \chi_{\hat T}(x) \lamIn + \chi_{\mathbb R^2 \setminus \hat T}(x) \lamOut$.
    \item Compute the approximate weak polarization matrix
        \begin{align} \label{eq_P_That_FEM}
            \mathcal P_{\hat T, h}[\lamOut, \lamIn] = \left[ \frac{1}{|\hat T|} \int_{\hat T}\nabla \corr_{\hat T,h}[\lamOut, \lamIn;\Be^{(1)}]\mbox dx \quad \frac{1}{|\hat T|} \int_{\hat T}  \nabla \corr_{\hat T,h}[\lamOut, \lamIn;\Be^{(2)}] \mbox dx \right] \in \mathbb R^{2\times 2}.
        \end{align}
    \item Evaluate $d \mathcal J_h[\lamOut, \lamIn](z_\ell, \hat T) = -(\lamIn - \lamOut) (\nabla u_h|_{T_\ell})^\top \left(\BI_2 + \mathcal P_{\hat T, h}[\lamOut, \lamIn] \right) \nabla u_h|_{T_\ell}$.
\end{enumerate}

With this, for a given material distribution $\Blam \in \mathbb R^m$ and $\Beta = \Blam + (\eta - \Blam_\ell) \Be^{(\ell)}$ where $T_\ell$ is in the interior of $\Dsf \setminus \overline \Omega$, we obtain the approximation
\begin{align*}
    \mathcal J(\Beta) \approx & \mathcal J(\Blam) + |T_\ell| d \mathcal J_h[\Blam_\ell,\eta](z_\ell, \hat T) \\
    =& \mathcal J(\Blam) - |T_\ell|(\eta - \Blam_\ell) (\nabla u_h|_{T_\ell})^\top \left(\BI_2 + \mathcal P_{\hat T, h}[\Blam_\ell, \eta] \right) \nabla u_h|_{T_\ell}.
\end{align*}

\begin{remark} \label{rem_oneTrig}
    Concerning the numerical solution of \eqref{eq_K_That_FEM}, we make one important remark. It is essential that the mesh $\{\tau_1, \dots, \tau_M\}$ is chosen in such a way that the triangle $\hat T$ is discretized by exactly one element $\tau_j$ and that, thus, the solution is linear inside the whole of $\hat T$. While a finer discretization of $\hat T$ would yield a better approximation to the true solution of limit problem \eqref{eq_K_That}, the term $\corr_{\hat T,h}[\lamOut, \lamIn; \Be^{(k)}]$ should actually make up for the error $u_h^{(\ell)} - u_h$ inside element $T_\ell$ which is a linear function inside $T_\ell$ due to the chosen discretization. Figure \ref{fig_comparison_K_ref} shows the solution to \eqref{eq_K_That_FEM} when $\hat T$ is resolved by exactly one element and when the whole mesh is twice uniformly refined.
\end{remark}
\begin{figure}
    \centering
    \begin{tabular}{cc}
    \includegraphics[width=.5\textwidth, trim = 50 0 50 0, clip]{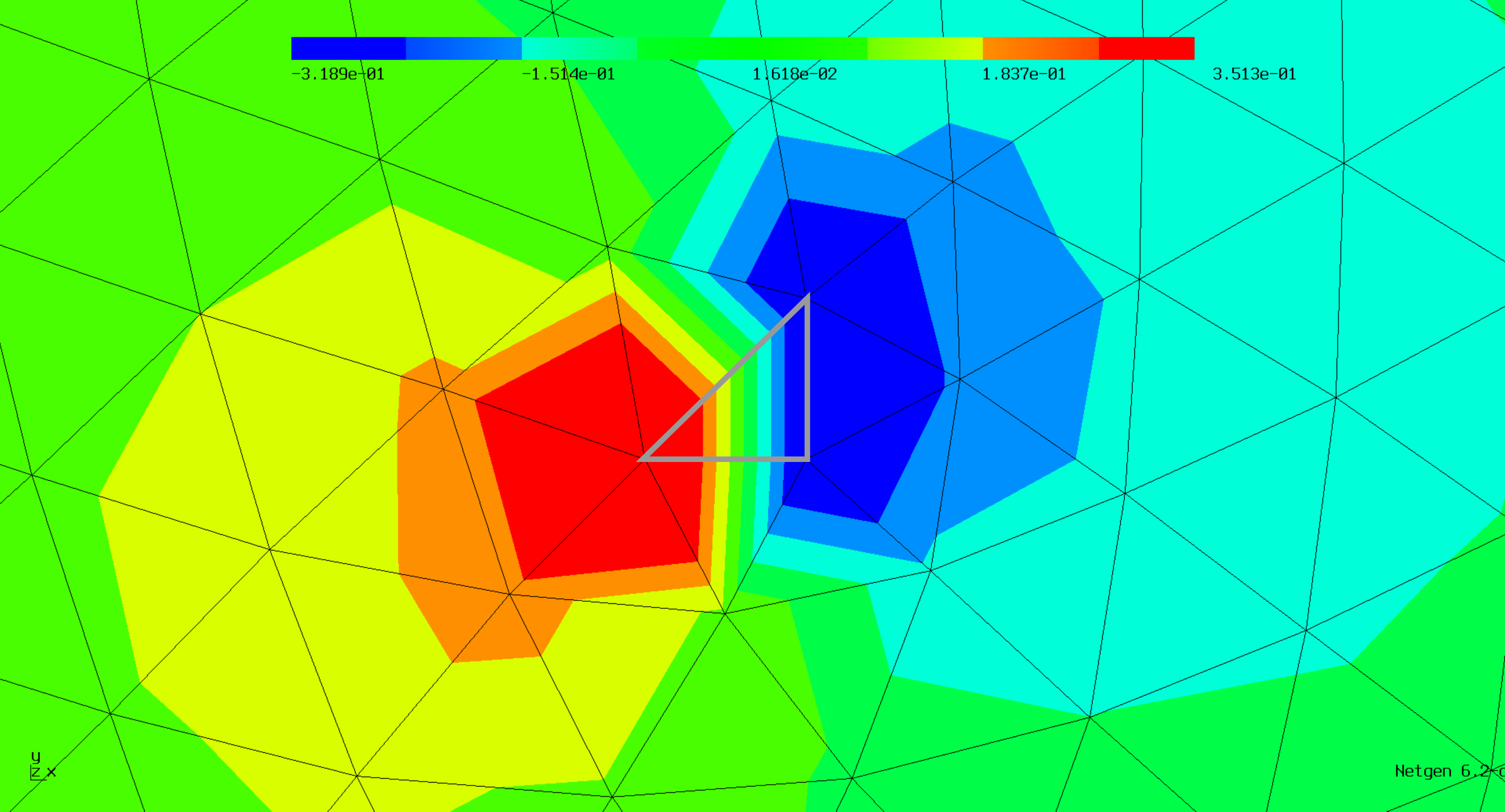} &
    \includegraphics[width=.5\textwidth, trim = 50 0 50 0, clip]{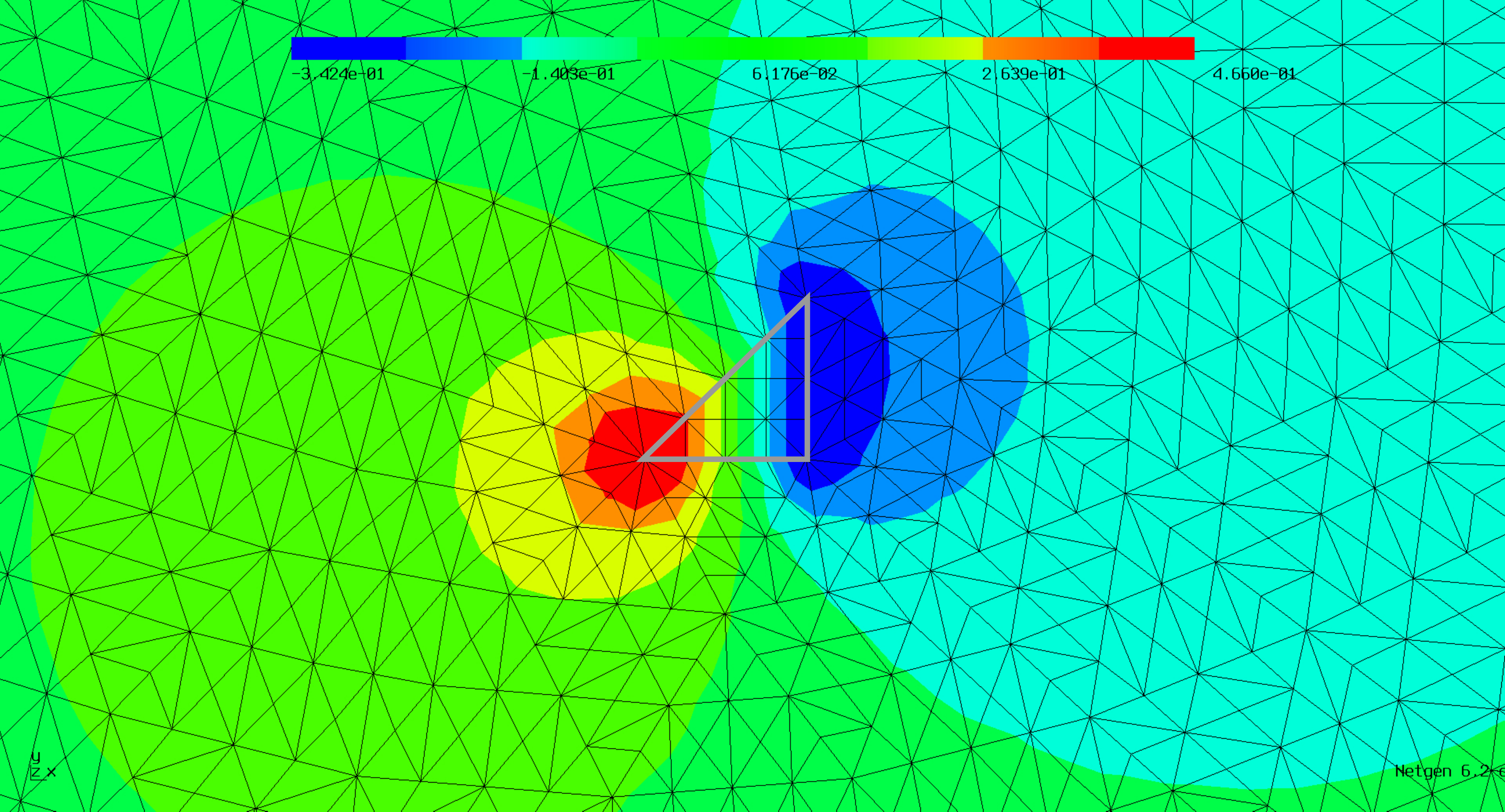} \\
    (a) & (b)
    \end{tabular}
    \caption{Comparison of numerical solution $K_{\hat T, h}[1, 1000;\Be^{(1)}]$ to \eqref{eq_K_That_FEM} on a mesh where $\hat T$ is resolved by exactly one element (a) and on a twice uniformly refined mesh. The mesh in (a) should be used for solving \eqref{eq_K_That_FEM}.} 
    \label{fig_comparison_K_ref}
\end{figure}

\subsubsection{Topological derivative model in inhomogeneous regions} \label{sec_TD_inhomo}
Even if one is interested in binary designs without intermediate materials, in the course of a density-based topology or material optimization procedure, one will of course always encounter regions of intermediate materials. For elements $T_\ell$ in these regions, the assumptions taken at the beginning of Section \ref{sec_TD_homo} are not satisfied and the corresponding proposed model will not be very accurate in these regions. In order to improve the quality of the approximation also in these regions, we recall the idea behind the topological derivative model:  The quantity $\corr_{\hat T}$ should approximate the local variation of the (discretized) state with respect to a material perturbation in some element $T_\ell$, i.e., it should approximate $u_h^{(\ell)}-u_h$. In other words, problem \eqref{eq_K_That} can be interpreted as considering $u_h^{(\ell)}-u_h$ and zooming in around the fixed triangle $T_\ell$ and neglecting everything that is beyond a certain (small) distance from that triangle, see also the illustrations in Fig. \ref{fig_pert_rescaled} and Fig. \ref{fig_pert_rescaled_h0__trunc}.

We follow this idea also in the case of inhomogeneous material around a fixed triangle, i.e., we want to approximate the local material distribution in a truncated rescaled domain $B_R(0)$ similar to Fig. \ref{fig_pert_rescaled_h0__trunc}(b). For obtaining an approximation of the inhomogeneous material distribution within the computational domain $\Dsf$, we divide the domain $B_R(0)$ into three sectors. The sectors are separated by three lines which are chosen such as to halve the three interior angles of the triangle $\hat T$. Thus, we end up with a domain $B_R(0)$ similar to the one depicted in Fig. \ref{fig_pert_rescaled_h0__trunc}(b) which is occupied by four different materials (one inside the triangle $\hat T$ and one in each of the three sectors), see Fig. \ref{fig_illustrateAvg}(b). For the computation of average values within one of the sectors, we take a weighted H\"older average of the values in the neighboring elements with parameter $\alpha$, i.e., the averaged value in Sector $j$, $j=1,2,3$, is chosen as
\begin{align} \label{eq_lambdaSj}
    \lambda^{S_j}_{T_\ell} = \left( \sum_{T \in \mathcal N(T_\ell)} w_{S_j, T} (\Blam_T)^\alpha \right)^{\frac1\alpha}.
\end{align}
Here, $\mathcal N(T_\ell)$ denotes the set of triangles that have at least one common vertex with triangle $T_\ell$, see Figure \ref{fig_illustrateAvg}(a), and $w_{S_j, T} = |T \cap S_j|/|T| \in [0,1]$ is the volume fraction of triangle $T$ in Sector $S_j$. Moreover, $\Blam_T$ denotes the entry of the vector $\Blam$ corresponding to the triangle $T$. 
In our experiments, we chose the H\"older parameter as $\alpha=-0.5$. This choice will be motivated later in Remark \ref{rem_hoelderPara} of Section \ref{sec_numExp}.

Our proposed model in the case of inhomogeneous material around an element $T_\ell$ with material coefficient $\Blam_\ell$ and averaged sector values $\lambda^{S_1}_{T_\ell}, \lambda^{S_2}_{T_\ell},\lambda^{S_3}_{T_\ell}$ according to \eqref{eq_lambdaSj} follows the same three steps outlined for the homogeneous setting in Section \ref{sec_TD_homo}: We numerically compute the corresponding correctors $\corr_{\hat T, h}[(\lamOut, \lambda^{S_1}_{T_\ell}, \lambda^{S_2}_{T_\ell},\lambda^{S_3}_{T_\ell}), \lamIn; \Be^{(k)}]$ for $k=1,2$ as the finite element solutions to \eqref{eq_K_That_FEM} where $\lambda_{\hat T}$ is replaced by the new three-sector material distribution
\begin{align} \label{eq_lambdaThat_secs}
    \lambda_{\hat T}(x) = \chi_{\hat T}(x) \lamIn + \sum_{j=1}^3\chi_{S_j}(x) \lambda^{S_j}_{T_\ell}.
\end{align} Subsequently, the corresponding weak polarization matrix $\mathcal P_{\hat T, h}[(\lamOut, \lambda^{S_1}_{T_\ell}, \lambda^{S_2}_{T_\ell},\lambda^{S_3}_{T_\ell}), \lamIn]$ and the quantity $d\mathcal J_h[(\lamOut, \lambda^{S_1}_{T_\ell}, \lambda^{S_2}_{T_\ell},\lambda^{S_3}_{T_\ell}), \lamIn](z_\ell, \hat T)$ can be computed according to steps 2 and 3 of Section \ref{sec_TD_homo}.

\begin{figure}
    \begin{tabular}{cc}
        \includegraphics[width=.5\textwidth]{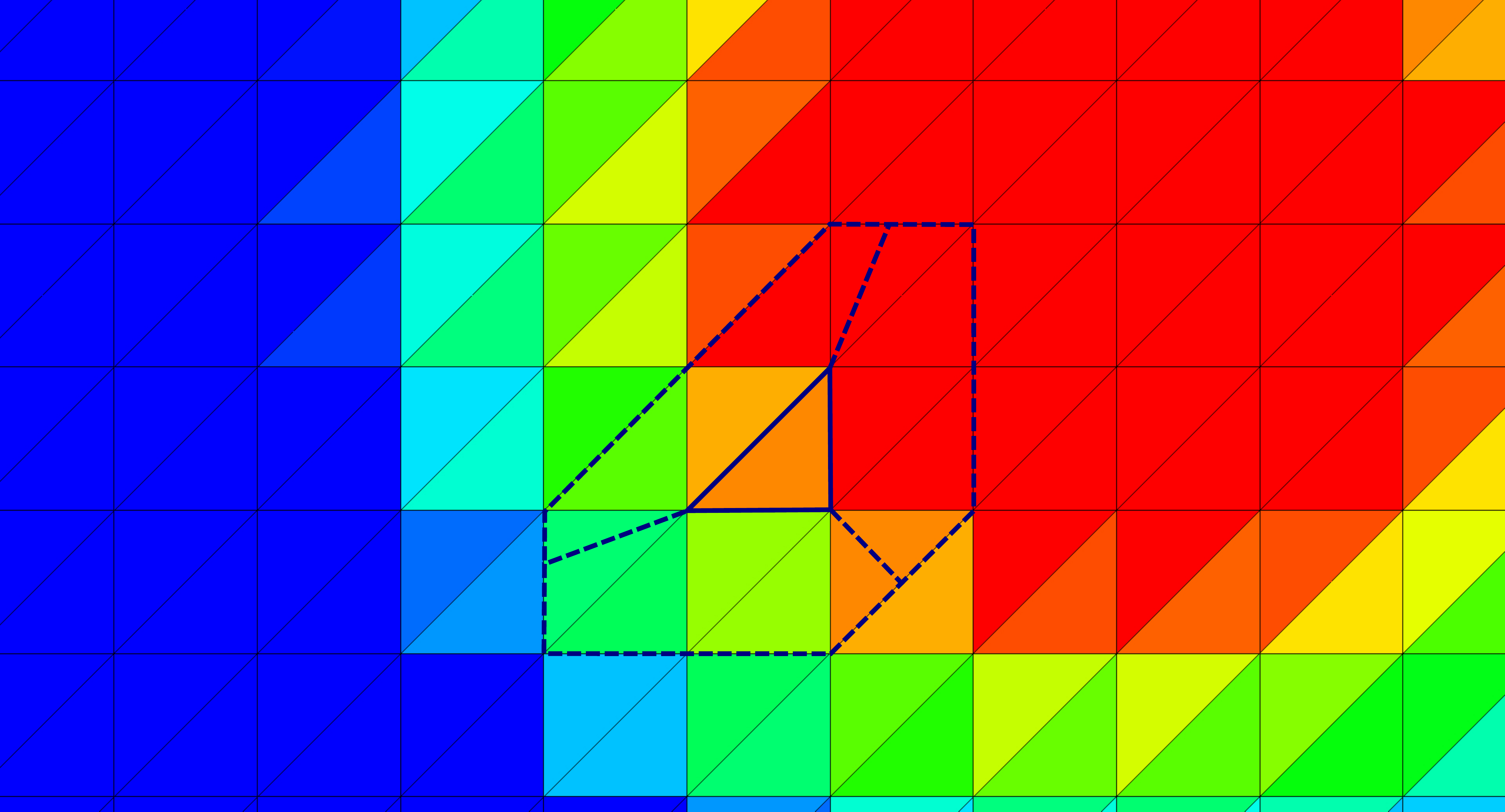}&
        \includegraphics[width=.5\textwidth]{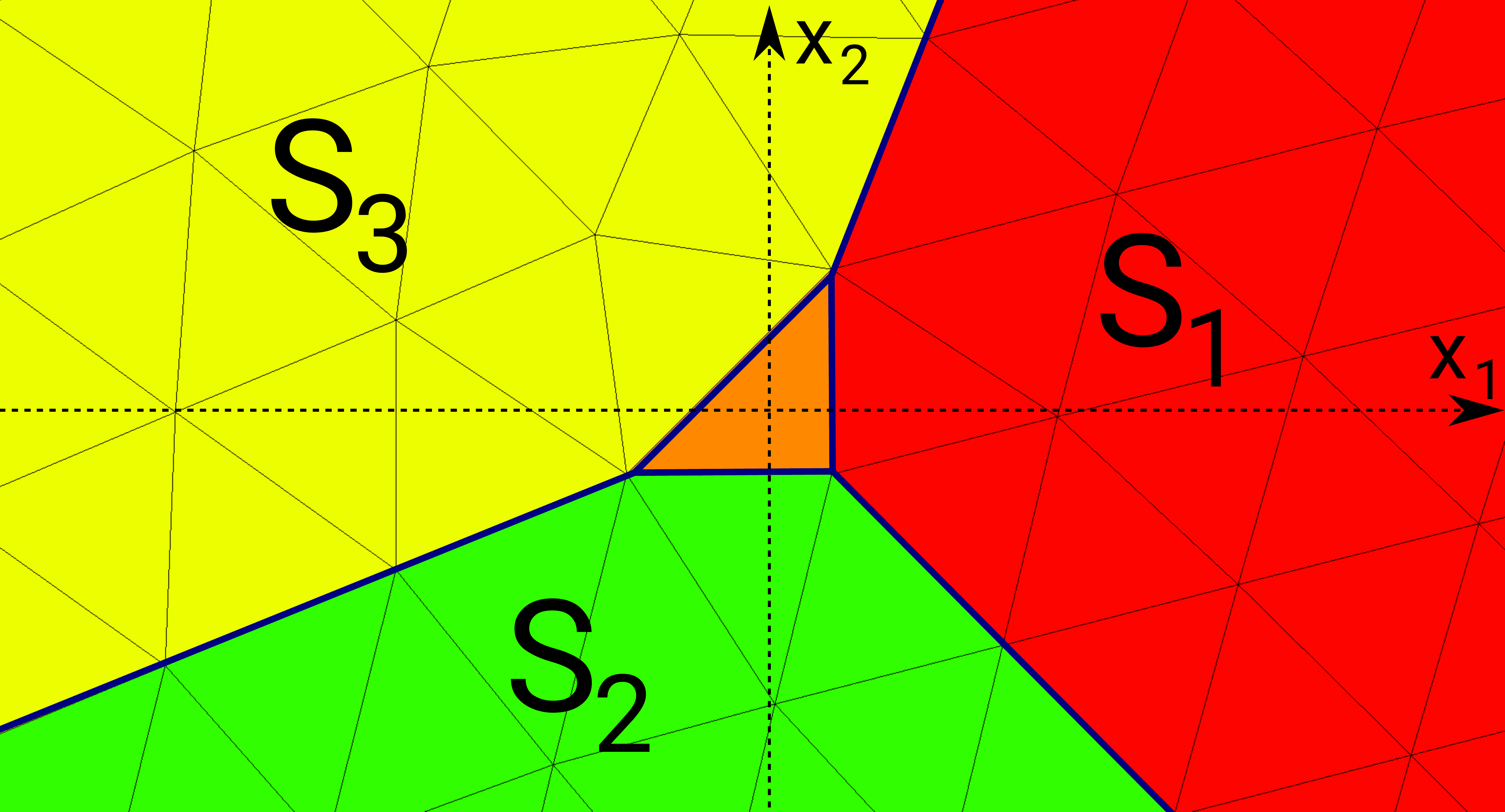} \\
        (a) & (b)
    \end{tabular}
    \caption{(a) Element inside inhomogeneous material distribution in computational domain $\Dsf$. Neighborhood for averaging into sector values is marked. (b) Averaged material distribution in three sectors of truncated unbounded domain $B_R(0)$. The average values per sector are obtained by a weighted H\"older mean of the material values in the neighboring elements.}
    \label{fig_illustrateAvg}
\end{figure}

Summarizing, we define the model
\begin{align} \label{eq_defTDnum_model}
    \hat{\mathcal J}_{\text{TDnum}}(\Beta) := \mathcal J(\Blam) - \sum_{\ell = 1}^m |T_\ell|(\Beta_\ell - \Blam_\ell) (\nabla u_h|_{T_\ell})^\top \left(\BI_2 + \mathcal P_{\hat T, h}[(\Blam_\ell, \lambda^{S_1}_{T_\ell}, \lambda^{S_2}_{T_\ell},\lambda^{S_3}_{T_\ell}), \Beta_\ell] \right) \nabla u_h|_{T_\ell}.
\end{align}
Note that the homogeneous setting of Section \ref{sec_TD_homo} is actually also covered by the more general inhomogeneous setting treated here.

We next make an important remark concerning the efficient evaluation of the model $\hat{\mathcal J}_{\text{TDnum}}$.
\begin{remark} \label{rem_precomputation}
    In the general inhomogeneous setting, the procedure described in this section states that, in order to evaluate model \eqref{eq_defTDnum_model}, a problem of type \eqref{eq_K_That_FEM} has to be solved for each element $T_\ell$. Of course, this is computationally expensive and therefore not recommended by the authors. Instead, the approach followed here is to divide the numerical computations into an offline and an online phase. In the offline phase, which has to be performed only once for the particular type of PDE operator, we compute the quantities
    \begin{align*}
        \corr_{\hat T, h}[(\lamOut, \lambda^{S_1}, \lambda^{S_2},\lambda^{S_3}), \lamIn; \Be^{(k)}]
    \end{align*}
    for $k=1,2$ and for a large number of combinations of relevant values $(\lamOut, \lambda^{S_1}_{T_\ell}, \lambda^{S_2}_{T_\ell},\lambda^{S_3}_{T_\ell}, \lamIn)$ and store the corresponding weak polarization matrices. This, initially, results in a five-dimensional array of $2\times2$ matrices. Moreover, these precomputations should be done for each type of reference triangle, i.e., in our case for $\hat T = \hat T^{(1)}$ and $\hat T = \hat T^{(2)}$, see Fig. \ref{fig_mesh_elTypes}.

    In the online phase, for each element $T_\ell$ the corresponding average sector values are computed according to \eqref{eq_lambdaSj} and the weak polarization matrix $\mathcal P_{\hat T, h}[(\Blam_\ell, \lambda^{S_1}_{T_\ell}, \lambda^{S_2}_{T_\ell},\lambda^{S_3}_{T_\ell}), \Beta_\ell] $ is approximately obtained by piecewise linear interpolation of the precomputed values.
\end{remark}

We finally remark that the precomputation can be reduced from five to four dimensions by exploiting that problem \eqref{eq_K_That_FEM} with $\lambda_{\hat T}$ according to \eqref{eq_lambdaThat_secs} depends on the parameter $\lamOut$ only via the scaling of the right hand side by $(\lamIn - \lamOut)$.

\section{An improved separable model based on the Sherman-Morrison-Woodbury formula} \label{sec_SMW_W}

In this section, we revisit the separable exact model defined in \eqref{eq_SMWexact} and have a closer look at the matrix
\begin{align*}
    \BGam^{(\ell)} = -\Bell^\top \BK(\Blam)^{-1} \Bell \in \mathbb R^{2\times 2}.
\end{align*}
Recall that, in order to employ model \eqref{eq_SMWexact}, this matrix would have to be evaluated for each element index $\ell$, which amounts to solving $m$ many systems of linear equations and is thus computationally prohibitive. Motivated by the procedure of Section \ref{sec_modelTD}, our goal here is to find a good approximation of $\BGam^{(\ell)}$ that is independent of the element index $\ell$ and can thus be precomputed in an offline phase.

We begin by making the following observation. We assume the finite element setting introduced in Section \ref{sec_discmodelprob} with the mesh $\mathcal T$ and the finite element space $V_h\subset H^1_{\Gamma_D}(\Dsf)$ of piecewise linear and globally continuous functions.
\begin{lemma} \label{lem_Gamma_w}
    Let $T_\ell \in \mathcal T$ and, for $k=1,2$, define $w_{k,h} \in V_h$ the unique numerical solution to the variational problem
    \begin{align} \label{eq_Uek}
        \int_D \lambda(x) \nabla w_{k,h} \cdot\nabla v_h \; \mbox dx = - \int_{T_\ell} \Be^{(k)} \cdot \nabla v_h \; \mbox dx
    \end{align}
    for all $v_h \in V_h$. Then it holds
    \begin{align}
        \BGam^{(\ell)} = [ \nabla w_{1,h}|_{T_\ell}\; \nabla w_{2,h}|_{T_\ell} ] .
    \end{align}
\end{lemma}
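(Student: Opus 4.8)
The plan is to translate the matrix-algebraic definition $\BGam^{(\ell)} = -\Bell^\top \BK(\Blam)^{-1} \Bell$ into the finite element language and identify the columns of $\BK(\Blam)^{-1}\Bell$ with the coefficient vectors of $w_{1,h}$ and $w_{2,h}$. First I would write $\Bell = \Bellti \BD_\ell \in \mathbb R^{n\times d}$ as in Section \ref{sec_discmodelprob} and consider, for $k=1,2$, the coefficient vector $\Bw^{(k)} := -\BK(\Blam)^{-1} \Bell \Be^{(k)} \in \mathbb R^n$, so that $-\BK(\Blam)^{-1}\Bell = [\Bw^{(1)} \; \Bw^{(2)}]$. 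By construction $\BK(\Blam)\Bw^{(k)} = -\Bell \Be^{(k)}$, and I would check that this linear system is exactly the matrix form of the variational problem \eqref{eq_Uek}: indeed, testing \eqref{eq_Uek} against the nodal basis functions $\varphi_i$ gives $(\BK(\Blam)\Bw^{(k)})_i = -\int_{T_\ell}\Be^{(k)}\cdot\nabla\varphi_i\,\mathrm dx$, and the right-hand side vector is precisely $-\Bell\Be^{(k)}$ since $\Bell^\top\Be^{(i)}$-type bookkeeping (via $\Bellti$ and $\BD_\ell$) reproduces $\int_{T_\ell}\Be^{(k)}\cdot\nabla\varphi_{\ell,i}\,\mathrm dx$ using $K^{(\ell)}_{loc} = \BD_\ell\BD_\ell^\top$ and the fact that $\nabla\varphi_{\ell,i}$ on $T_\ell$ is encoded by the rows of $\BD_\ell$ up to the $\sqrt{|T_\ell|}$ normalization. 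Hence $\Bw^{(k)}$ is the coefficient vector of $w_{k,h}$.

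Next I would compute $\BGam^{(\ell)} = \Bell^\top [\Bw^{(1)} \; \Bw^{(2)}]$ columnwise: its $k$-th column is $\Bell^\top \Bw^{(k)}$. By Lemma \ref{lem_Bltu} applied to the finite element function $w_{k,h}$ with coefficient vector $\Bw^{(k)}$, we have $\Bell^\top \Bw^{(k)} = \sqrt{|T_\ell|}\,\nabla w_{k,h}|_{T_\ell}$. This would seemingly produce a factor $\sqrt{|T_\ell|}$ too many, so the resolution must come from a matching $1/\sqrt{|T_\ell|}$ hidden in the definition of $w_{k,h}$ through the right-hand side of \eqref{eq_Uek}; I would track the normalization carefully and confirm that the $\Be^{(k)}$ on the right-hand side of \eqref{eq_Uek} (as opposed to a $\sqrt{|T_\ell|}$-scaled version) is exactly what makes the two factors cancel, leaving $\BGam^{(\ell)} = [\nabla w_{1,h}|_{T_\ell}\; \nabla w_{2,h}|_{T_\ell}]$. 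Equivalently, I would avoid the bookkeeping pitfall by observing directly that $\Bell^\top\Bw^{(k)} = \BD_\ell^\top\Bellti^\top\Bw^{(k)} = \BD_\ell^\top(\text{local dofs of }w_{k,h})$, which by the same computation as in the proof of Lemma \ref{lem_Bltu} equals $\sqrt{|T_\ell|}\,\nabla w_{k,h}|_{T_\ell}$, and then reconcile with the scaling built into \eqref{eq_Uek}.

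The main obstacle I anticipate is precisely this normalization/scaling reconciliation: making sure the $\sqrt{|T_\ell|}$ factors from Lemma \ref{lem_Bltu} and from the definition of $\Bell = \Bellti\BD_\ell$ are accounted for consistently on both sides, and in particular verifying that the right-hand side of the variational problem \eqref{eq_Uek} is stated with the "bare" $\Be^{(k)}$ rather than a rescaled version, so that the final identity has no spurious $|T_\ell|$-dependent prefactor. Once the correspondence $\Bw^{(k)} \leftrightarrow w_{k,h}$ and the correct scaling are pinned down, assembling the two columns gives the claim immediately. The argument is essentially a careful unwinding of definitions plus one application of Lemma \ref{lem_Bltu}, with no deeper analytic input required.
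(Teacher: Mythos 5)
Your proof follows the same route as the paper's: discretize \eqref{eq_Uek}, express the resulting load vector in terms of $\Bell$, and apply Lemma \ref{lem_Bltu} to $w_{k,h}$. The scaling question you flagged resolves exactly as you anticipated, but your intermediate claim needs the correction that the discrete load vector is $\Bf^{(\ell,k)} = -\sqrt{|T_\ell|}\,\Bell\Be^{(k)}$ rather than $-\Bell\Be^{(k)}$ (indeed $(\Bf^{(\ell,k)})_i = -\int_{T_\ell}\Be^{(k)}\cdot\nabla\varphi_i\,\mathrm{d}x$ carries a factor $|T_\ell|$, while the local block of $\Bell=\Bellti\BD_\ell$ only carries $\sqrt{|T_\ell|}$), so the coefficient vector of $w_{k,h}$ is $-\sqrt{|T_\ell|}\,\BK(\Blam)^{-1}\Bell\Be^{(k)}$ and this extra $\sqrt{|T_\ell|}$ cancels the one produced by Lemma \ref{lem_Bltu}, leaving $\nabla w_{k,h}|_{T_\ell} = -\Bell^\top\BK(\Blam)^{-1}\Bell\Be^{(k)} = \BGam^{(\ell)}\Be^{(k)}$ with no spurious prefactor, exactly as in the paper.
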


\begin{proof}
    We use the notation and symbols introduced in Section \ref{sec_discmodelprob}. The discretization of \eqref{eq_Uek} reads
    \begin{align*}
        \BK(\Blam) \Bw^{(k)} = \Bf^{(\ell,k)}
    \end{align*}
    where $\BK(\Blam)$ is the invertible stiffness matrix, $\Bw^{(k)}$ denotes the coefficient vector of the finite element function $w_{k,h} = \sum_{i=1}^n \Bw^{(k)}_i \varphi_i$, $k\in \{1,2\}$, and $\Bf^{(\ell,k)} \in \mathbb R^n$ with
    \begin{align*}
       (\Bf^{(\ell,k)})_i =& -\int_{T_\ell} \Be^{(k)} \cdot \nabla \varphi_i \; \mbox dx, \; i=1,\dots n.
    \end{align*}
    Since the global load vector $\Bf^{(\ell, k)}$ has contributions only from one element, it holds $\Bf^{(\ell, k)} = \Bellti \Bf^{\ell, k}_{\text{loc}}$ with the element load vector
    \begin{align*}
         \Bf^{\ell, k}_{\text{loc}} = - |T_\ell| \begin{pmatrix} -1 & -1 \\ 1 & 0 \\ 0 & 1 \end{pmatrix} \BJ_\ell^{-1} \Be^{(k)} = - \sqrt{|T_\ell|} \BD_\ell \Be^{(k)}.
    \end{align*}
    Thus, it holds
    \begin{align} \label{eq_proof_wk_1}
        \Bell^\top \Bw^{(k)} = \Bell^\top \BK(\Blam)^{-1} \Bf^{(\ell,k)} = \Bell^\top \BK(\Blam)^{-1} \Bellti \Bf^{\ell, k}_{\text{loc}} = - \sqrt{|T_\ell|} \Bell^\top \BK(\Blam)^{-1} \Bell \Be^{(k)}.
    \end{align}
    On the other hand, we know from \eqref{eq_Bltu} that
    \begin{align} \label{eq_proof_wk_2}
        \Bell^\top \Bw^{(k)} = \sqrt{|T_\ell|} \, \nabla w_{k,h}|_{T_\ell}.
    \end{align}
    Comparing \eqref{eq_proof_wk_1} and \eqref{eq_proof_wk_2} for $k=1$ and $k=2$ yields the result.
\end{proof}

Lemma \ref{lem_Gamma_w} gives an interpretation of the matrix $\BGam^{(\ell)}$, which appears in the Sherman-Morrison-Woodbury model \eqref{eq_SMWexact} and is costly to evaluate, in terms of a boundary value problem. In order to find an approximation of $\BGam^{(\ell)}$ that is independent of the element index $\ell$, we proceed similarly to Section \ref{sec_modelTD}. In boundary value problem \eqref{eq_Uek}, we zoom in around the element $T_\ell$, i.e., we apply the transformation $\Phi_{h,\ell}^{-1}$ that transforms $T_\ell$ to the reference element $\hat T$, see Fig. \ref{fig_smwapprox_pert_rescaled} for an illustration in a homogeneous setting. Note that, as opposed to the procedure in Section \ref{sec_modelTD}, here an unperturbed material distribution is transformed.
\begin{figure}
    \begin{center}
        \includegraphics[width=\textwidth]{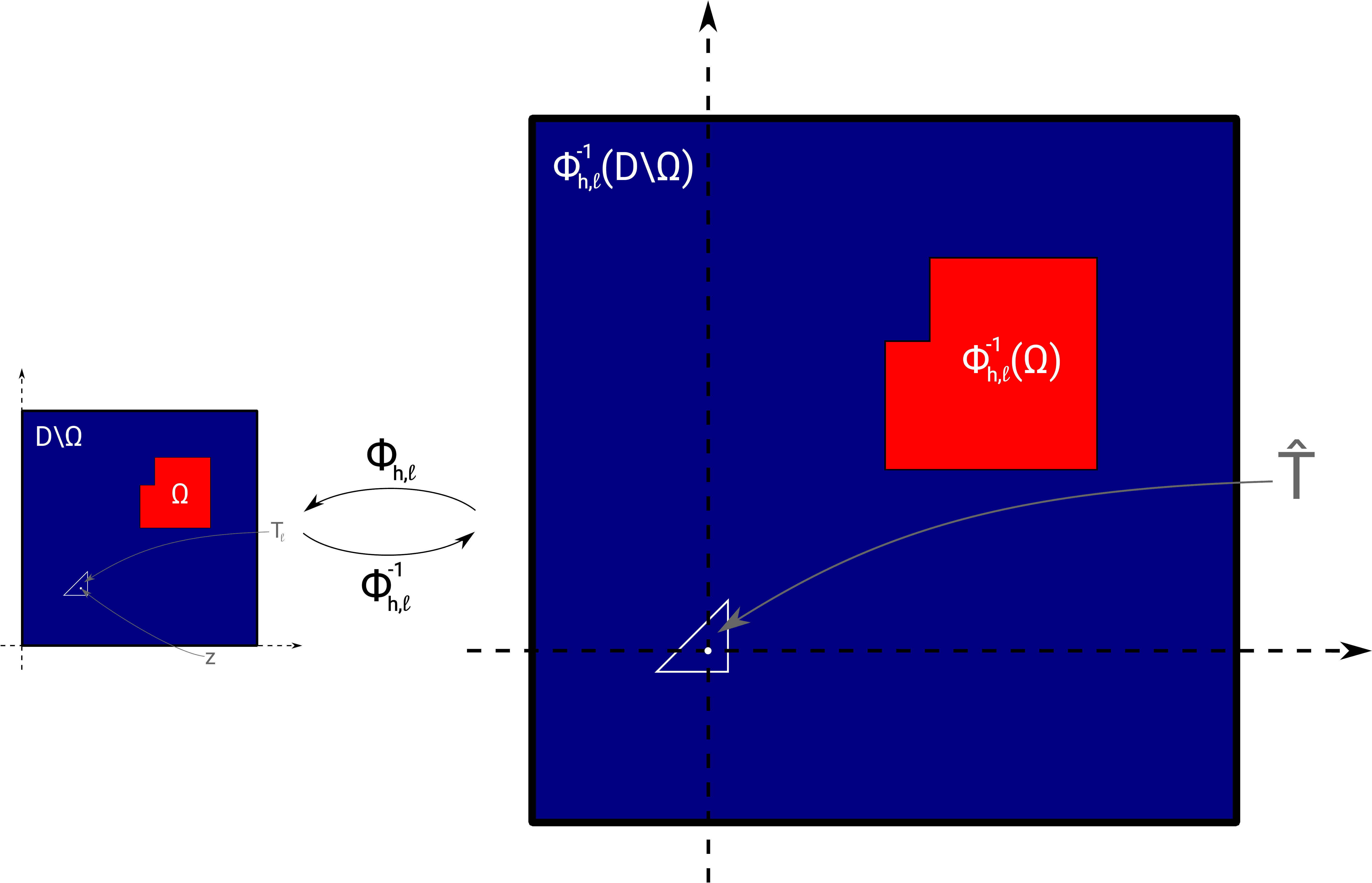}
    \end{center}
    \caption{Rescaled unperturbed domain.}
    \label{fig_smwapprox_pert_rescaled}
\end{figure}

Passing to the limit $h \rightarrow 0$ yields an exterior problem on the unbounded domain, see Fig. \ref{fig_smwapprox_pert_rescaled_h0__trunc}(a) and again truncating this domain leads to the boundary value problem on the truncated domain $B_R(0)$ to find
$W_{\hat T}[\lamOut;\Be^{(k)}] \in H^1_0(B_R(0))$, $k=1,2$, as the unique solution to
\begin{align} \label{eq_W_That_homo}
    \int_{B_R(0)} \lamOut \nabla W_{\hat T}[\lamOut;\Be^{(k)}] \cdot \nabla \psi \; \mbox dx = - \int_{\hat T} \Be^{(k)} \cdot \nabla \psi \; \mbox dx
\end{align}
for all $\psi \in H^1_0(B_R(0))$, see Fig. \ref{fig_smwapprox_pert_rescaled_h0__trunc}(b). Note that this problem differs from problem \eqref{eq_K_That_FEM} only by a different scaling factor on the right hand side and by a homogeneous material distribution $\lamOut$.

In the case when $T_\ell$ is in an inhomogeneous region of the computational domain $\Dsf$ (i.e., not all neighbors of $T_\ell$ have the same material coefficient), we can follow the same averaging procedure with three sectors as in Section \ref{sec_TD_inhomo} and obtain the problem to find $W_{\hat T}[(\Blam_\ell, \lambda_{T_\ell}^{S_1}, \lambda_{T_\ell}^{S_2}, \lambda_{T_\ell}^{S_3});\Be^{(k)}] \in H^1_0(B_R(0))$, $k=1,2$, as the unique solution to
\begin{align} \label{eq_W_That_inhomo}
    \int_{B_R(0)} \lambda_{\hat T} \nabla W_{\hat T}[(\Blam_\ell, \lambda_{T_\ell}^{S_1}, \lambda_{T_\ell}^{S_2}, \lambda_{T_\ell}^{S_3});\Be^{(k)}] \cdot \nabla \psi \; \mbox dx = - \int_{\hat T} \Be^{(k)} \cdot \nabla \psi \; \mbox dx
\end{align}
where $\lambda_{\hat T}(x) = \chi_{\hat T}(x) \Blam_\ell + \sum_{j=1}^3 \chi_{S_j}(x) \lambda_{T_\ell}^{S_j}$, cf. also the material distribution in Fig. \ref{fig_illustrateAvg}. Remark \ref{rem_oneTrig} concerning the numerical approximation of \eqref{eq_W_That_homo} and \eqref{eq_W_That_inhomo} with a mesh where the subdomain $\hat T$ of $B_R(0)$ is discretized by exactly one element remains valid.
We define the $2\times 2$ matrix
\begin{align} \label{eq_Gamma_approx}
\begin{aligned}
    \BGam_{\hat T,\ell} :=& \BGam_{\hat T}[(\Blam_\ell, \lambda_{T_\ell}^{S_1}, \lambda_{T_\ell}^{S_2}, \lambda_{T_\ell}^{S_3})] \\
    :=& \left[ \frac{1}{\hat T} \int_{\hat T}\nabla W_{\hat T}[(\Blam_\ell, \lambda_{T_\ell}^{S_1}, \lambda_{T_\ell}^{S_2}, \lambda_{T_\ell}^{S_3});\Be^{(1)}] \mbox dx \quad  \frac{1}{\hat T} \int_{\hat T}\nabla W_{\hat T}[(\Blam_\ell, \lambda_{T_\ell}^{S_1}, \lambda_{T_\ell}^{S_2}, \lambda_{T_\ell}^{S_3});\Be^{(2)}]  \mbox dx\right],
\end{aligned}
\end{align}
and remark that, in the same way as pointed out in Remark \ref{rem_precomputation}, we can also precompute the matrices $\BGam_{\hat T,\ell}$ for a four-dimensional array of values in an offline phase and interpolate them efficiently in the online phase.
This way, we get the separable model
\begin{align} \label{eq_SMW_W}
    \hat{\mathcal J}_{\text{SMWapprox}}(\Beta) := \mathcal J(\Blam) -\sum_{\ell=1}^m   |T_\ell|(\Beta_\ell -  \Blam_{\ell}) (\nabla u_h|_{T_\ell})^\top \left( \BI_2 - (\Beta_\ell -  \Blam_{\ell}) \BGam_{\hat T, \ell}\right)^{-1} \nabla u_h|_{T_\ell}
\end{align}
as an approximation to the separable exact model \eqref{eq_SMWexact}.

\begin{figure}
    \begin{center}
        \begin{tabular}{ccc}
        \includegraphics[width=.5\textwidth]{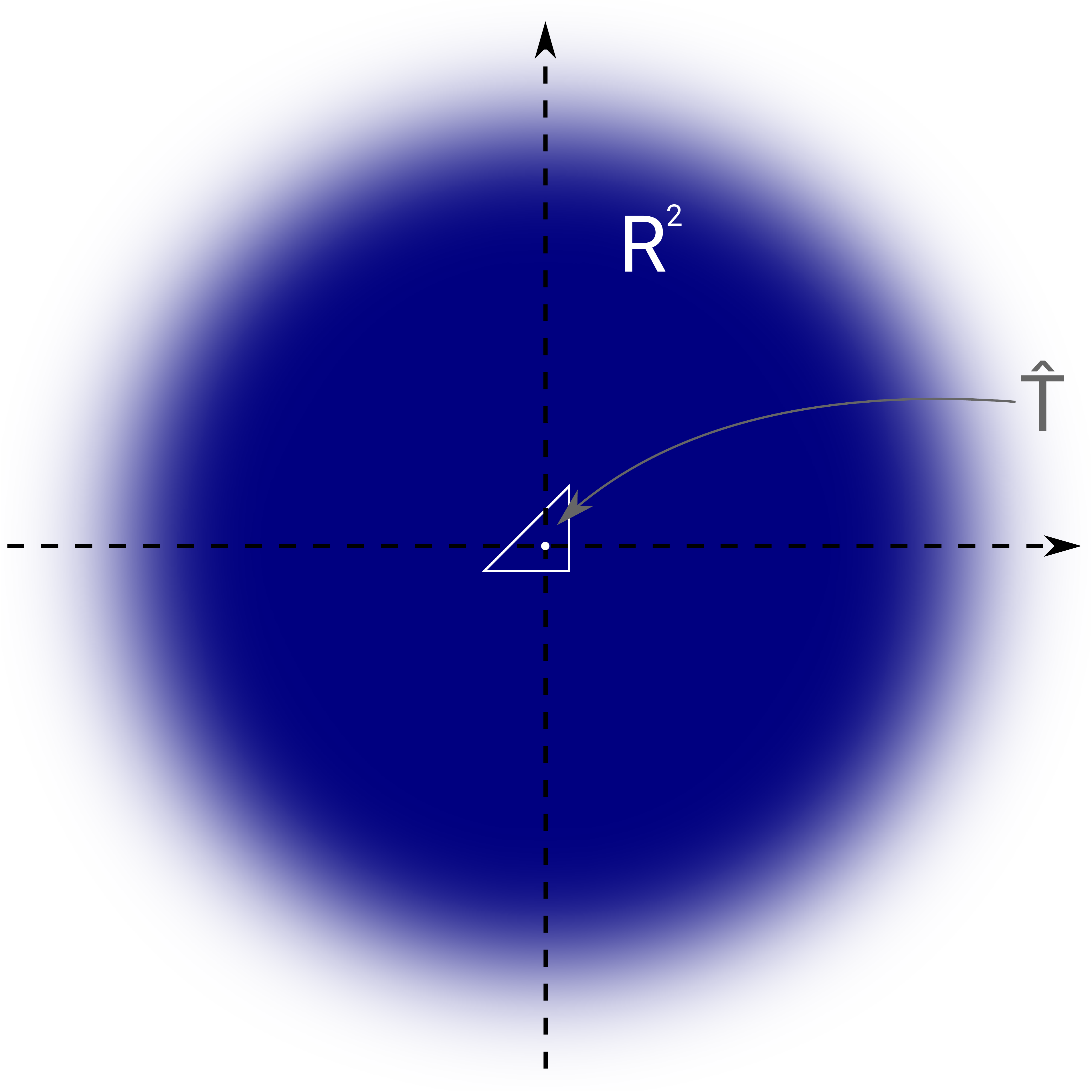} &&
        \includegraphics[width=.5\textwidth]{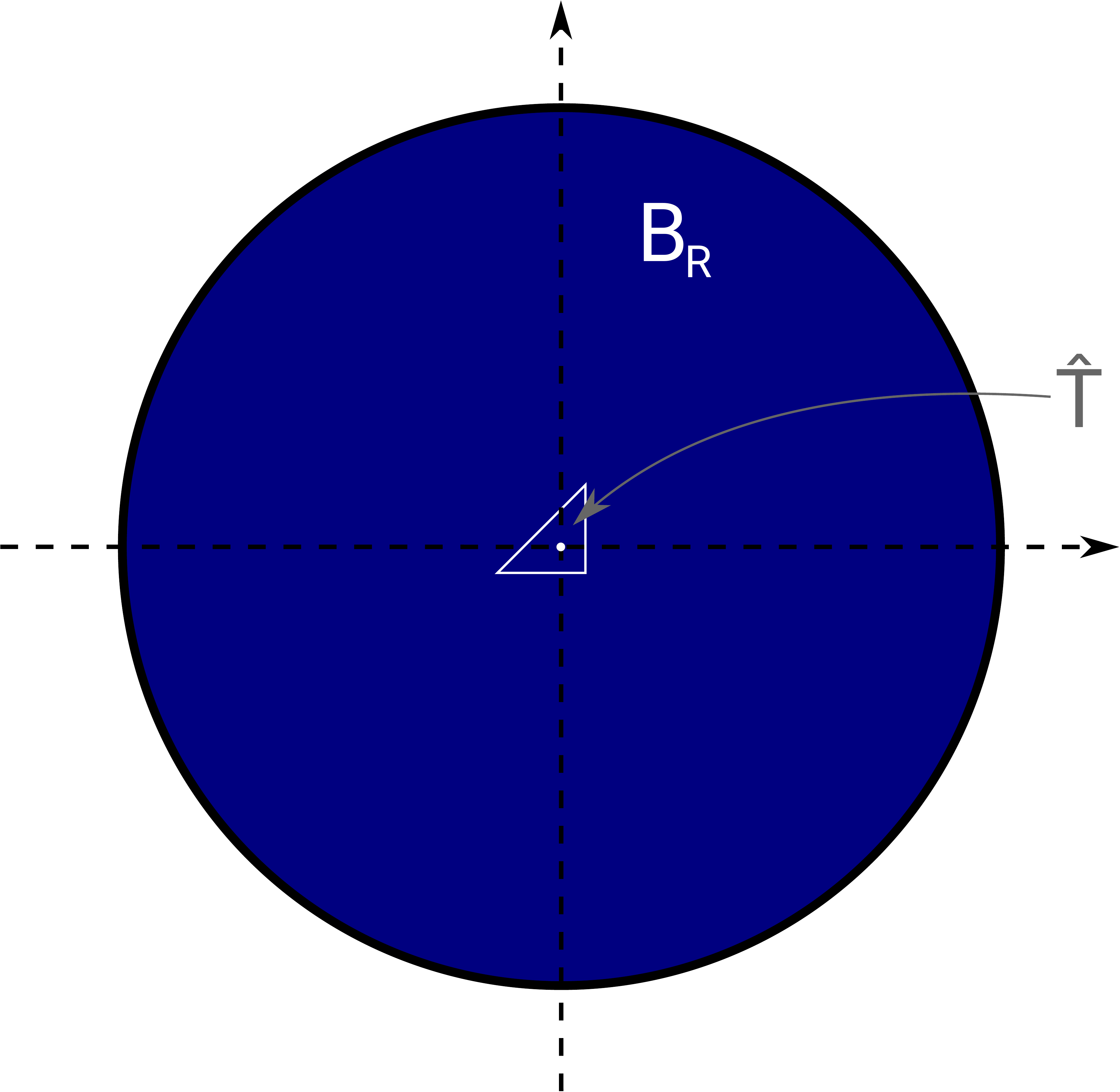}\\
        (a) && (b)
        \end{tabular}
    \end{center}
    \caption{(a) Unbounded domain with reference triangle $\hat T$. (b) Truncated domain $B_R(0)$ with reference triangle $\hat T$.}
    \label{fig_smwapprox_pert_rescaled_h0__trunc}
\end{figure}

\section{Relationships} \label{sec_relations}

We investigate the relationship between the model \eqref{eq_defTDnum_model} of Section \ref{sec_modelTD} that is motivated by the continuous concept of topological derivatives and the model \eqref{eq_SMW_W} of Section \ref{sec_SMW_W} which is meant to approximate the Sherman-Morrison-Woodbury matrix identity model \eqref{eq_SMWexact}. We restrict our presentation to the case of homogeneous material distribution around the fixed element $T_\ell \in  \mathcal T$.

We start by recalling the definitions of the discretized weak polarization matrix $\mathcal P_{\hat T, h}[\lamOut, \lamIn]$ \eqref{eq_P_That_FEM} and the discretization of the matrix $\BGam_{\hat T}$ \eqref{eq_Gamma_approx} in the homogeneous setting, 
\begin{align}
    \mathcal P_{\hat T, h}[\lamOut, \lamIn] =& \left[ \frac{1}{|\hat T|} \int_{\hat T}\nabla \corr_{\hat T,h}[\lamOut, \lamIn;\Be^{(1)}]\mbox dx \right.&&\left. \frac{1}{|\hat T|} \int_{\hat T}  \nabla \corr_{\hat T,h}[\lamOut, \lamIn;\Be^{(2)}] \mbox dx \right] \\
    \BGam_{\hat T, h}[\lamOut] =& \left[ \frac{1}{\hat T} \int_{\hat T}\nabla W_{\hat T, h}[\lamOut;\Be^{(1)}] \mbox dx \right.&&\left.  \frac{1}{\hat T} \int_{\hat T}\nabla W_{\hat T, h}[\lamOut;\Be^{(2)}]  \mbox dx\right]
\end{align}
where $K_{\hat T, h}[\lamOut, \lamIn; \Be^{(k)}] \in V_{h}^R$ is the solution to \eqref{eq_K_That_FEM} and $W_{\hat T, h}[\lamOut; \Be^{(k)}] \in V_{h}^R$ is the finite element approximation to \eqref{eq_W_That_homo}, i.e.,
\begin{align}
     \int_{B_R(0)}\lambda_{\hat T } \nabla \corr_{\hat T,h}[\lamOut, \lamIn;\Be^{(k)}] \cdot \nabla \psi_h \; \mbox dx =& -(\lamIn - \lamOut) \int_{\hat T} \Be^{(k)} \cdot \nabla \psi_h \; \mbox dx \label{eq_K_homo_discr}\\
    \int_{B_R(0)} \lamOut \nabla W_{\hat T,h}[\lamOut;\Be^{(k)}] \cdot \nabla \psi_h \; \mbox dx =& - \int_{\hat T} \Be^{(k)} \cdot \nabla \psi_h \; \mbox dx \label{eq_W_homo_discr}
\end{align}
for all $\psi_h \in V_{h}^R$. Here, recall that $\lambda_{\hat T}(x) = \chi_{\hat T}(x) \lamIn + \chi_{B_R(0) \setminus \hat T}(x) \lamOut$.

We show the following relation between $\mathcal P_{\hat T, h}[\lamOut, \lamIn]$ and $\BGam_{\hat T, h}[\lamOut]$:
\begin{theorem} \label{thm_P_vs_Gamma}
    It holds that
    \begin{align} \label{eq_P_Gamma_identity1}
        \mathcal P_{\hat T, h}[\lamOut, \lamIn] = (\lamIn - \lamOut) \BGam_{\hat T, h}[\lamOut] \left(\BI_2 - (\lamIn - \lamOut) \BGam_{\hat T, h}[\lamOut] \right)^{-1}
    \end{align}
    and further
    \begin{align} \label{eq_P_Gamma_identity2}
        \BI_2 +\mathcal P_{\hat T, h}[\lamOut, \lamIn] =  (\BI_2 - (\lamIn - \lamOut) \BGam_{\hat T, h}[\lamOut] )^{-1}.
    \end{align}
\end{theorem}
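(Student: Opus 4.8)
The plan is to derive both identities from a single algebraic relation between the finite-dimensional linear systems that define the correctors $\corr_{\hat T,h}[\lamOut,\lamIn;\Be^{(k)}]$ and $W_{\hat T,h}[\lamOut;\Be^{(k)}]$. Observe that \eqref{eq_K_homo_discr} can be rewritten by splitting the material coefficient $\lambda_{\hat T} = \lamOut + (\lamIn-\lamOut)\chi_{\hat T}$, so that the left-hand side becomes $\lamOut\int_{B_R(0)}\nabla\corr_{\hat T,h}[\lamOut,\lamIn;\Be^{(k)}]\cdot\nabla\psi_h + (\lamIn-\lamOut)\int_{\hat T}\nabla\corr_{\hat T,h}[\lamOut,\lamIn;\Be^{(k)}]\cdot\nabla\psi_h$. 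Comparing with \eqref{eq_W_homo_discr}, one sees that $\corr_{\hat T,h}[\lamOut,\lamIn;\Be^{(k)}]$ solves the same operator equation as $W_{\hat T,h}$ applied to the modified right-hand-side vector $\Be^{(k)} + \nabla\corr_{\hat T,h}[\lamOut,\lamIn;\Be^{(k)}]|_{\hat T}$ (using that $\corr$ is piecewise linear and hence $\nabla\corr|_{\hat T}$ is constant on $\hat T$, by Remark~\ref{rem_oneTrig}). By linearity of the solution map $\zeta \mapsto W_{\hat T,h}[\lamOut;\zeta]$ together with the definition \eqref{eq_Gamma_approx} of $\BGam_{\hat T,h}[\lamOut]$, this yields
\begin{align} \label{eq_plan_main}
    \nabla\corr_{\hat T,h}[\lamOut,\lamIn;\Be^{(k)}]|_{\hat T} = (\lamIn-\lamOut)\,\BGam_{\hat T,h}[\lamOut]\left(\Be^{(k)} + \nabla\corr_{\hat T,h}[\lamOut,\lamIn;\Be^{(k)}]|_{\hat T}\right).
\end{align}

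The next step is to collect the two columns $k=1,2$ into matrix form. Let $\BGam := \BGam_{\hat T,h}[\lamOut]$ and let $\mathcal P := \mathcal P_{\hat T,h}[\lamOut,\lamIn]$; recalling that the columns of $\mathcal P$ are precisely the averaged gradients $\frac{1}{|\hat T|}\int_{\hat T}\nabla\corr_{\hat T,h}[\lamOut,\lamIn;\Be^{(k)}]\,\mathrm dx$, and that $\nabla\corr_{\hat T,h}$ is constant on $\hat T$ so the average equals the value, \eqref{eq_plan_main} becomes the matrix equation $\mathcal P = (\lamIn-\lamOut)\BGam(\BI_2 + \mathcal P)$. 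Rearranging gives $(\BI_2 - (\lamIn-\lamOut)\BGam)\mathcal P = (\lamIn-\lamOut)\BGam$, and since $\BI_2 - (\lamIn-\lamOut)\BGam$ is invertible (which must be checked — see below), this is exactly \eqref{eq_P_Gamma_identity1}. Identity \eqref{eq_P_Gamma_identity2} then follows by adding $\BI_2$ to both sides of \eqref{eq_P_Gamma_identity1}, writing $\BI_2 = (\BI_2 - (\lamIn-\lamOut)\BGam)(\BI_2 - (\lamIn-\lamOut)\BGam)^{-1}$, and combining the two terms over the common factor $(\BI_2 - (\lamIn-\lamOut)\BGam)^{-1}$.

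The main obstacle is establishing invertibility of $\BI_2 - (\lamIn-\lamOut)\BGam_{\hat T,h}[\lamOut]$, which is needed both to make the manipulations legitimate and for the right-hand sides of \eqref{eq_P_Gamma_identity1}--\eqref{eq_P_Gamma_identity2} to be well-defined. I would argue this from the variational structure: testing \eqref{eq_W_homo_discr} and using Lemma~\ref{lem_Gamma_w}-type reasoning (or arguing directly via the discrete analogue of Proposition~\ref{prop_complSMW}, identifying $\BI_2 - (\lamIn-\lamOut)\BGam_{\hat T,h}$ with the SMW factor $\BI_d + (\eta-\Blam_\ell)\Bell^\top\BK^{-1}\Bell$ for the truncated-domain stiffness matrix, which is invertible because the perturbed stiffness matrix is), one sees that $-\BGam_{\hat T,h}[\lamOut]$ is symmetric positive semidefinite (it has the form $\Bell^\top\BK^{-1}\Bell$ with $\BK$ SPD on the truncated domain), so for $\lamIn > \lamOut$ the eigenvalues of $(\lamIn-\lamOut)\BGam_{\hat T,h}[\lamOut]$ are non-positive and $\BI_2$ minus this matrix has eigenvalues $\geq 1$; for $\lamIn < \lamOut$ one instead invokes that the matrix $\BK(\Blam) + (\lamIn-\lamOut)\Bell\Bell^\top$ corresponding to the perturbed (truncated-domain) problem is still SPD, hence invertible, which by the SMW identity forces $\BI_2 + (\lamIn-\lamOut)(-\BGam_{\hat T,h}[\lamOut]) = \BI_2 - (\lamIn-\lamOut)\BGam_{\hat T,h}[\lamOut]$ to be invertible. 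A secondary point to state carefully is that the discrete correctors live in $V_h^R$ and are genuinely piecewise linear with $\hat T$ resolved by a single element, so that all "$\nabla\,\cdot\,|_{\hat T}$ is constant, hence equals its $\hat T$-average" steps are exact rather than approximate.
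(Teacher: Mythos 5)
Your proof is correct, but it takes a genuinely different route from the paper's. The paper argues at the level of the assembled stiffness matrices of the two truncated-domain problems: it writes $\tilde{\BA} = \BA + (\lamIn-\lamOut)\Bellh\Bellh^\top$, applies the Sherman--Morrison--Woodbury formula (Lemma~\ref{lem_SMW}) to $\tilde{\BA}^{-1}$, identifies $\mathcal P_{\hat T,h}[\lamOut,\lamIn] = -(\lamIn-\lamOut)\Bellh^\top\tilde{\BA}^{-1}\Bellh$ and $(\lamIn-\lamOut)\BGam_{\hat T,h}[\lamOut] = -(\lamIn-\lamOut)\Bellh^\top\BA^{-1}\Bellh$ via Lemma~\ref{lem_Gamma_w}, and finishes with the identity $(\BI-\BB)^{-1} = \BI + (\BI-\BB)^{-1}\BB$. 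You instead split the coefficient $\lambda_{\hat T} = \lamOut + (\lamIn-\lamOut)\chi_{\hat T}$ inside the weak form \eqref{eq_K_homo_discr}, which exhibits $\corr_{\hat T,h}$ as $(\lamIn-\lamOut)$ times the $W$-solution with source $\Be^{(k)}+\nabla \corr_{\hat T,h}|_{\hat T}$, and by linearity of the solution map you obtain the fixed-point relation $\mathcal P = (\lamIn-\lamOut)\BGam(\BI_2+\mathcal P)$ directly, from which both identities follow by elementary algebra. Your version avoids invoking the SMW lemma altogether, transfers verbatim to the inhomogeneous three-sector setting (where the background coefficient is merely the common part of the two problems), and -- unlike the paper -- explicitly addresses the invertibility of $\BI_2 - (\lamIn-\lamOut)\BGam_{\hat T,h}[\lamOut]$, which is genuinely needed for the right-hand sides of \eqref{eq_P_Gamma_identity1}--\eqref{eq_P_Gamma_identity2} to be well defined; your definiteness argument for that is sound. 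Two small points to tighten: (i) your rearrangement yields $\bigl(\BI_2-(\lamIn-\lamOut)\BGam\bigr)^{-1}(\lamIn-\lamOut)\BGam$, i.e.\ with the inverse on the left, so add the one-line observation that $\BGam$ commutes with $\bigl(\BI_2-(\lamIn-\lamOut)\BGam\bigr)^{-1}$ to match the form stated in \eqref{eq_P_Gamma_identity1}; (ii) state explicitly that \eqref{eq_K_homo_discr} and \eqref{eq_W_homo_discr} are posed over the same space $V_h^R$ on the same mesh with $\hat T$ resolved by a single element, which is what makes the term-by-term comparison and the ``gradient equals its average'' step exact.
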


\begin{proof}

Recall that we use piecewise linear and globally continuous finite elements on a triangular mesh of $M$ elements of $B_R(0)$ where, according to Remark \ref{rem_oneTrig}, the subdomain $\hat T$ is resolved by exactly one triangle of the mesh. Let now the finite element stiffness matrix of \eqref{eq_K_homo_discr} be denoted by $\tilde \BA$ and the one of \eqref{eq_W_homo_discr} by $\BA$ where we use the same mesh and finite element space for both equations. Let $\hat \ell \in \{1, \dots, M\}$ be the element index corresponding to the triangle $\hat T$. Note that the material distribution in \eqref{eq_K_homo_discr} differs from that in \eqref{eq_W_homo_discr} only in element $\hat \ell$ and we have $\BA = \sum_{k=1}^M \lamOut B_k B_k^\top$ and
\begin{align*}
    \tilde \BA = \BA + (\lamIn - \lamOut) \Bellh \Bellh^\top,
\end{align*}
thus, an application of the Sherman-Morrison-Woodbury formula of Lemma \ref{lem_SMW} yields
\begin{align} \label{eq_Atilde_SMW}
    \tilde \BA^{-1} = \BA^{-1} - (\lamIn - \lamOut) \BA^{-1} \Bellh \left(\BI_2 + (\lamIn- \lamOut) \Bellh^\top \BA^{-1} \Bellh \right)^{-1} \Bellh^\top \BA^{-1}.
 \end{align}
 On the other hand, we know from Lemma \ref{lem_Gamma_w} that for the chosen piecewise linear finite elements where $\hat T$ is resolved by only one triangle (i.e. $\nabla W_{\hat T,h}[\lamOut; \Be^{(k)}](x)$, $\nabla K_{\hat T, h}[\lamOut, \lamIn;\Be^{(k)}](x)$ are constant on $\hat T$), we have
 \begin{align*}
     (\lamIn - \lamOut) \BGam_{\hat T,h}[\lamOut] = - (\lamIn - \lamOut) \Bellh^\top \BA^{-1} \Bellh \quad \mbox{ and } \quad \mathcal P_{\hat T,h}[\lamOut, \lamIn]= - (\lamIn - \lamOut) \Bellh^\top \tilde \BA^{-1} \Bellh.
 \end{align*}
Thus, denoting $\BGam_{\hat T, h}^\lambda := (\lamIn-\lamOut)\BGam_{\hat T, h}[\lamOut]$ and plugging in \eqref{eq_Atilde_SMW} yields
\begin{align*}
     \mathcal P_{\hat T,h}[\lamOut, \lamIn] =& \BGam_{\hat T, h}^\lambda + \BGam_{\hat T, h}^\lambda(\BI_2 - \BGam_{\hat T, h}^\lambda)^{-1} \BGam_{\hat T, h}^\lambda \\
     =& \BGam_{\hat T, h}^\lambda (\BI_2 +(\BI_2 - \BGam_{\hat T, h}^\lambda)^{-1} \BGam_{\hat T, h}^\lambda) \\
     =& \BGam_{\hat T, h}^\lambda (\BI_2 - \BGam_{\hat T, h}^\lambda)^{-1}
\end{align*}
where we used the identity $ (\BI - \BB)^{-1} = \BI + (\BI -\BB)^{-1} \BB$ for any matrix $\BB$ such that $\BI-\BB$ is invertible in the last step. This proves \eqref{eq_P_Gamma_identity1}. In order to see \eqref{eq_P_Gamma_identity2} note that, by \eqref{eq_P_Gamma_identity1}, it holds $\BI_2 + \mathcal P_{\hat T,h}[\lamOut, \lamIn] = \BI_2 + \BGam_{\hat T, h}^\lambda  (\BI_2 - \BGam_{\hat T, h}^\lambda )^{-1} = (\BI_2 - \BGam_{\hat T, h}^\lambda  + \BGam_{\hat T, h}^\lambda ) (\BI - \BGam_{\hat T, h}^\lambda )^{-1} = (\BI - \BGam_{\hat T, h}^\lambda )^{-1}$.\end{proof}

\begin{corollary} \label{cor_TDnum_SMWapprox}
    From Theorem \ref{thm_P_vs_Gamma} it follows immediately that the two models $\hat{\mathcal J}_{\text{TDnum}}$ defined in \eqref{eq_defTDnum_model} and $\hat{\mathcal J}_{\text{SMWapprox}}$ defined in \eqref{eq_SMW_W} coincide.
\end{corollary}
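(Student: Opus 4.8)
The plan is to verify that the two sums defining $\hat{\mathcal J}_{\text{TDnum}}$ and $\hat{\mathcal J}_{\text{SMWapprox}}$ coincide term by term, the entire content being Theorem~\ref{thm_P_vs_Gamma}. Since Section~\ref{sec_relations} and hence both model evaluations are read here in the homogeneous setting around each fixed element $T_\ell$, the four-tuple of sector materials $(\Blam_\ell, \lambda^{S_1}_{T_\ell}, \lambda^{S_2}_{T_\ell}, \lambda^{S_3}_{T_\ell})$ collapses to the single value $\Blam_\ell$. Consequently, in the $\ell$-th summand of \eqref{eq_defTDnum_model} one has $\mathcal P_{\hat T, h}[(\Blam_\ell, \lambda^{S_1}_{T_\ell}, \lambda^{S_2}_{T_\ell}, \lambda^{S_3}_{T_\ell}), \Beta_\ell] = \mathcal P_{\hat T, h}[\Blam_\ell, \Beta_\ell]$, while in the $\ell$-th summand of \eqref{eq_SMW_W} one has $\BGam_{\hat T, \ell} = \BGam_{\hat T, h}[\Blam_\ell]$.

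First I would fix $\ell \in \{1,\dots,m\}$ and apply identity \eqref{eq_P_Gamma_identity2} of Theorem~\ref{thm_P_vs_Gamma} with the material values $\lamOut$ and $\lamIn$ replaced by $\Blam_\ell$ and $\Beta_\ell$, respectively; this substitution is admissible since $\Blam_\ell, \Beta_\ell \in [\lambdaMin,\lambdaMax]$ are positive, so that the $K$- and $W$-stiffness matrices on the truncated domain stay invertible and nothing in the proof of Theorem~\ref{thm_P_vs_Gamma} uses an ordering of the two values. This yields
\[
    \BI_2 + \mathcal P_{\hat T, h}[\Blam_\ell, \Beta_\ell] = \big( \BI_2 - (\Beta_\ell - \Blam_\ell)\, \BGam_{\hat T, h}[\Blam_\ell] \big)^{-1} = \big( \BI_2 - (\Beta_\ell - \Blam_\ell)\, \BGam_{\hat T, \ell} \big)^{-1},
\]
with the inverse on the right existing by the statement of Theorem~\ref{thm_P_vs_Gamma}. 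Substituting this into the $\ell$-th summand of $\hat{\mathcal J}_{\text{TDnum}}$ in \eqref{eq_defTDnum_model} turns it into exactly the $\ell$-th summand of $\hat{\mathcal J}_{\text{SMWapprox}}$ in \eqref{eq_SMW_W}, namely $|T_\ell|(\Beta_\ell - \Blam_\ell)(\nabla u_h|_{T_\ell})^\top \big( \BI_2 - (\Beta_\ell - \Blam_\ell)\, \BGam_{\hat T, \ell}\big)^{-1} \nabla u_h|_{T_\ell}$, since both summands carry the same leading factor $-|T_\ell|(\Beta_\ell-\Blam_\ell)$. Summing over $\ell$ and adding the common constant $\mathcal J(\Blam)$ gives $\hat{\mathcal J}_{\text{TDnum}}(\Beta) = \hat{\mathcal J}_{\text{SMWapprox}}(\Beta)$ for every admissible $\Beta$.

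I expect no real obstacle: once Theorem~\ref{thm_P_vs_Gamma} is available, the argument is pure bookkeeping. The only points worth a line of justification are that the ``in''/``out'' roles of the polarization matrix correspond to $\Beta_\ell$ and $\Blam_\ell$ (consistent with the convention of replacing $(\lamIn-\lamOut)$ by $(\Beta_\ell-\Blam_\ell)$ used throughout Sections~\ref{sec_modelTD} and~\ref{sec_SMW_W}), and that one compares the finite-element realizations $\mathcal P_{\hat T, h}$ and $\BGam_{\hat T, h}$ that are actually evaluated in the two models — which is precisely the level at which Theorem~\ref{thm_P_vs_Gamma} is formulated. If desired, the same termwise argument extends verbatim to the inhomogeneous sector construction once the analogue of Theorem~\ref{thm_P_vs_Gamma} with four-sector material data is recorded, since its proof only uses that the $K$- and $W$-stiffness matrices differ by the rank-$d$ update $(\lamIn-\lamOut)\Bellh\Bellh^\top$ in element $\hat\ell$.
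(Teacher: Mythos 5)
Your argument is correct and is exactly the substitution the paper intends when it declares the corollary "immediate": apply identity \eqref{eq_P_Gamma_identity2} of Theorem \ref{thm_P_vs_Gamma} with $(\lamOut,\lamIn)$ replaced by $(\Blam_\ell,\Beta_\ell)$ in each summand, so that $\BI_2+\mathcal P_{\hat T,h}[\Blam_\ell,\Beta_\ell]=(\BI_2-(\Beta_\ell-\Blam_\ell)\BGam_{\hat T,\ell})^{-1}$ and the $\ell$-th terms of \eqref{eq_defTDnum_model} and \eqref{eq_SMW_W} match. Your closing remark on the four-sector inhomogeneous case also mirrors the paper's own remark following the corollary, so nothing is missing.
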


\begin{remark}
    We remark that the same proof can be conducted in the case of inhomogeneous material around the element of interest and the statements of Theorem \ref{thm_P_vs_Gamma} and Corollary \ref{cor_TDnum_SMWapprox} remain valid also in that case.
\end{remark}

At the first glance, Theorem \ref{thm_P_vs_Gamma} and Corollary \ref{cor_TDnum_SMWapprox} seem very surprising since they state that the model that is based on the continuous concept of topological derivatives brought to a discrete setting coincides with a model that is based on a certain approximation of a term generated by the purely algebraic Sherman-Morrison-Woodbury matrix identity. This resemblance, however, has been identified in \cite{Nazarov2009} on the purely continuous setting for the case of elliptic inclusions.
\begin{lemma}[\cite{Nazarov2009}] \label{lem_Nazarov}
    Assume that $\omega$ is an ellipse. Then
    \begin{align}\label{eq_PNazarov}
        \mathcal P_\omega[\lamOut, \lamIn] = -(\lamIn - \lamOut) \left(\BI_2 + (\lamIn - \lamOut) \BPsi[\lamOut] \right)^{-1} \BPsi[\lamOut]
    \end{align}
    where $\BPsi[\lamOut] \in \mathbb R^{2\times 2}$ is given by
    \begin{align} \label{eq_defPsi} 
        \BPsi[\lamOut]_{i,j} = - \left( \int_{\partial \omega} n(x) \nabla_x \Phi[\lamOut](x)^\top \; \mbox ds_x \right)_{i,j} =  -  \int_{\partial \omega} n_i \partial_{x_j} \Phi[\lamOut](x) \; \mbox ds_x
    \end{align}
    with the fundamental solution $\Phi[\lamOut]$ of the operator $u \mapsto - \mbox{div}(\lamOut \nabla u)$, i.e., $\Phi[\lamOut](x) = - 1/ (2 \pi \lamOut) \mbox{ln}(|x|)$. 
\end{lemma}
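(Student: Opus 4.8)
The plan is to recast the exterior transmission problem \eqref{eq_defCorr} (taken with $i=\mathrm{out}$, $j=\mathrm{in}$) as a boundary integral equation on $\partial\omega$ and to exploit that, for an ellipse, that equation restricts to a two–dimensional linear system. First I would pass to the strong form: since $\zeta$ is constant, $\corr := \corr_\omega[\lamOut,\lamIn;\zeta]$ is harmonic in $\omega$ and in $\mathbb R^2\setminus\overline\omega$, continuous across $\partial\omega$, and satisfies the conormal–jump condition $\lamIn\,\partial_n\corr|_{\mathrm{int}} - \lamOut\,\partial_n\corr|_{\mathrm{ext}} = -(\lamIn-\lamOut)(\zeta\cdot n)$ on $\partial\omega$, with $n$ the outer normal of $\omega$; membership in $\dot{BL}(\mathbb R^2)$ fixes the solution up to the irrelevant additive constant.

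Next I would represent $\corr$ by the single–layer potential $\corr(x)=\int_{\partial\omega}\Phi[\lamOut](x-y)\,\phi(y)\,\mathrm ds_y$ built from the fundamental solution $\Phi[\lamOut]$ of $u\mapsto-\operatorname{div}(\lamOut\nabla u)$. Continuity across $\partial\omega$ is automatic, and the standard jump relations for the conormal derivative turn the jump condition into a linear integral equation for the density $\phi$ governed by the Neumann–Poincaré operator $\mathcal K^*_{\partial\omega}$ of $\partial\omega$, of the form $\big(\mu\,\BI-\mathcal K^*_{\partial\omega}\big)\phi = c\,(\zeta\cdot n)$ with contrast parameter $\mu=\tfrac{\lamIn+\lamOut}{2(\lamIn-\lamOut)}$ and an explicit constant $c$. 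Substituting this representation into the definition \eqref{eq_defPolMat} of the polarization matrix and rewriting the volume average by the divergence theorem, $\int_\omega\nabla\corr=\int_{\partial\omega}\corr\,n\,\mathrm ds$ componentwise, expresses $\mathcal P_\omega[\lamOut,\lamIn]\zeta$ as a linear boundary functional of $\phi$, hence of $\zeta\cdot n$.

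Now the ellipse enters decisively. Because the Newtonian potential of a constant density over an ellipse is a quadratic polynomial inside the ellipse, $\mathcal K^*_{\partial\omega}$ leaves invariant the two–dimensional space spanned by the boundary traces $x\mapsto \eta\cdot n(x)$, $\eta\in\mathbb R^2$; consequently both the integral equation and the averaged–gradient functional descend to linear maps on $\mathbb R^2$. I would identify these $2\times2$ representations with the matrix $\BPsi[\lamOut]$ of \eqref{eq_defPsi}, which is exactly the boundary–integral form of $\eta\mapsto-\tfrac1{|\omega|}\int_\omega\nabla\big(\text{single–layer potential of }\eta\cdot n\big)$, obtained again from $\int_\omega\nabla(\cdots)=\int_{\partial\omega}(\cdots)\,n\,\mathrm ds$ together with the explicit form of $\Phi[\lamOut]$. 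With this identification the reduced equation reads $\big(\BI_2+(\lamIn-\lamOut)\BPsi[\lamOut]\big)$ applied to the relevant coefficient vector, and solving it and plugging into the averaged gradient produces \eqref{eq_PNazarov} after the same resolvent manipulation used in the proof of Theorem \ref{thm_P_vs_Gamma}, namely $(\BI-\BB)^{-1}=\BI+(\BI-\BB)^{-1}\BB$.

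I expect the main obstacle to be precisely this third step: proving rigorously that $\mathcal K^*_{\partial\omega}$ preserves the span of the functions $\eta\cdot n$ and that the induced $2\times2$ matrix is exactly $\BPsi[\lamOut]$ as written — equivalently, the classical Eshelby/Pólya property that a uniform applied field produces a uniform interior field in an elliptic inclusion. Everything else is either standard potential theory (jump relations, mapping properties of the single–layer operator on $\dot{BL}(\mathbb R^2)$) or the linear algebra already carried out in Theorem \ref{thm_P_vs_Gamma}. As an independent check, and an alternative route to the whole statement, one may instead compute both sides in the principal axes of the ellipse using the depolarization factors $L_1=b/(a+b)$, $L_2=a/(a+b)$ for semi–axes $a,b$: there $\BPsi[\lamOut]=\tfrac1{\lamOut}\operatorname{diag}(L_1,L_2)$, and both $\mathcal P_\omega[\lamOut,\lamIn]$ and the right–hand side of \eqref{eq_PNazarov} equal $\operatorname{diag}\!\big(-\tfrac{(\lamIn-\lamOut)L_i}{\lamOut+(\lamIn-\lamOut)L_i}\big)_{i=1,2}$, which coincide; the disk case $L_1=L_2=\tfrac12$ recovers the closed form already quoted below \eqref{eq_defPolMat}.
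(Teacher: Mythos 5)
Your argument is correct in substance, but it is a genuinely different route from the paper's: the paper does not prove this lemma at all, it simply cites \cite[Sec.~8]{Nazarov2009} and observes that the scalar case is obtained by restricting the elasticity problem treated there. What you supply instead is the standard layer-potential proof: single-layer representation of the corrector, reduction of the transmission condition to a Neumann--Poincar\'e equation with spectral parameter $\mu=\tfrac{\lamIn+\lamOut}{2(\lamIn-\lamOut)}$, and the Eshelby/P\'olya--Szeg\H{o} property of the ellipse (the Newtonian potential of $\chi_\omega$ is quadratic inside $\omega$, so the span of $\{\eta\cdot n\}$ is invariant), followed by the same resolvent identity used in Theorem~\ref{thm_P_vs_Gamma}. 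This buys a self-contained argument and, via your principal-axes computation with depolarization factors $L_1=b/(a+b)$, $L_2=a/(a+b)$, an independent verification that is consistent with the disk formula $\mathcal P_{B_1(0)}[\lamOut,\lamIn]=-\tfrac{\lamIn-\lamOut}{\lamIn+\lamOut}\BI_2$ quoted after \eqref{eq_defPolMat}; the price is that you must import the classical uniform-field property, which the citation route gets for free. One small correction to your identification step: writing $S$ for the single layer built from $\Phi[\lamOut]$ and $N_\omega(x)=\int_\omega\Phi[\lamOut](x-y)\,\mbox{d}y$, one has $S(\eta\cdot n)=-\eta\cdot\nabla N_\omega$ and hence
\begin{align*}
  \BPsi[\lamOut]\,\eta \;=\; -\big(\nabla^2 N_\omega|_\omega\big)\eta \;=\; +\frac{1}{|\omega|}\int_\omega \nabla\big(S(\eta\cdot n)\big)\,\mbox{d}x,
\end{align*}
so the minus sign in your proposed boundary-integral characterization of $\BPsi[\lamOut]$ is spurious; with the sign fixed, $\BPsi[\lamOut]=\tfrac{1}{\lamOut}\mathrm{diag}(L_1,L_2)$ in principal axes and both sides of \eqref{eq_PNazarov} reduce to $\mathrm{diag}\big(-\tfrac{(\lamIn-\lamOut)L_i}{\lamOut+(\lamIn-\lamOut)L_i}\big)_{i=1,2}$ as you claim.
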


\begin{proof}
    This follows straightforwardly from \cite[Sec. 8]{Nazarov2009} by restricting the (vector-valued) elasticity problem treated there to the scalar Laplace-type problem considered here.
\end{proof}

From Lemma \ref{lem_Nazarov}, it follows in the same way as in the proof of Theorem \ref{thm_P_vs_Gamma} that
\begin{align}
    \BI_2 + \mathcal P_\omega[\lamOut, \lamIn] = (\BI_2 + (\lamIn - \lamOut) \BPsi[\lamOut] )^{-1},
\end{align}
and thus, from \eqref{eq_TD_IpP}, we get the alternative representation of the topological derivative for elliptic inclusion shapes $\omega$
\begin{align}
    d \mathcal J[\lamOut, \lamIn](\Omega)(z, \omega) = (\lamIn - \lamOut) \nabla u(z)^\top (\BI_2 + (\lamIn - \lamOut) \BPsi[\lamOut] )^{-1}  \nabla p(z).
\end{align}
Thus, the matrix $\BGam_{\hat T, h}[\lamOut]$ used in $\hat{\mathcal J}_{\text{SMWapprox}}$ can also be seen as an approximation to the negative fundamental matrix $-\BPsi[\lamOut]$.
Finally, note that the proof of Lemma \ref{lem_Nazarov} in \cite{Nazarov2009} is not valid for triangular inclusion shapes, however, at the discrete level, relation \eqref{eq_P_Gamma_identity1} still holds. Furthermore, note that the matrix $\BPsi[\lamOut]$ is symmetric such that the right hand side of \eqref{eq_PNazarov} can also be written as $-(\lamIn - \lamOut) \BPsi[\lamOut]\left(\BI_2 + (\lamIn - \lamOut) \BPsi[\lamOut] \right)^{-1} $ which coincides with the structure of \eqref{eq_P_Gamma_identity1}.

\section{Numerical experiments} \label{sec_numExp}

In this section, we examine the models introduced in Sections \ref{sec_modelTD} and \ref{sec_SMW_W} and compare them to the exact solution as well as the diagonal approximation model introduced in Section \ref{sec_SMWdiag}. Since we noted in Section 6 that model \eqref{eq_defTDnum_model} of Section \ref{sec_modelTD} and model \eqref{eq_SMW_W} of Section \ref{sec_SMW_W} coincide, we will here only consider the latter model. We remark that coincidence of the two models was observed also in all numerical examples.

All numerical results are illustrated for the model problem introduced in Section \ref{sec_modelProb} with the two-dimensional computational domain $\Dsf = (0,1)^2$ with Dirichlet and Neumann boundaries $\Gamma_D = \{(0,y), y \in (0,1)\}\cup \{(x,0), x \in (0,1)\}$, $\Gamma_N = \partial \Dsf \setminus \Gamma_N$ with corresponding data $g_D = 0$, $g_N(x_1, x_2) = x_1 x_2$ and the constant source term $f(x_1, x_2) = 1$. The material coefficient $\lambda(x)$ will vary between the values $\lambdaMin=1$ and $\lambdaMax=1000$. 

We begin by considering a homogeneous setting.
\subsection{Homogeneous material distribution}
Here, we consider a constant material distribution, i.e., $\lambda_\Omega(x) = \lamOut$ for all $x \in \Dsf$, which corresponds to setting $\Omega = \emptyset$ for some value $\lamOut$ in the setting of Section \ref{sec_modelProb_cont}. See also Figure \ref{fig_design_solution_homo} for plots of the material distribution and the finite element solution on a mesh with $n=1089$ nodes and $m=2048$ elements.

\begin{figure}
    \begin{tabular}{cc}
        \includegraphics[width=.5\textwidth, trim=100 0 50 0, clip]{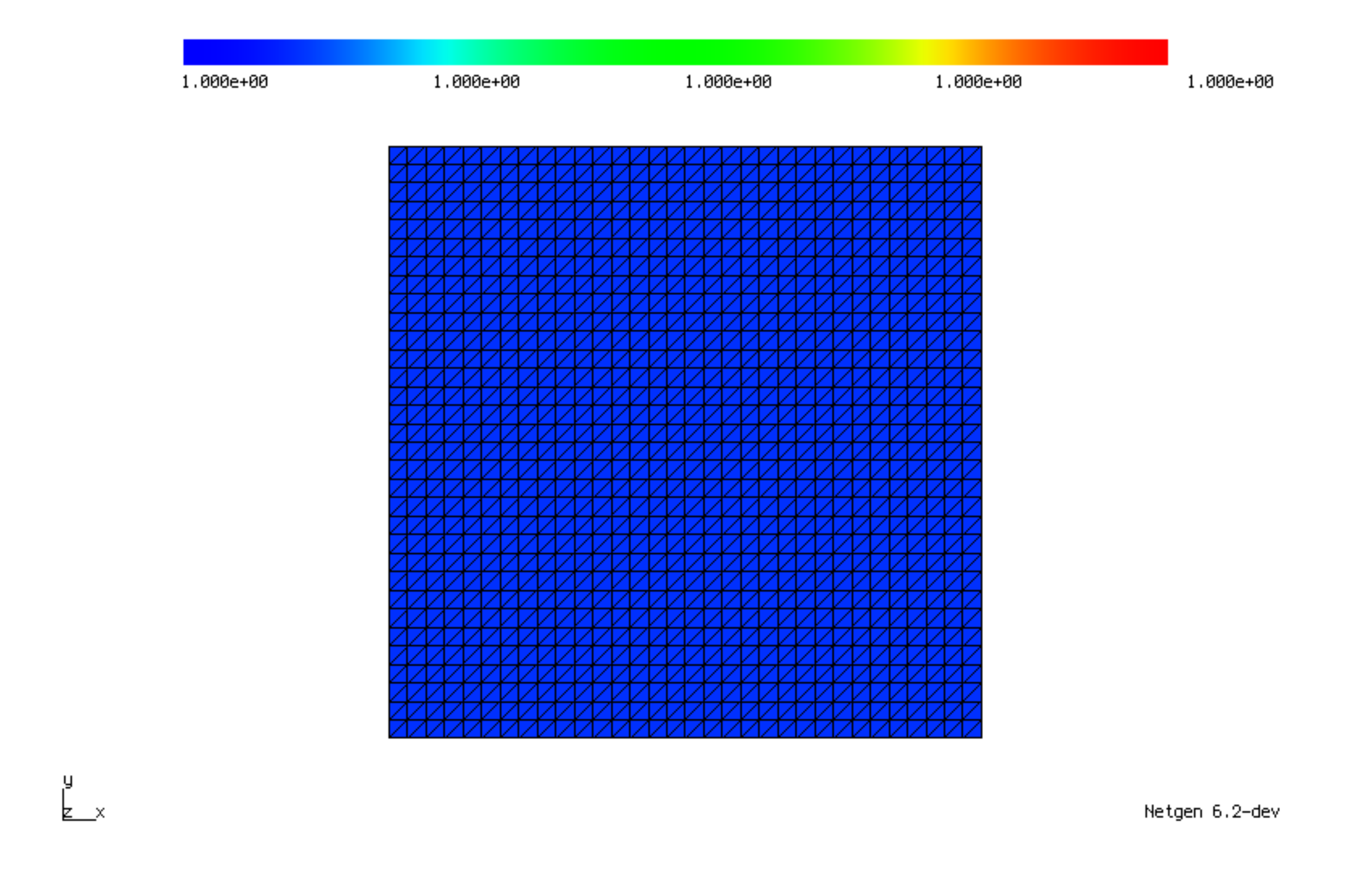} &
        \includegraphics[width=.5\textwidth, trim=100 0 50 0, clip]{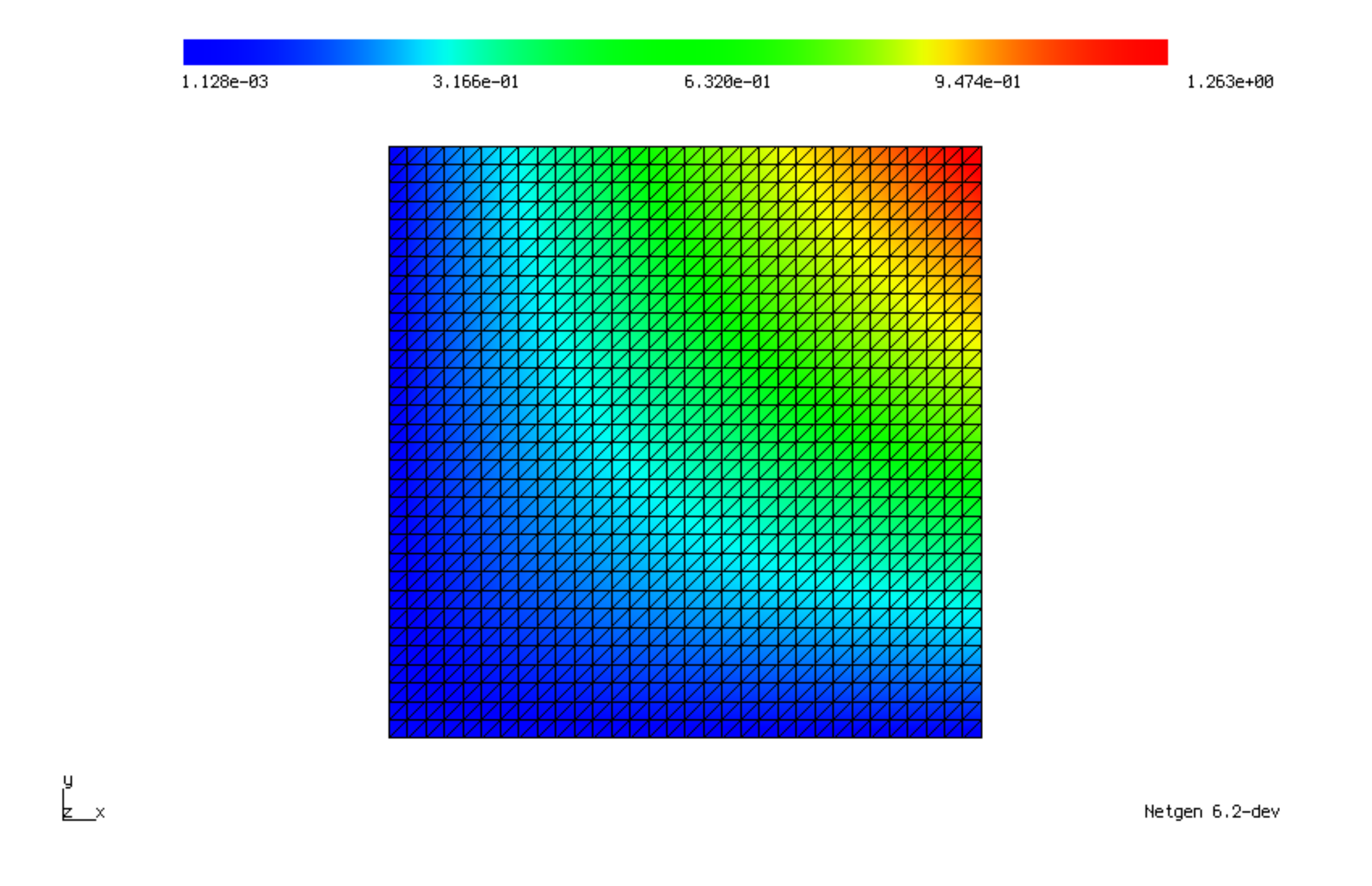} \\
        (a) & (b)
    \end{tabular}
    \caption{(a) Material coefficient $\lambda(x)$ for homogeneous setting. (b) Finite element solution $u_h$ of problem \eqref{eq_probpde} with data specified in Section \ref{sec_numExp} for homogeneous material distribution.}
    \label{fig_design_solution_homo}
\end{figure}

\subsubsection{Numerical comparison of different models for fixed element}
We fix the interior element $T_\ell \in \mathcal T$ as that element of type 1 (cf. Figure \ref{fig_mesh_elTypes}) that has the point $(0.5, 0.25)$ as its bottom right vertex and compare the different models we introduced in the previous sections for the case where the homogeneous material distribution is perturbed only in that one element $T_\ell$, i.e., $\Beta = \Blam + (\eta - \Blam_\ell)\Be^{(\ell)}$. Figure \ref{fig_comparison_oneTrig} shows the different models as functions of the perturbed value $\eta \in [\lambdaMin, \lambdaMax] = [1, 1000]$ for three different background material values $\lamOut = 1$, $\lamOut \approx 145.834$, $\lamOut = 1000$ (cf. Remark \ref{rem_equiError}). 
In Fig. \ref{fig_comparison_oneTrig} we can see the exact solution $\mathcal J(\Beta)$ (where the perturbed stiffness matrix is inverted) for certain values of $\eta$ along with the exact Sherman-Morrison-Woodbury model $\hat{\mathcal J}_{\text{SMW}}$ \eqref{eq_SMWexact} which shows, as expected, perfect coincidence. Moreover, we can see the diagonal approximation of the Sherman-Morrison-Woodbury model $\hat{\mathcal J}_{\text{SMWdiag}}$ \eqref{eq_SMWdiag_model} which shows a certain error, but can be evaluated more efficiently. The models $\hat{\mathcal J}_{\text{TDnum}}$ \eqref{eq_defTDnum_model} and $\hat{\mathcal J}_{\text{SMWapprox}}$ \eqref{eq_SMW_W} can be seen to match exactly, as was predicted by Theorem \ref{thm_P_vs_Gamma} and Corollary \ref{cor_TDnum_SMWapprox}. Moreover, it can be seen from Fig. \ref{fig_comparison_oneTrig} that these two models approximate the exact solution extraordinarily well while being cheap to evaluate during the online phase (after precomputations have been done in an offline phase, cf. Remark \ref{rem_precomputation}).
For comparison, we also included the linearization $\mathcal J(\Beta) \approx \mathcal J(\Blam) - |T_\ell| (\eta - \Blam_\ell) |\nabla u_h|_{T_\ell}|^2$ and the topological derivative model when the analytical formula for the topological derivative of circular inclusions \eqref{eq_TDformula_analytic} is used. It can be seen that the linearization model is far away from the true solution. But also the latter model shows a significant error which confirms the necessity to account for the triangular inclusion shape as it was done in Section \ref{sec_modelTD}.

In order to quantify these errors, let us define the relative error measure of a model $\hat{\mathcal J}$ in an element $T_\ell$ for a given material distribution $\Blam$ by
\begin{align} \label{eq_max_eta}
   \delta \hat{\mathcal J}[T_\ell] := \frac{1}{\Delta \mathcal J[T_\ell]} \; \underset{\eta \in [\lambdaMin, \lambdaMax]}{\mbox{max }} \left \lvert \hat{\mathcal J}\left(\Blam + (\eta - \Blam_\ell)\Be^{(\ell)}\right)-\mathcal J\left(\Blam + (\eta - \Blam_\ell)\Be^{(\ell)}\right) \right \rvert
 \end{align}
 where $\Delta \mathcal J[T_\ell] = \mbox{max}_{t\in (\lambdaMin, \lambdaMax)} \mathcal J(\Blam + (t - \Blam_\ell)\Be^{(\ell)}) - \mbox{min }_{t\in (\lambdaMin, \lambdaMax)} \mathcal J(\Blam + (t - \Blam_\ell)\Be^{(\ell)})$ is the difference of maximal and minimal values of the exact model in $T_\ell$. Thus, $\delta \hat{\mathcal J}[T_\ell]$ measures the maximum relative error of a model $\hat{\mathcal J}$ in element $T_\ell$ relative to the variation of the exact cost function $\mathcal J$. The relative errors according to \eqref{eq_max_eta} for the three expansion points $\lambda(x) = \lamOut$ investigated in Figure \ref{fig_comparison_oneTrig} are as follows: For the linearization model the errors are as high as (20 954$\%$, 85.61$\%$, 25.21$\%$), for the model $\hat{\mathcal J}_{\text{SMWdiag}}$ they are (16.65$\%$, 3.41$\%$, 0.99$\%$), for $\hat{\mathcal J}_{\text{TDcirc}}$ we have (57.93$\%$, 27.23$\%$, 49.43$\%$) and for the coinciding models $\hat{\mathcal J}_{\text{TDnum}}$ and $\hat{\mathcal J}_{\text{SMWapprox}}$ we have the values (1.18$\%$, 0.74$\%$, 1.46$\%$).

\begin{figure}
    \begin{tabular}{ccc}
       \includegraphics[width=.33\textwidth, trim=0 0 0 0, clip]{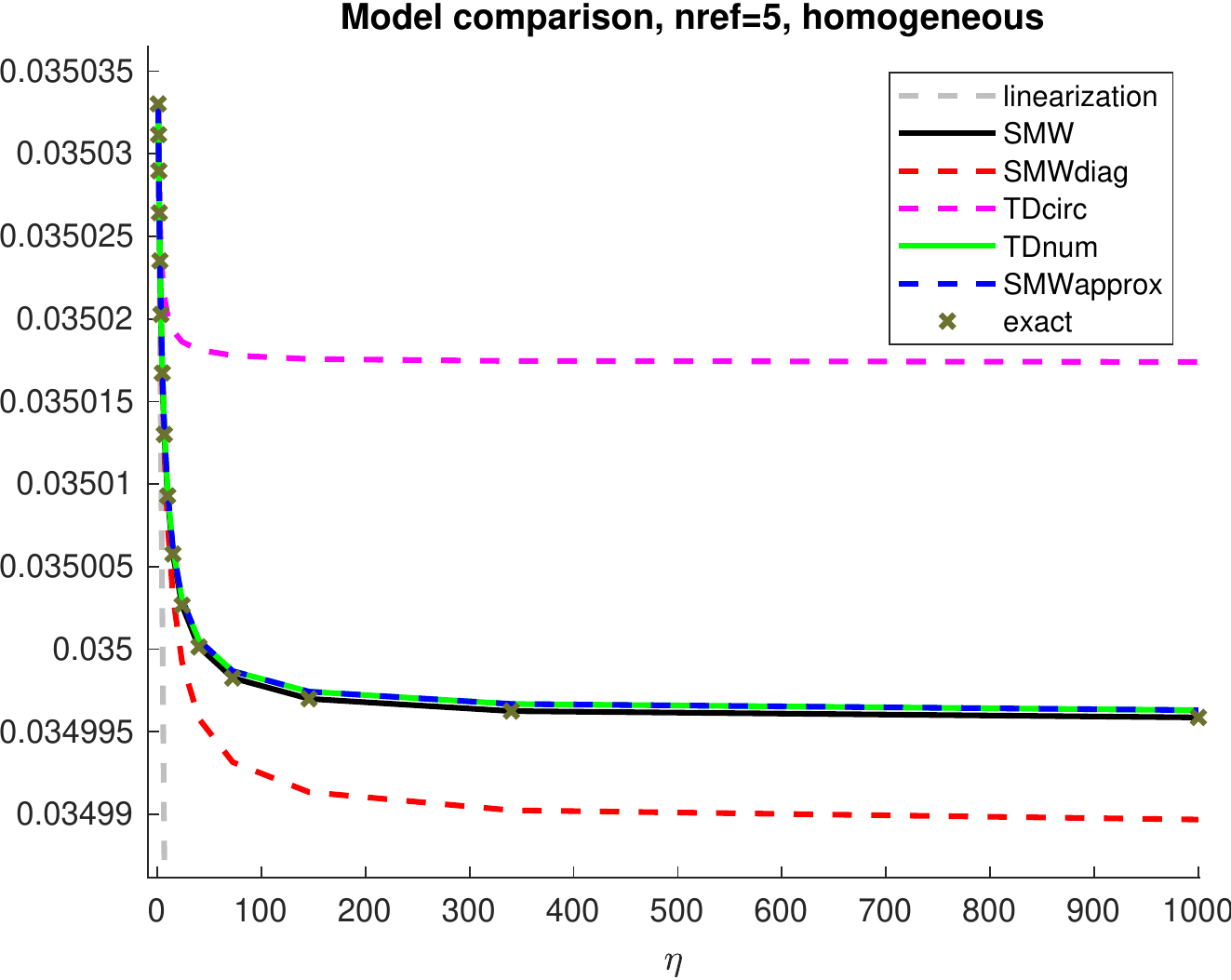} & \includegraphics[width=.33\textwidth, trim=0 0 0 0, clip]{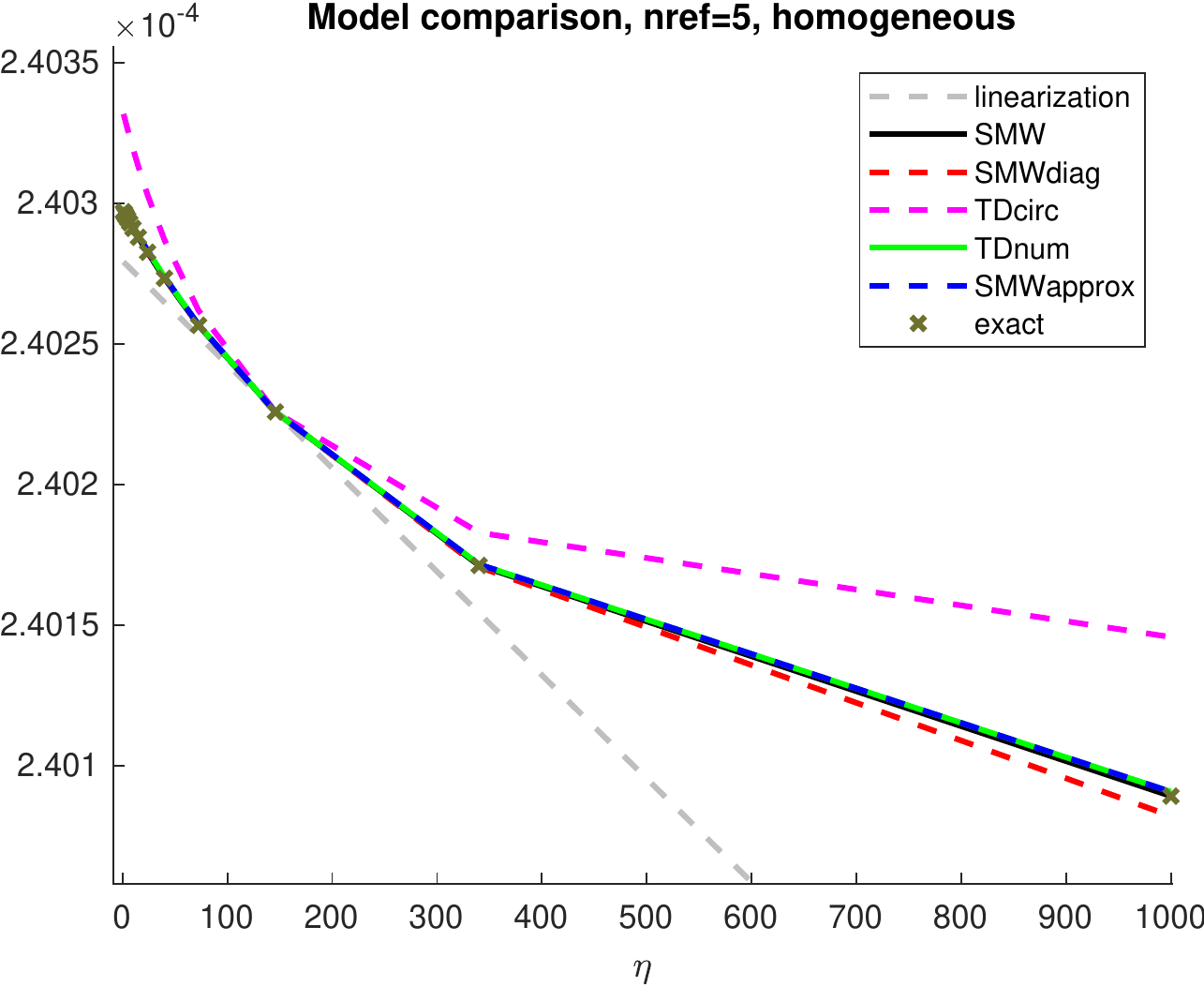}& \includegraphics[width=.33\textwidth, trim=0 0 0 0, clip]{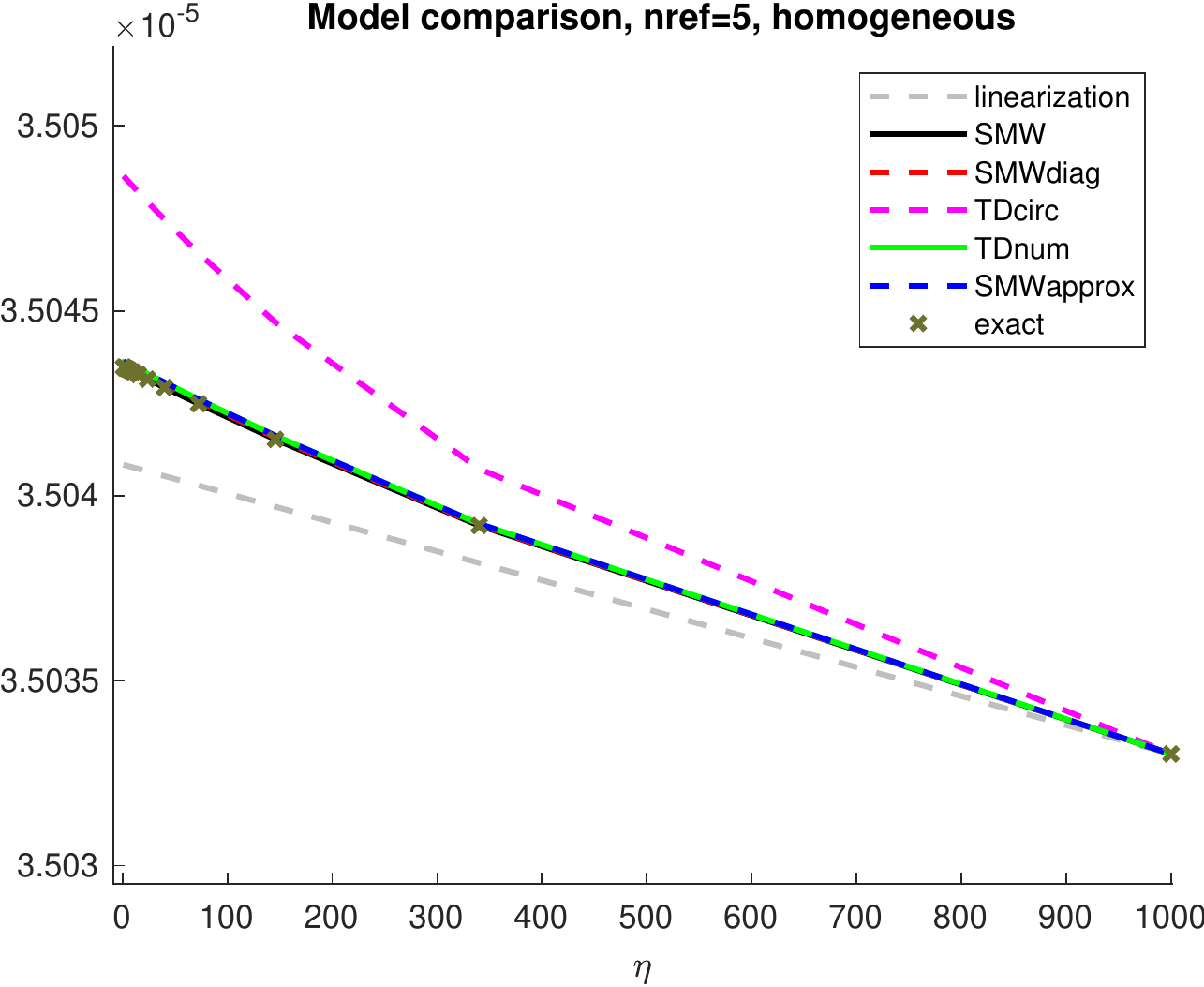} \\
       (a) & (b) & (c)
    \end{tabular}
    \caption{Comparison of different models for homogeneous setting with background material (a) $\lamOut = 1$, (b) $\lamOut \approx 145.834$, (c) $\lamOut = 1000$ as functions of the perturbed material coefficient in a fixed element $T_\ell$.}
    \label{fig_comparison_oneTrig}
\end{figure}

\begin{remark} \label{rem_equiError}
    In Figure \ref{fig_comparison_oneTrig}, the models as well as the exact solution were evaluated at certain perturbation values $\eta^{(k)} \in [1, 1000]$, $k=1, \dots, 16$. These values have been chosen in the following way: It was observed numerically that the exact solution in Fig. \ref{fig_comparison_oneTrig} behaves similarly to $a+b \eta^{-0.5}$ for some constants $a, b$. Based on this observation, the points $\eta^{(k)}$ were chosen in such a way that, using these points as interpolation nodes, a piecewise linear interpolation of $a+b \eta^{-0.5}$ yields an equilibrated error, see Fig. \ref{fig_equi_error}. This was achieved by solving a system of nonlinear equations ensuring that the maximum interpolation error between any two neighboring nodes is equal. The values for $N=16$ points are given in Table \ref{tab_valuesEtak16}. While these points here are used solely for visualization purposes, their role will become more important in Section \ref{sec_num_inhomo} to decide for which values of the material coefficient the (computationally expensive) precomputation should be carried out.
\end{remark}
\begin{remark} \label{rem_hoelderPara}
    Also the choice of the H\"older parameter $\alpha=-0.5$ in \eqref{eq_lambdaSj} was motivated by the observation that the exact solution in Fig. \ref{fig_comparison_oneTrig}(a) behaves roughly like $a+b \eta^{-0.5} =: f(\eta)$. Thus, an effective value for $\lambda$ can be obtained by the relation $f(\lambda) = \sum w_i f(\lambda_i)$ for given values $\lambda_i$ with corresponding weights $w_i$ such that $\sum w_i = 1$, which, by inverting $f$, results in the chosen average value \eqref{eq_lambdaSj}.
\end{remark}
\begin{table}
    \begin{center}
    \begin{tabular}{|c|c|c|c|c|c|c|c|}
     1 & 1.252 & 1.590 & 2.050 & 2.688 & 3.596 & 4.921 & 6.917   \\ \hline
     10.035 & 15.127 & 23.901 & 40.072 & 72.563 & 145.834 & 340.187 & 1000
    \end{tabular} \vspace{-6mm}
    \end{center}
    \caption{Values $\eta^{(k)}$, $k=1,\dots, 16$, used for visualization and precomputation}
    \label{tab_valuesEtak16}
\end{table}

\begin{figure}
    \begin{tabular}{cc}
    \includegraphics[width=.5\textwidth, trim=0 0 0 0, clip]{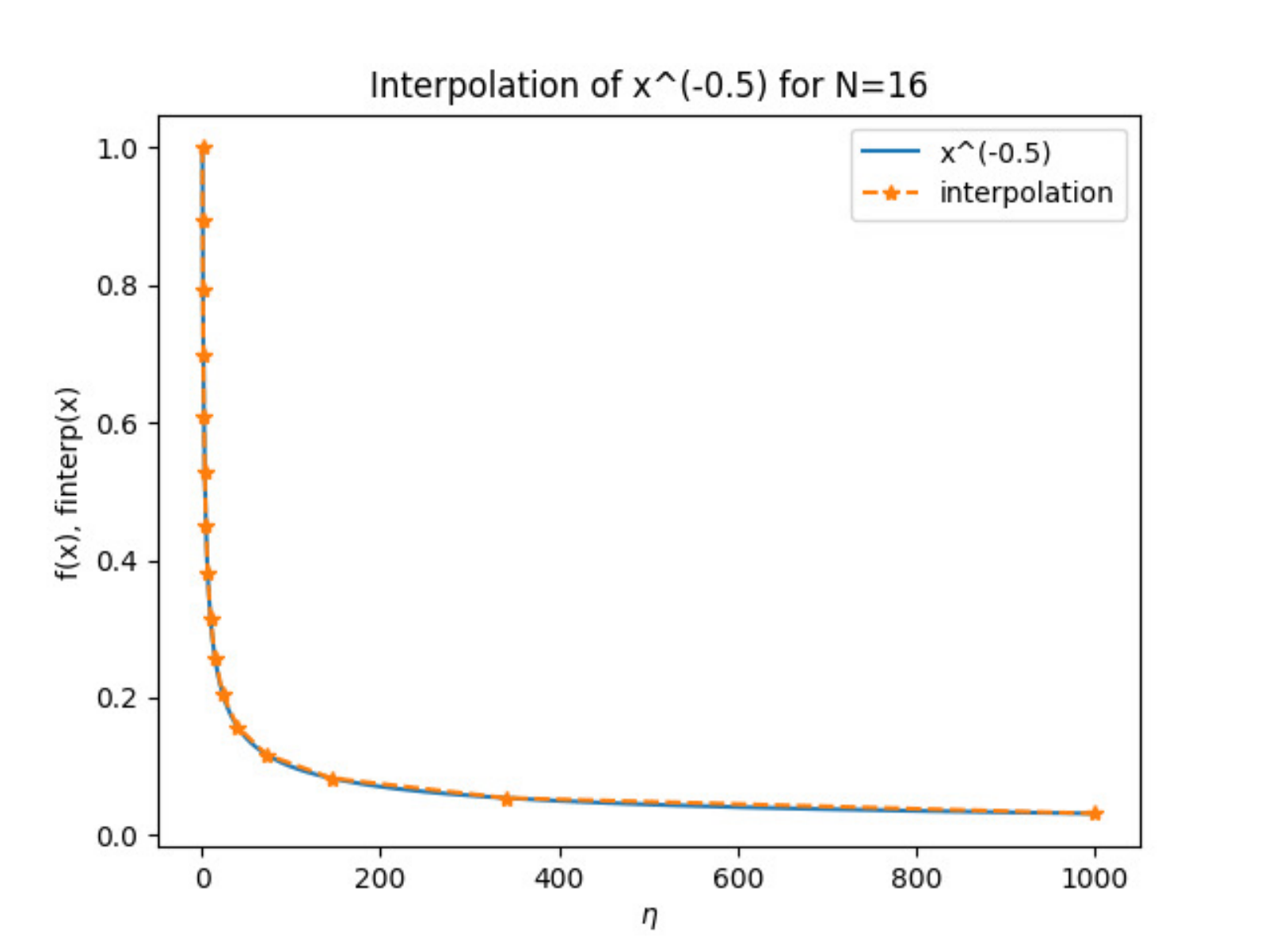} &
    \includegraphics[width=.5\textwidth, trim=0 0 0 0, clip]{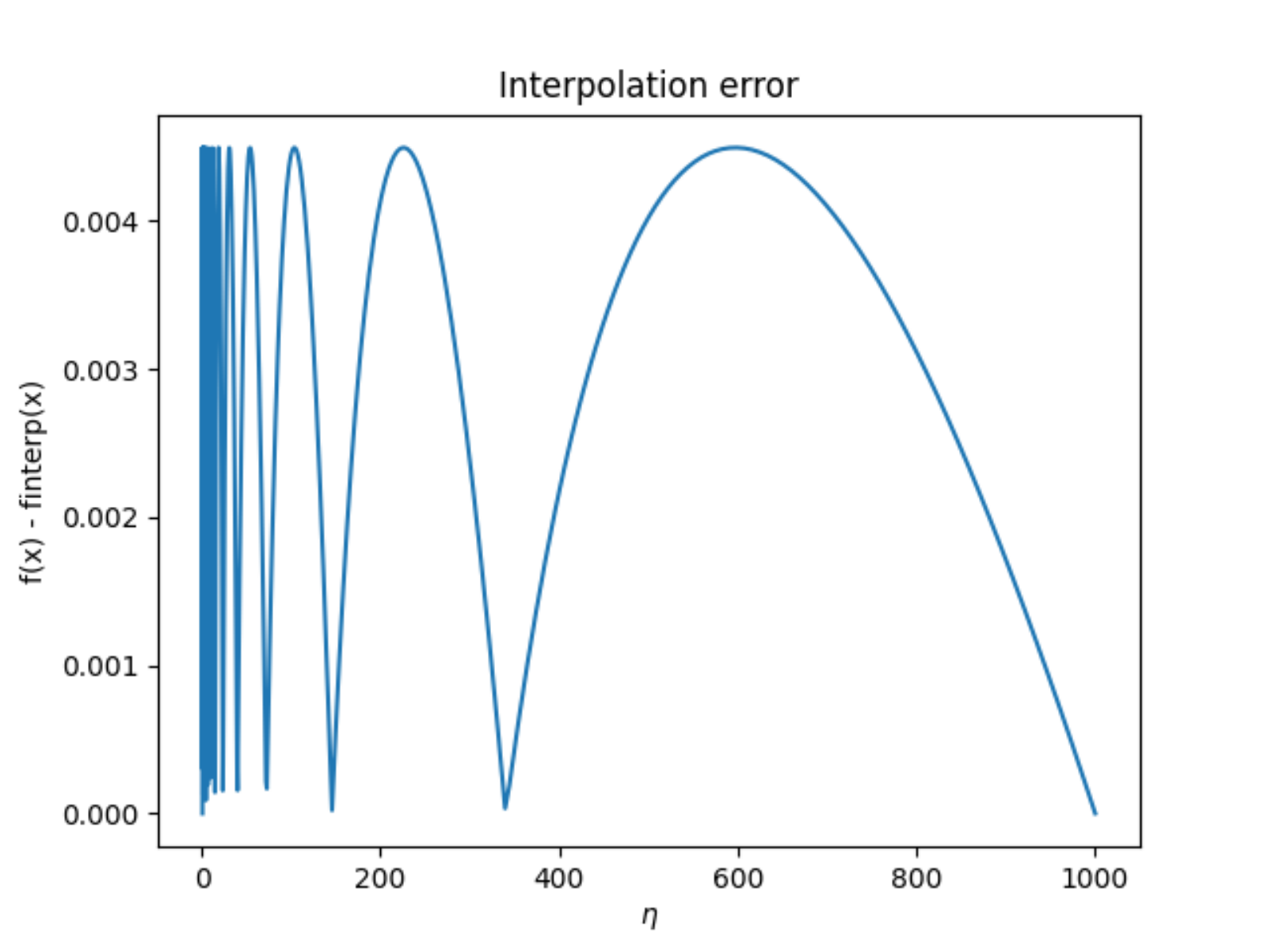}
    \end{tabular}
    \caption{The interpolation nodes $\eta^{(k)}$, $k=1, \dots, 16$ are chosen in such a way that the maximum interpolation error between the function $x^{-0.5}$ and its piecewise linear interpolant attains the same value in each interval $(\eta^{(k)}, \eta^{(k+1)})$.}
    \label{fig_equi_error}
\end{figure}

\subsubsection{Numerical comparison of different models on whole domain} \label{sec_comp_domain_homo}
In Fig. \ref{fig_comparison_oneTrig}, we compared several different models for a fixed triangle in the mesh when the material perturbation parameter is varied between $\lambdaMin=1$ and $\lambdaMax=1000$. Next, we investigate the maximum relative error $\delta \hat{\mathcal J}$ as defined in \eqref{eq_max_eta} of a model $\hat{\mathcal J}$ as a function of the position in space.

 \begin{figure}
    \begin{tabular}{cc}
        $\delta \hat{\mathcal J}_{\text{SMWdiag}}$&
        $\delta \hat{\mathcal J}_{\text{SMWapprox}}$\\
        \includegraphics[width=.45\textwidth, trim=100 0 50 0, clip]{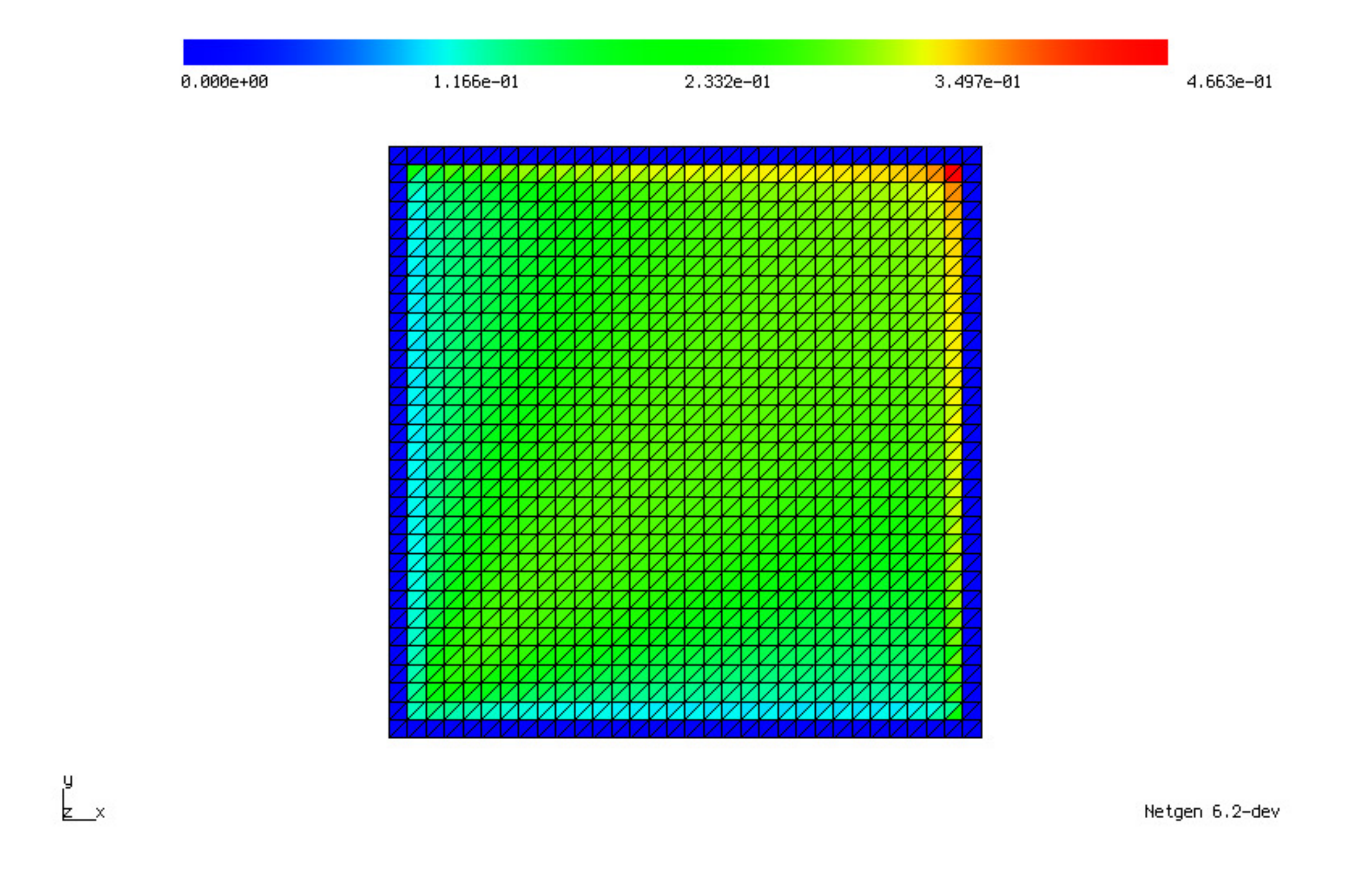} &
        \includegraphics[width=.45\textwidth, trim=100 0 50 0, clip]{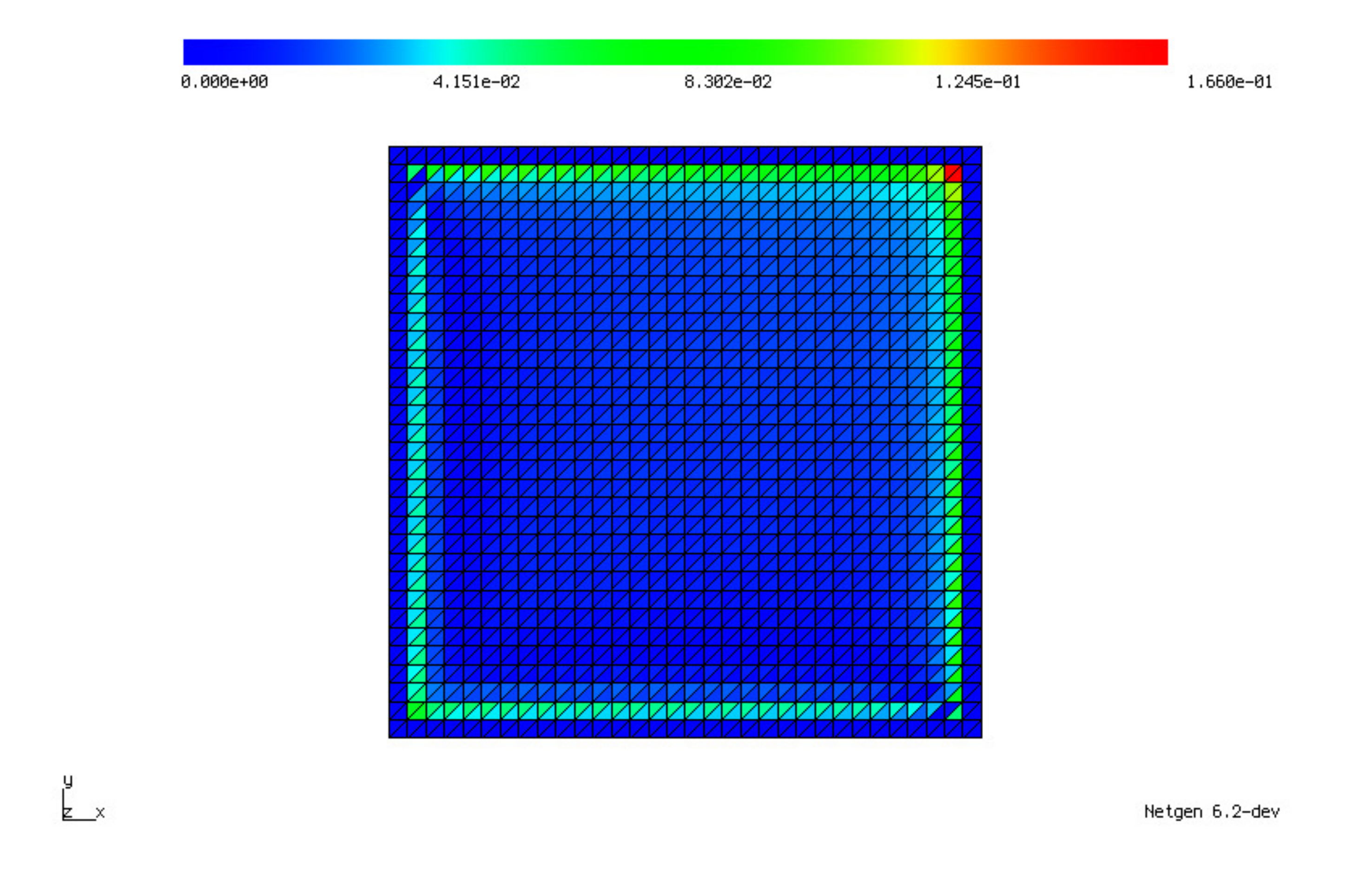} \\
        \includegraphics[width=.45\textwidth, trim=100 0 50 0, clip]{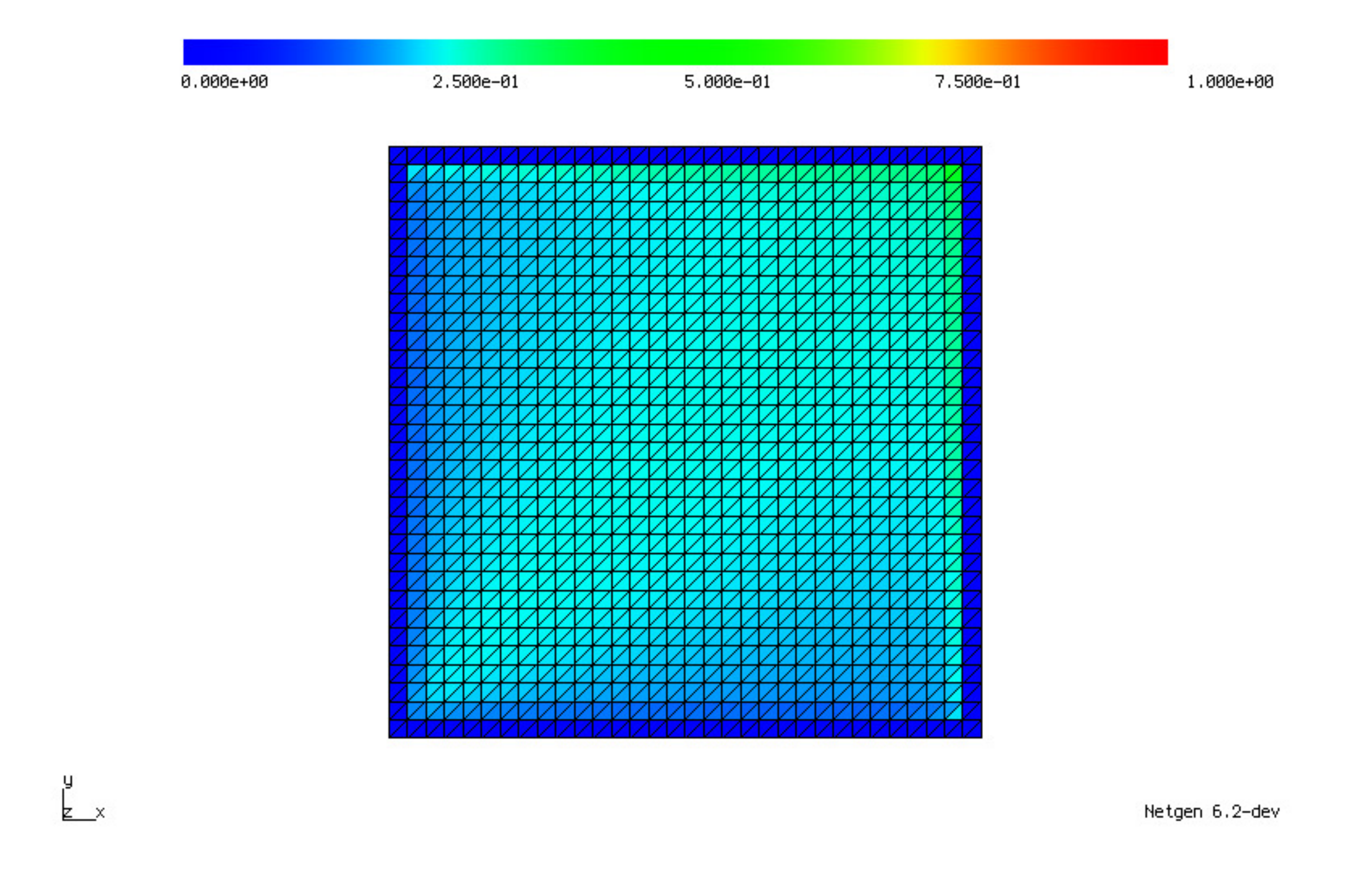} &
        \includegraphics[width=.45\textwidth, trim=100 0 50 0, clip]{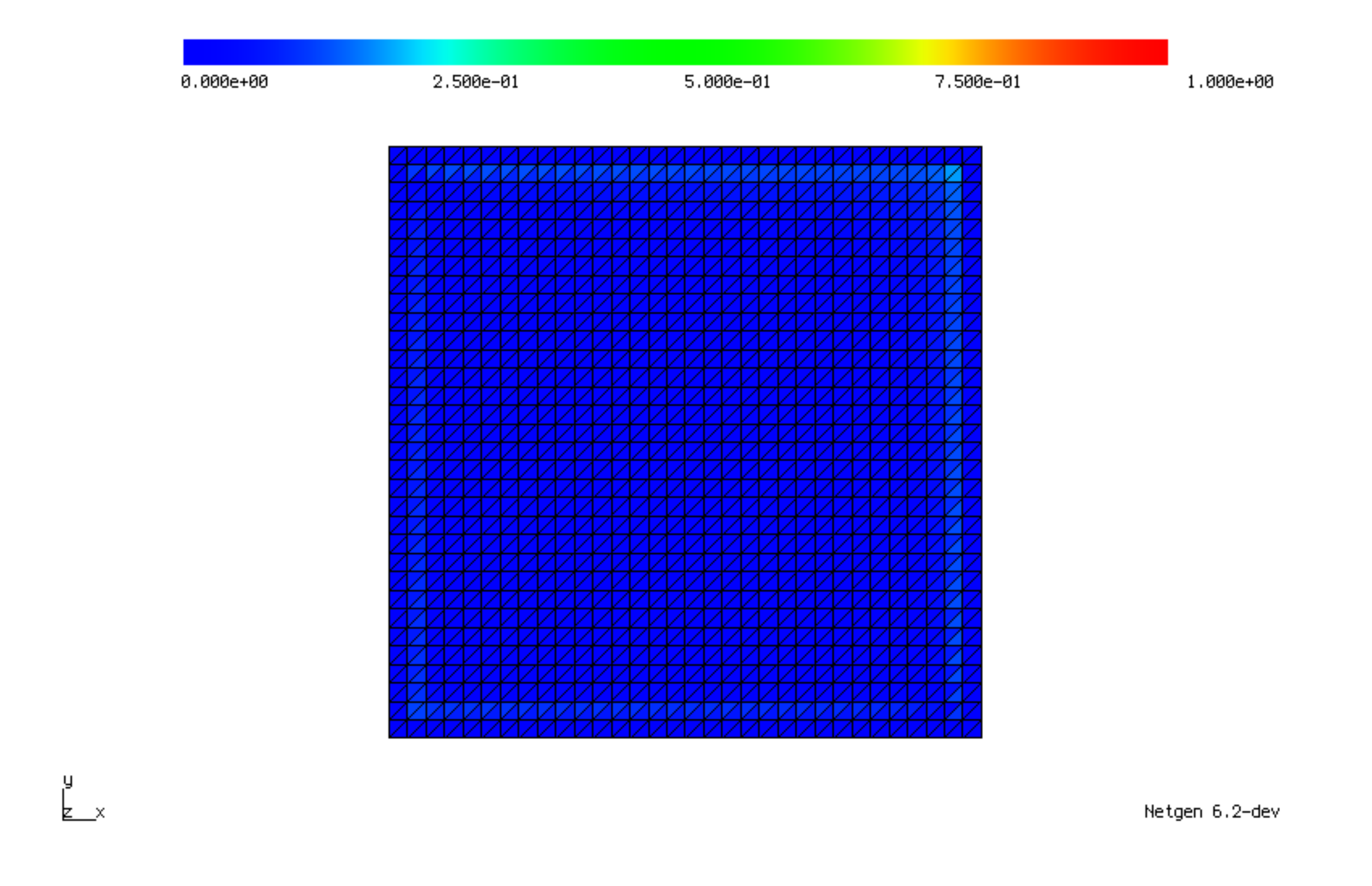} \\
        \includegraphics[width=.45\textwidth, trim=100 0 50 0, clip]{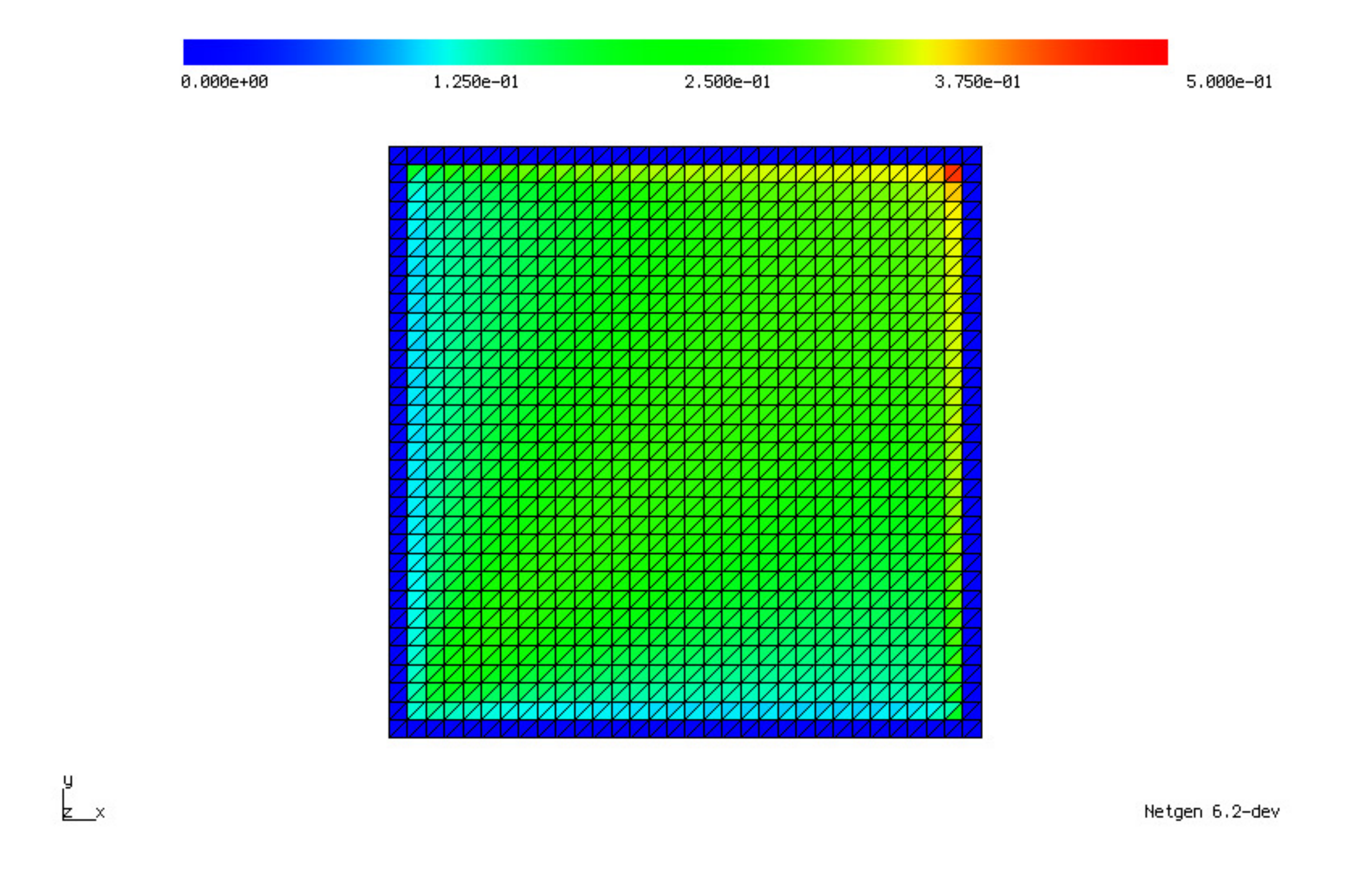} &
        \includegraphics[width=.45\textwidth, trim=100 0 50 0, clip]{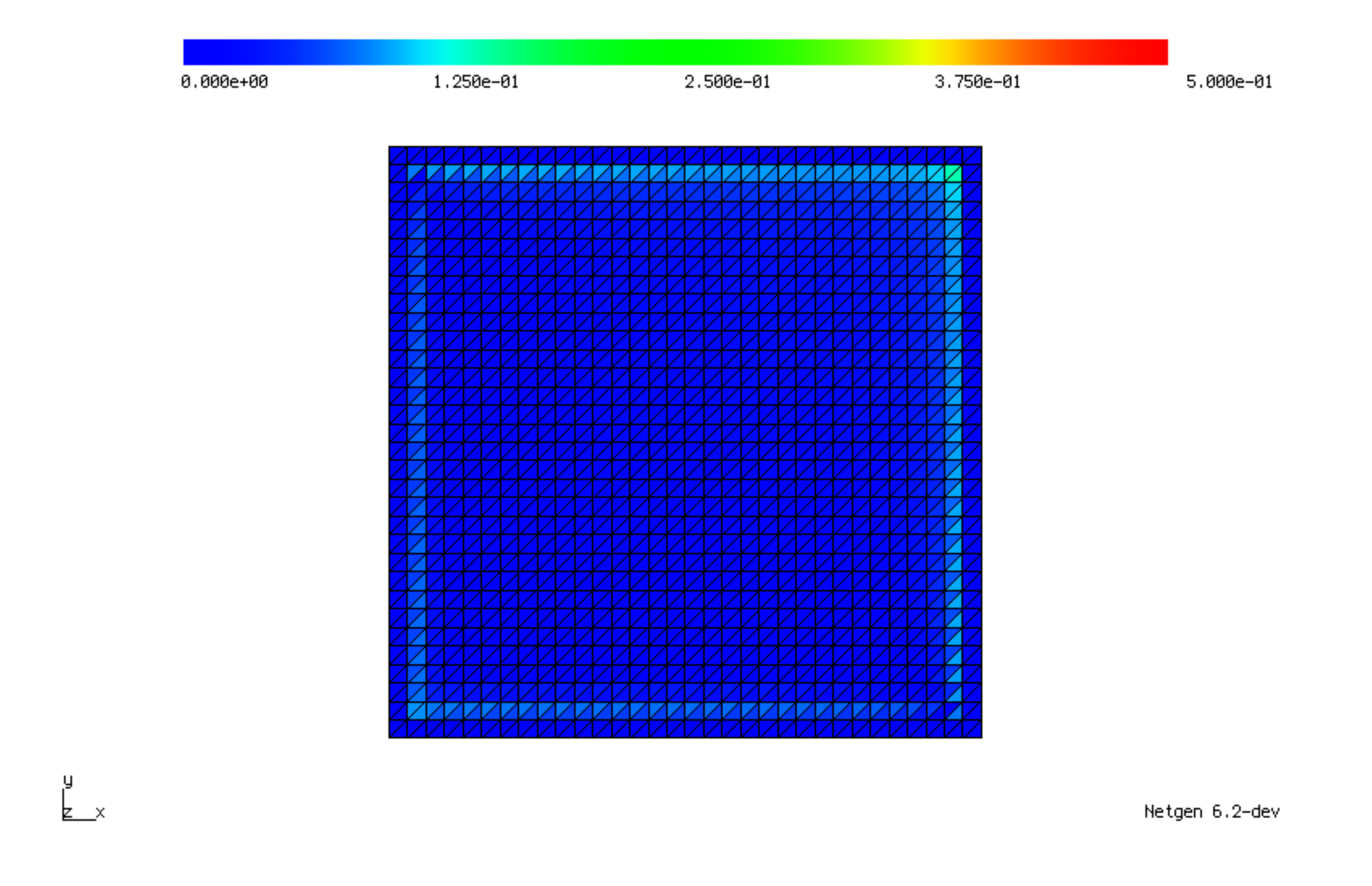} \\
        \includegraphics[width=.45\textwidth, trim=100 0 50 0, clip]{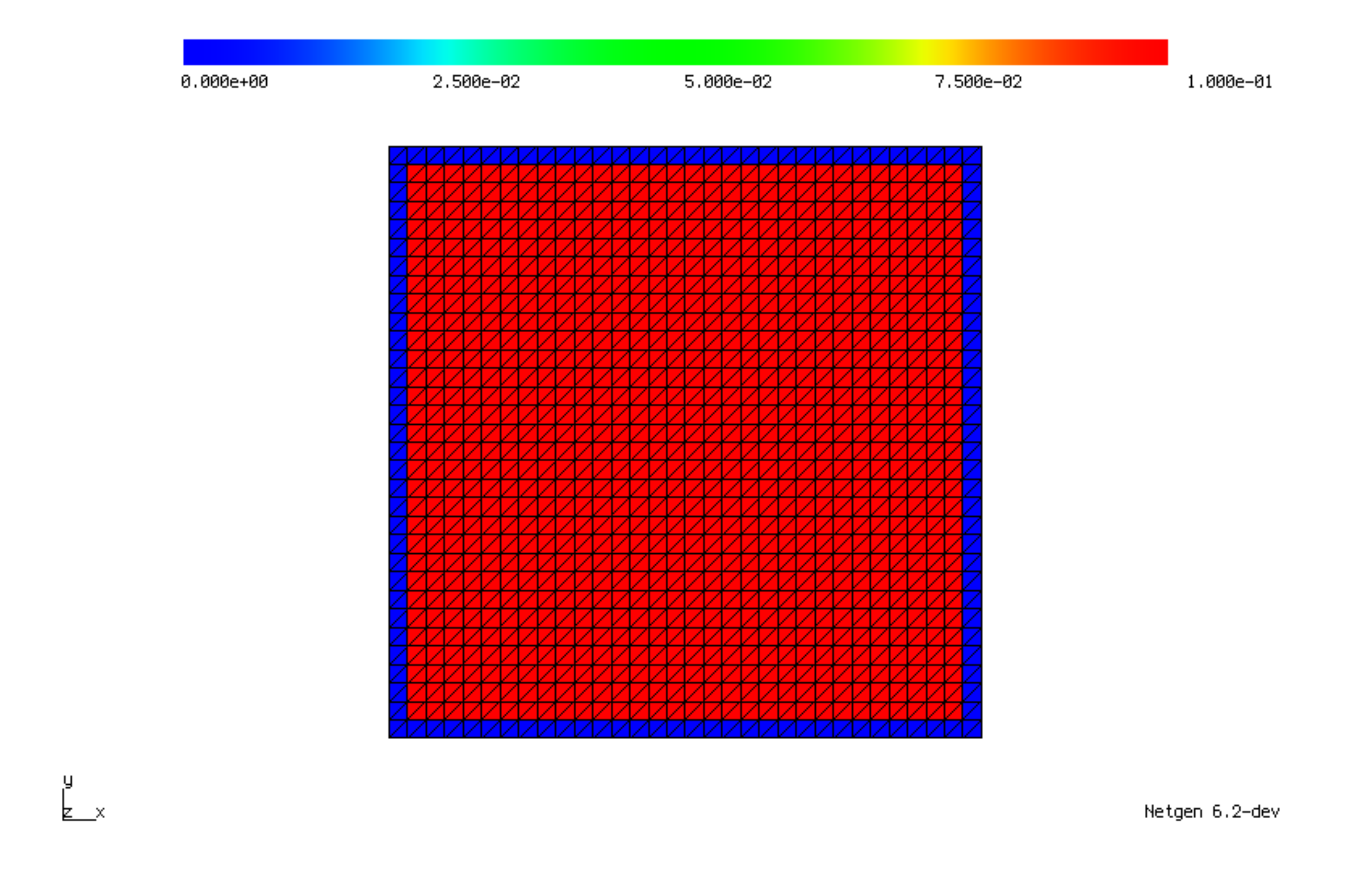} &
        \includegraphics[width=.45\textwidth, trim=100 0 50 0, clip]{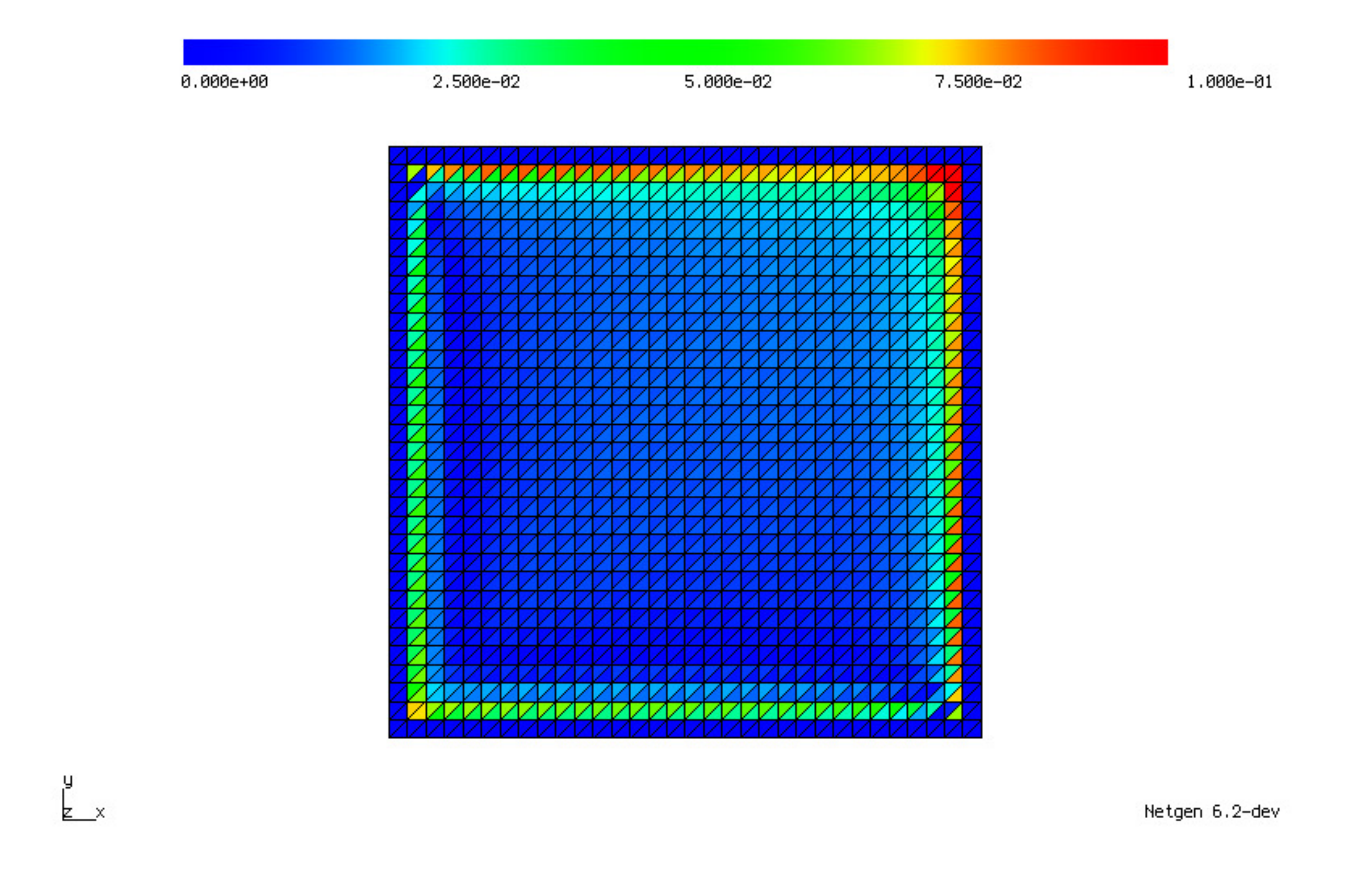}
    \end{tabular}
    \caption{Comparison of relative errors $\delta \hat{\mathcal J}[T_\ell]$ according to \eqref{eq_max_eta} for models $\hat{\mathcal J}_{\text{SMWdiag}}$ \eqref{eq_SMWdiag_model} in left column and $\hat{\mathcal J}_{\text{SMWapprox}}$ \eqref{eq_SMW_W} ($=\hat{\mathcal J}_{\text{TDnum}}$ \eqref{eq_defTDnum_model}) in right column for all interior elements $T_\ell$ in homogeneous setting. First line shows color plot according to their respective maximum errors. Second to fourth line show threshold for maximum relative error at $100 \%$, $50\%$ and $10\%$, respectively. Errors in elements touching the boundary are not computed.}

    \label{fig_errorDomain_homo}
\end{figure}

 Given the findings of Fig. \ref{fig_comparison_oneTrig}, we will focus on the two approximate Sherman-Morrison-Woodbury models $\hat{\mathcal J}_{\text{SMWdiag}}$ and $\hat{\mathcal J}_{\text{SMWapprox}}$ (recall that $\hat{\mathcal J}_{\text{TDnum}}$ coincides with $\hat{\mathcal J}_{\text{SMWapprox}}$).
 In Figure \ref{fig_errorDomain_homo}, we plot the maximum relative errors \eqref{eq_max_eta} for these two models for all interior elements of the computational domain for the homogeneous material distribution $\lambda(x) = \lamOut = 1$, i.e., we are in the setting of Figure \ref{fig_comparison_oneTrig}(a). Elements touching the boundary are discussed separately in Section \ref{sec_elemsBdy}. Here, we can see that the maximum relative error of the model $\hat{\mathcal J}_{\text{SMWdiag}}$ is around $47\%$ whereas it is around $17\%$ for $\hat{\mathcal J}_{\text{SMWapprox}}$. The four different rows of Figure \ref{fig_errorDomain_homo} show different threshold values for the relative errors in the color bars. Moreover, it can be seen from the right column in Fig. \ref{fig_errorDomain_homo} that the model $\hat{\mathcal J}_{\text{SMWapprox}}$ behaves particularly well in the center of the homogeneous domain and that the error increases slightly the closer one gets to a boundary. This was to be expected since the idea of the model $\hat{\mathcal J}_{\text{SMWapprox}}$ in Section \ref{sec_SMW_W} (and equivalently model $\hat{\mathcal J}_{\text{TDnum}}$ of Section \ref{sec_topder}) was to zoom in locally around the fixed element $T_\ell$ and assume that all boundaries are sufficiently far away, cf. Fig. \ref{fig_smwapprox_pert_rescaled} and Fig. \ref{fig_smwapprox_pert_rescaled_h0__trunc}. Nevertheless, the maximum error attained by $\hat{\mathcal J}_{\text{SMWapprox}}$ is still significantly smaller than that of the model $\hat{\mathcal J}_{\text{SMWdiag}}$.

\subsubsection{Towards topology optimization using approximate models}

We want to illustrate the potential of the introduced models $\hat{\mathcal J}_{\text{SMWdiag}}$ and $\hat{\mathcal J}_{\text{SMWapprox}}$ (which coincides with $\hat{\mathcal J}_{\text{TDnum}}$) in the course of a binary topology optimization algorithm. Here, we simply decide for each element $T_\ell$ if it should be occupied by $\lamOut = 1$ or $\lamIn = 1000$ based on the values of a model $\hat{\mathcal J}(\Blam + (\eta-\Blam_\ell)\Be^{(\ell)})$ at $\eta=\lamOut$ and $\eta = \lamIn$, i.e., we do not allow for intermediate material values. Here, we again start out from the homogeneous design where $\Blam \in \mathbb R^m$ is the constant one vector.
As a reference, we consider the separable exact model $\hat{\mathcal J}_{\text{SMW}}$ \eqref{eq_SMWexact}. Due to this model's properties, the material distribution obtained by the mentioned procedure is a local minimum, which cannot be improved by switching the state of only one element. Note that this is a stronger notion of optimality than a design being solely a stationary point of the relaxed optimization problem. From a theoretical point of view it is not entirely clear that using approximations of exact separable models such as $\hat{\mathcal J}_{\text{SMWdiag}}$ the same effect can be achieved. However, already in \cite{NeesEtAl2022} it was reported that the SGP concept combined with an approximation of the $\hat{\mathcal J}_{\text{SMWdiag}}$ type lead to a much better local minimizer for a binary topology optimization problem than the MMA method utilizing convex separable approximations. Here, we investigate this effect in more detail using a selection of the previously suggested models. In addition, we also make a comparison with an MMA model 
\begin{align} \label{eq_model_MMA}
    \hat{\mathcal J}_{\text{MMA}}(\Beta) &= \mathcal J(\Blam) - \sum_{\ell=1}^m |T_\ell| (\Beta_\ell  - \Blam_\ell) (\nabla u_h|_{T_\ell})^\top \left( \BI_2 -\frac{\Beta_\ell - \Blam_\ell}{L-\Blam_\ell} \BI_2 \right)^{-1}\nabla u_h|_{T_\ell}\nonumber \\ 
    & = \mathcal J(\Blam) - \sum_{\ell=1}^m |T_\ell| | \nabla u_h|_{T_\ell} |^2  \frac{(\Beta_\ell  - \Blam_\ell)(\Blam_\ell  - L)}{ (\Beta_\ell  - L)}.
\end{align}
Here $L$ plays the role of a vertical asymptote, which is chosen individually for each element by a heuristic update scheme in the original MMA method, see \cite{Svanberg1987}. As we consider only a single update step here, the heuristic for the choice of $L$ can not be applied. Instead we test three different constant choices of the asymptote, $L=0$, $L=-5$, $L=-10$. 
Note that \eqref{eq_model_MMA} can be obtained from \eqref{eq_SMWexactGam} by replacing $\BGam^{(\ell)}$ by $\frac{1}{L-\Blam_\ell} \BI_2$. 

Since, as it is well-known, the optimum material design for compliance minimization without limitation on the volume is the full design, we here include a simple volume penalization in the cost function and use the augmented cost function
\begin{align} \label{eq_augmentedCost}
    \mathcal L(\Blam):= \mathcal J(\Blam) + \omega \text{Vol}(\Blam)
\end{align}
with a fixed weight $\omega = 7.5$ and the volume of the strong material $\text{Vol}(\Blam) = \sum_{\ell =1}^m  |T_\ell| (\Blam_\ell - \lamOut) / (\lamIn - \lamOut)$. Note that $\text{Vol}(\Blam)$ itself is a separable function which can be dealt with without approximation error. Figure \ref{fig_towardsTopOpti} shows the designs obtained after one step of the procedure mentioned above when using (a) the exact (but expensive) model $\hat{\mathcal J}_{\text{SMW}}$, (b) the diagonal approximation $\hat{\mathcal J}_{\text{SMWdiag}}$, (c) the proposed model $\hat{\mathcal J}_{\text{SMWapprox}}$ and (d)--(f) the MMA model with $L=0$, $L=-5$ and $L=-10$. Comparing pictures (b) and (c) to (a), we see that the error in the design produced by the model in (c) is almost zero, whereas it is a bit larger for the diagonal approximation model in (b). The performance of the MMA model here depends heavily on the choice of the parameter $L$. For illustration of the method, we also plotted the curves corresponding to the exact and the two mentioned approximate models in three fixed elements. Figure \ref{fig_towardsTopOpti}(g) shows that, in the leftmost of the three highlighted elements in (a)--(f), the value of all six considered models at $\lamIn=1000$ is higher than at $\lamOut = 1$, thus making a switching of the material from $\lamOut$ to $\lamIn$ unattractive. In the same way, in the rightmost of the three marked elements, all models except for MMA with $L=0$ show smaller values at $\lamIn$ than at $\lamOut$, thus suggesting switching the material to decrease the cost function, see Fig. \ref{fig_towardsTopOpti}(i). In the central one out of these three elements, however, the diagonal approximation model $\hat{\mathcal J}_{\text{SMWdiag}}$ suggests to switch the material since its value is smaller at $\lamIn$ than at $\lamOut$, whereas the exact model as well as the proposed approximation $\hat{\mathcal J}_{\text{SMWapprox}}$ suggest not to switch it, see Fig. \ref{fig_towardsTopOpti}(h). The MMA model shows good behavior for the choice $L=-5$, but large errors for the choices $L=0$ and $L=-10$. Finally, we remark that, when comparing with the exact model $\hat{\mathcal J}_{\text{SMW}}$, for $\hat{\mathcal J}_{\text{SMWdiag}}$ the wrong decision was taken for 280 out of 1800 interior elements whereas this was the case only for 19 elements in the case of $\hat{\mathcal J}_{\text{SMWapprox}}$. For the MMA model with $L=0$, $L=-5$, $L=-10$, the numbers of wrongly switched elements were 930, 102 and 586 elements, respectively. 

\begin{figure}
    \centering
    \begin{tabular}{ccc} 
    \includegraphics[width=.33\textwidth, trim=250 0 250 0, clip]{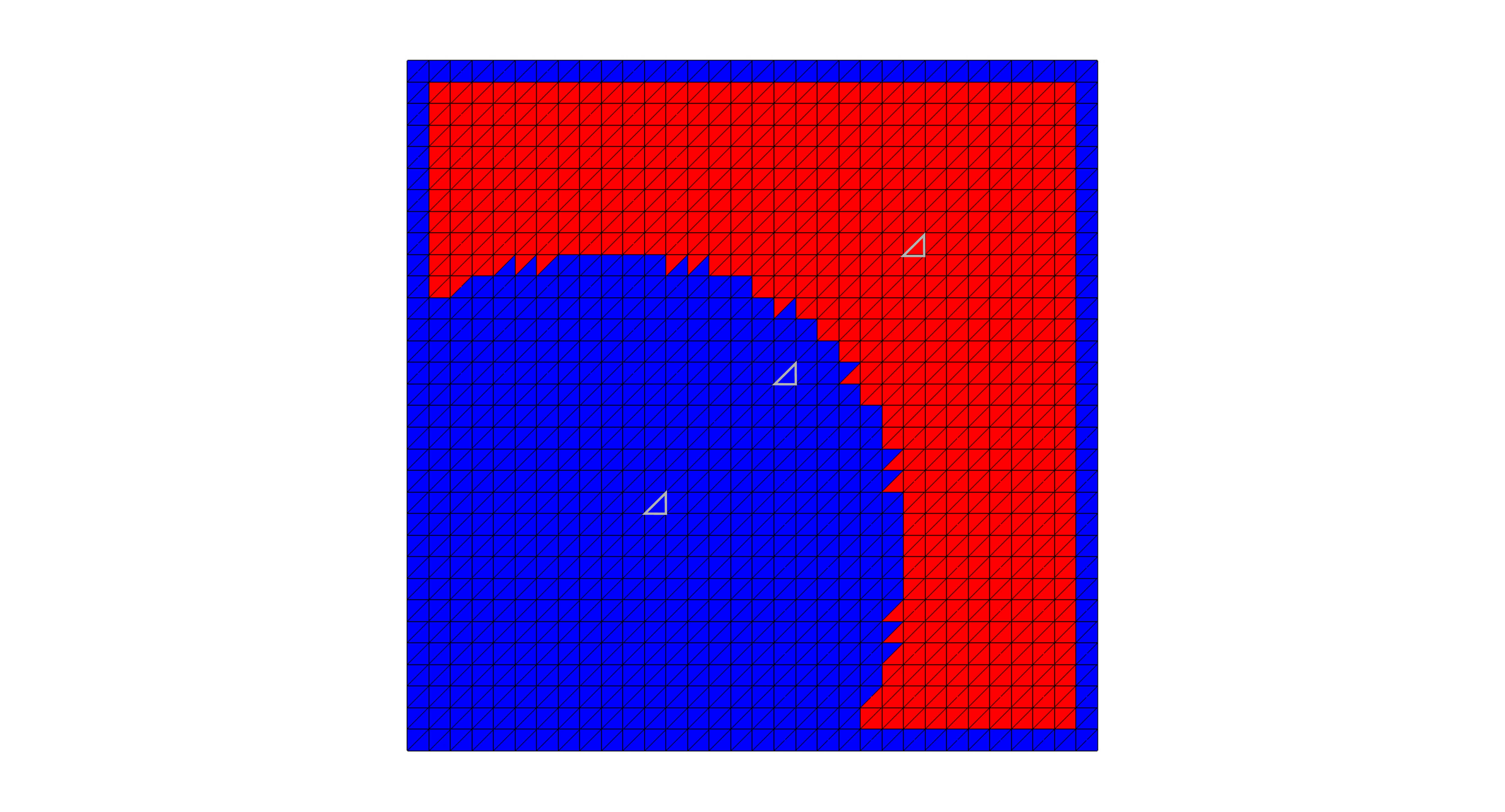}
    &\includegraphics[width=.33\textwidth, trim=250 0 250 0, clip]{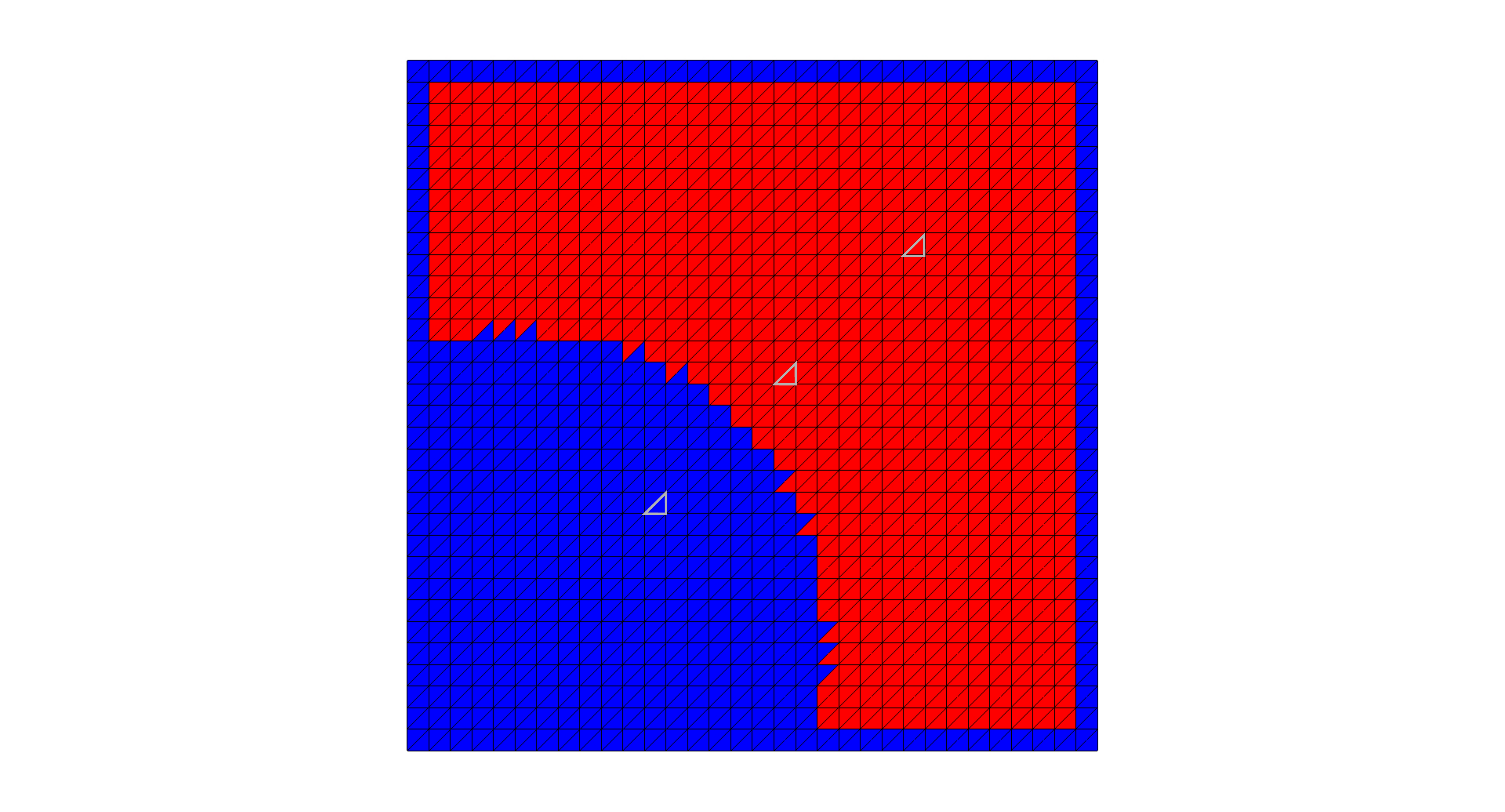}
    &\includegraphics[width=.33\textwidth, trim=250 0 250 0, clip]{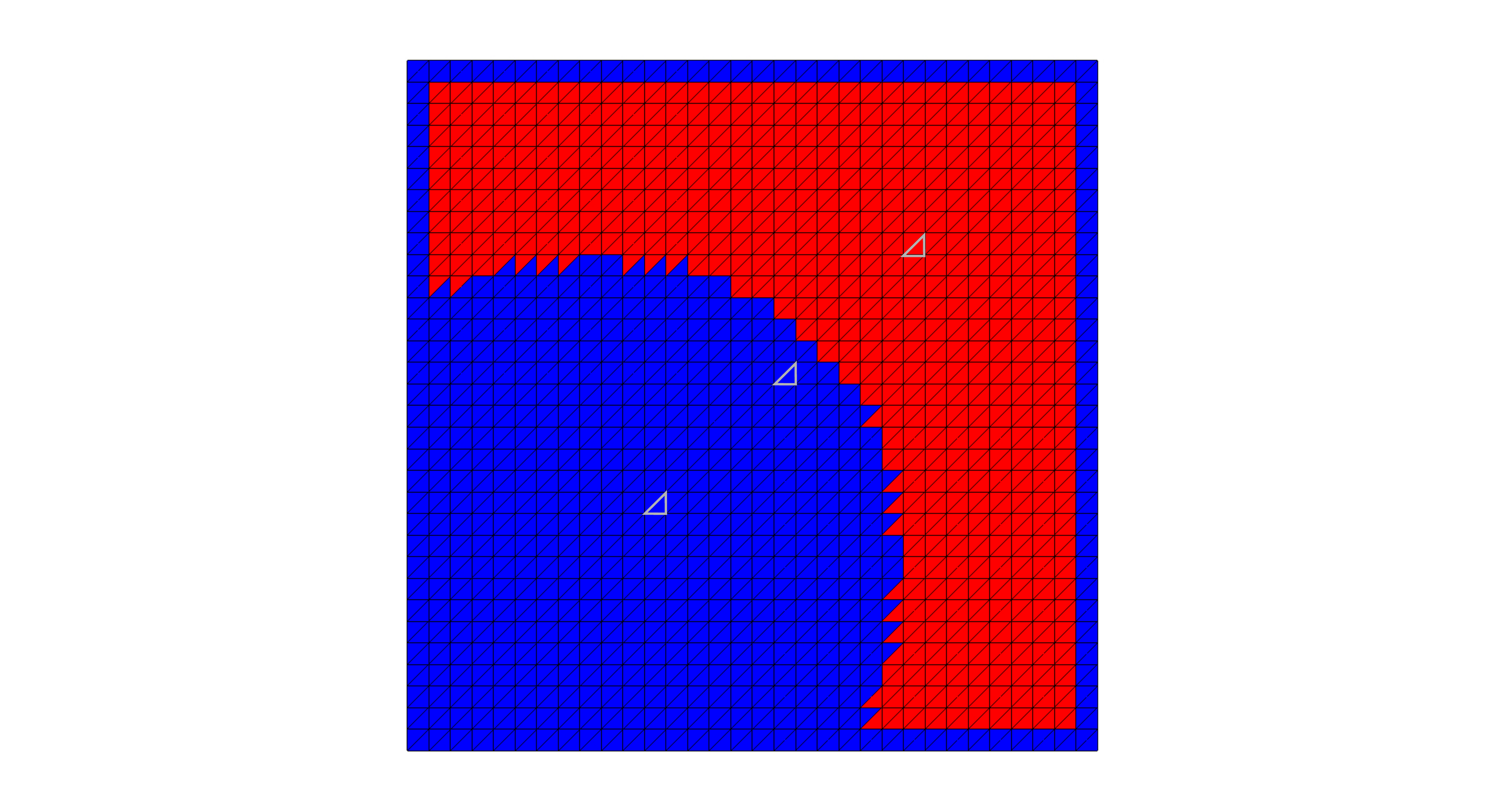} \\
    (a) & (b) & (c) \\
    \includegraphics[width=.33\textwidth, trim=250 0 250 0, clip]{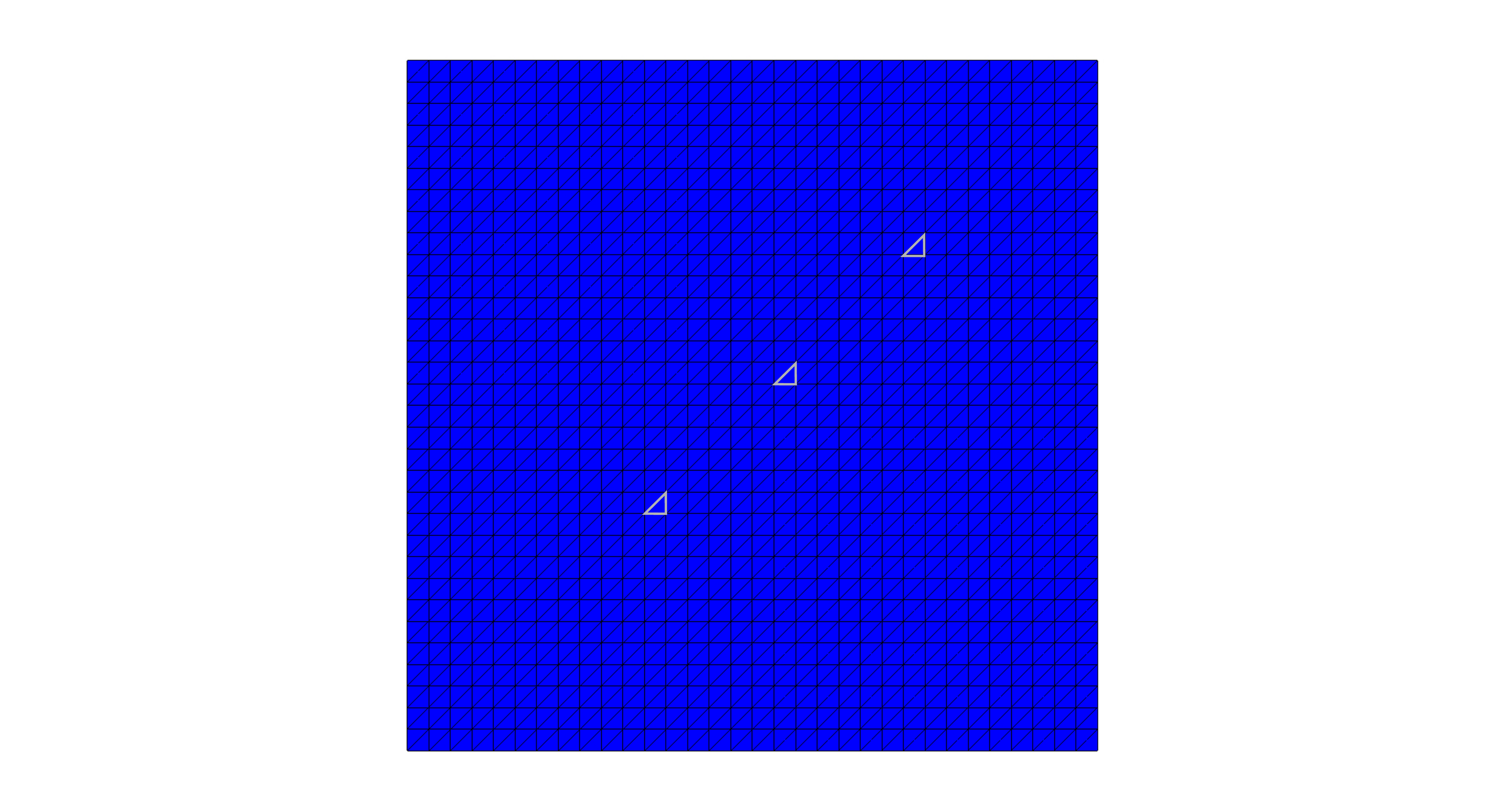}
    &\includegraphics[width=.33\textwidth, trim=250 0 250 0, clip]{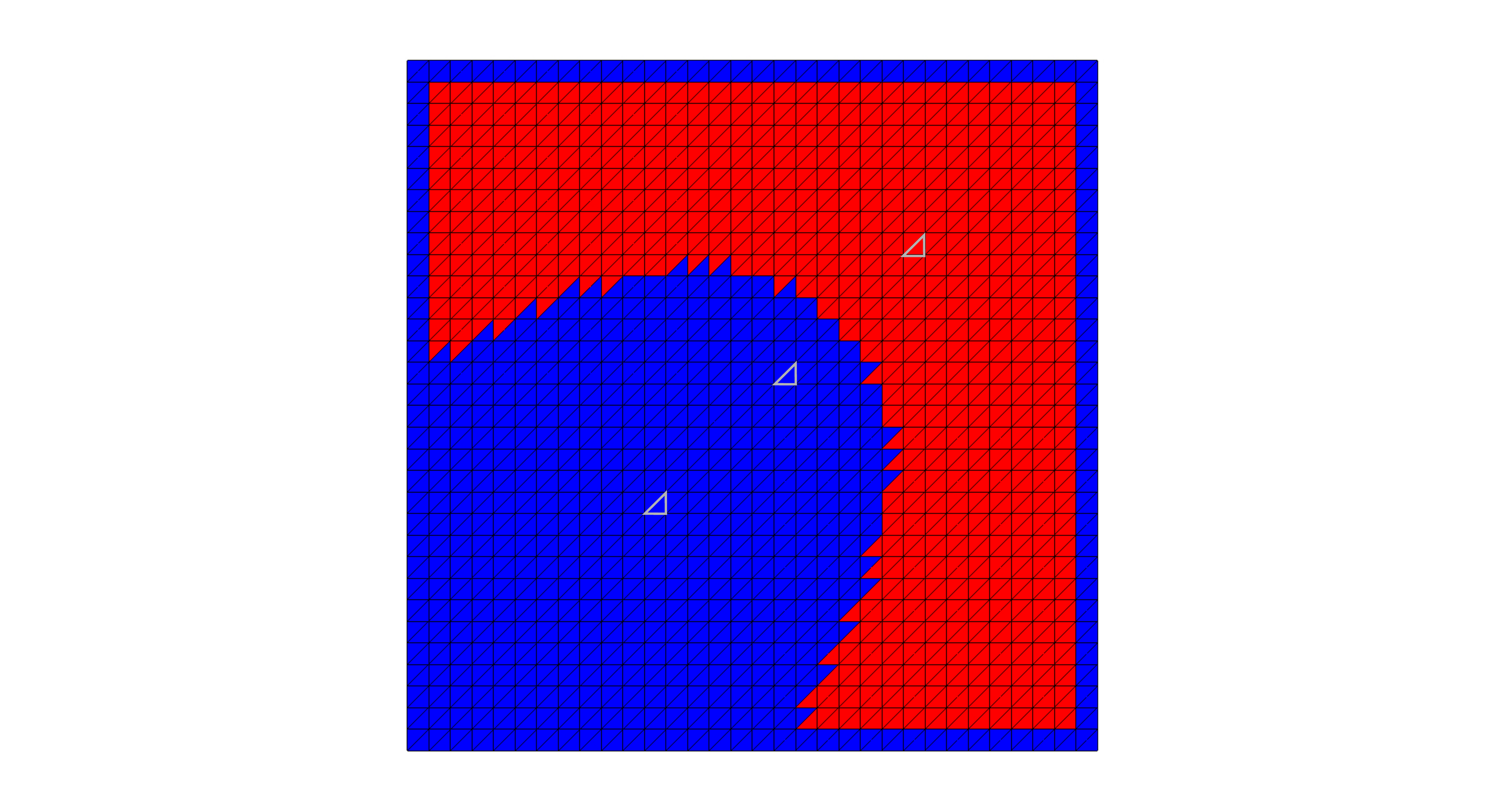}
    &\includegraphics[width=.33\textwidth, trim=250 0 250 0, clip]{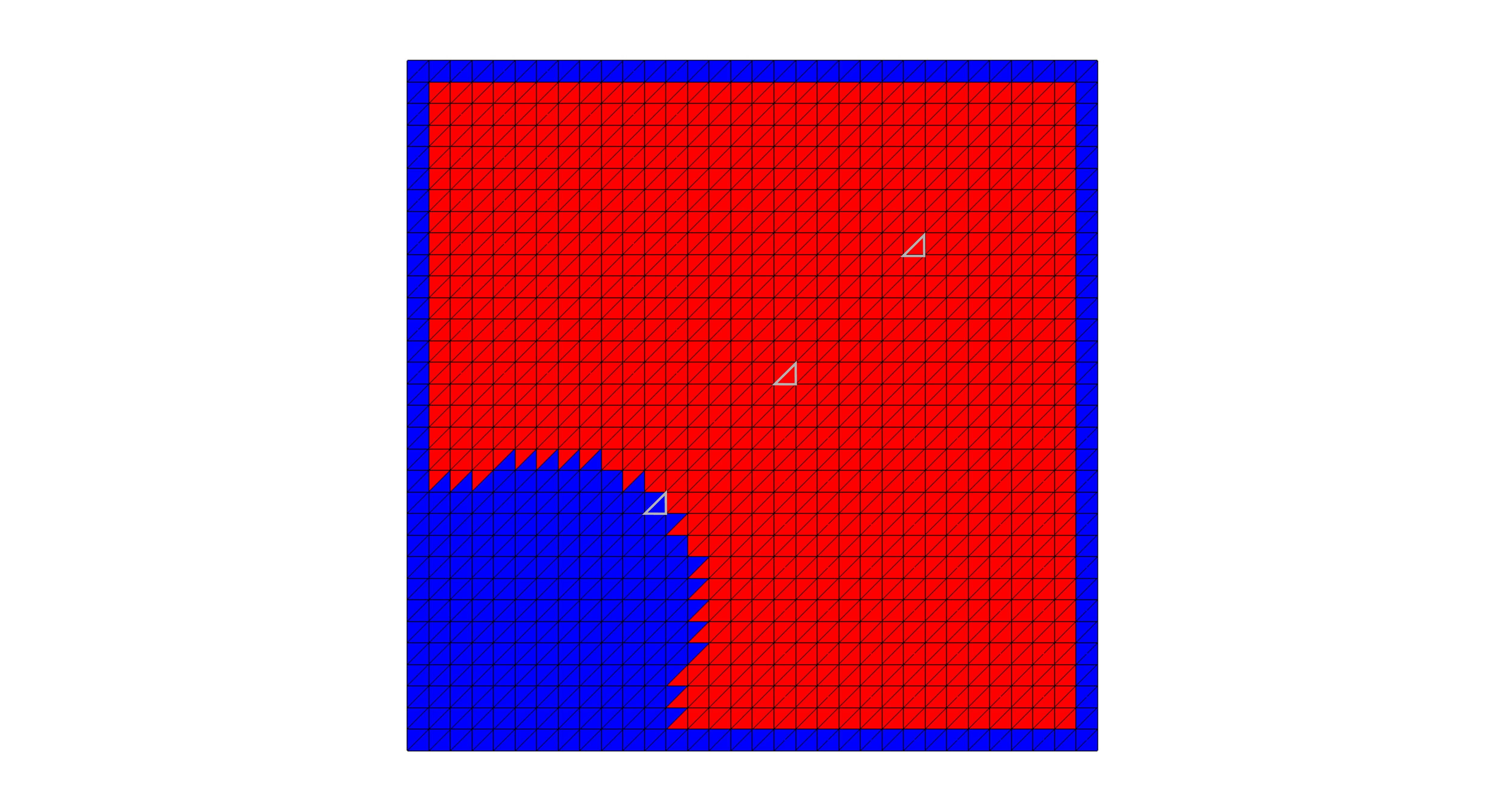} \\
    (d) & (e) & (f) \\
    \includegraphics[width=.3\textwidth]{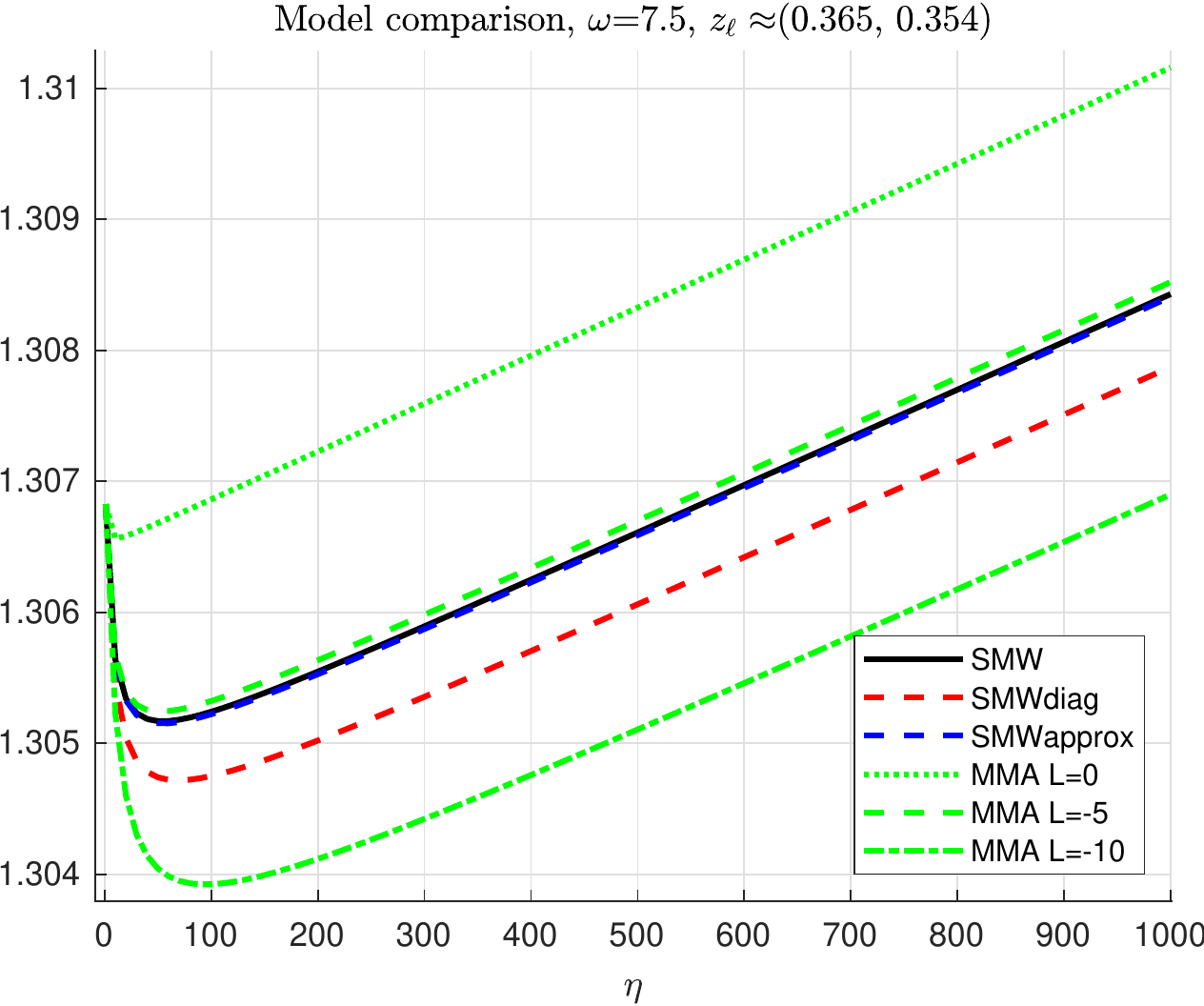} &
    \includegraphics[width=.3\textwidth]{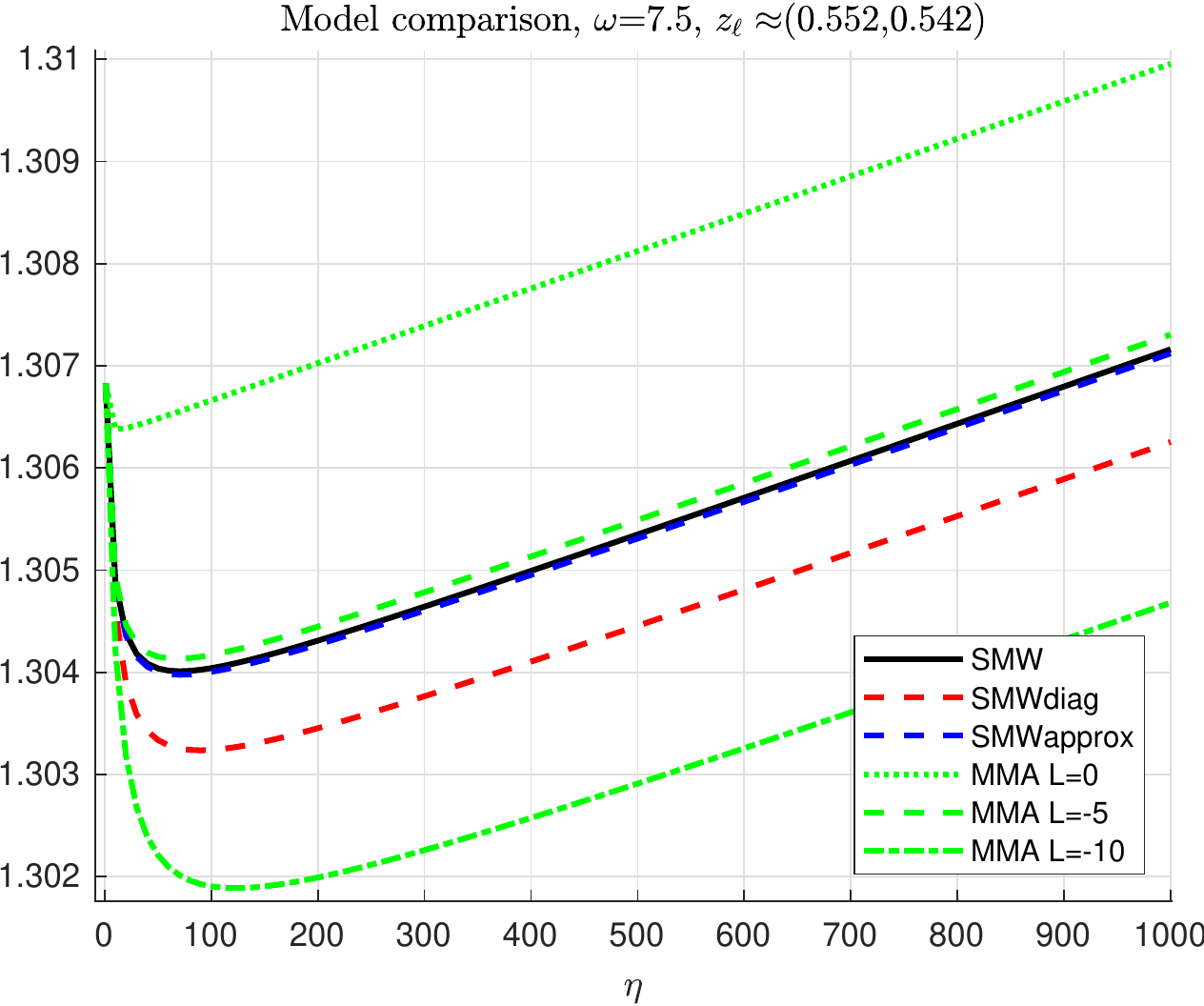} &
    \includegraphics[width=.3\textwidth]{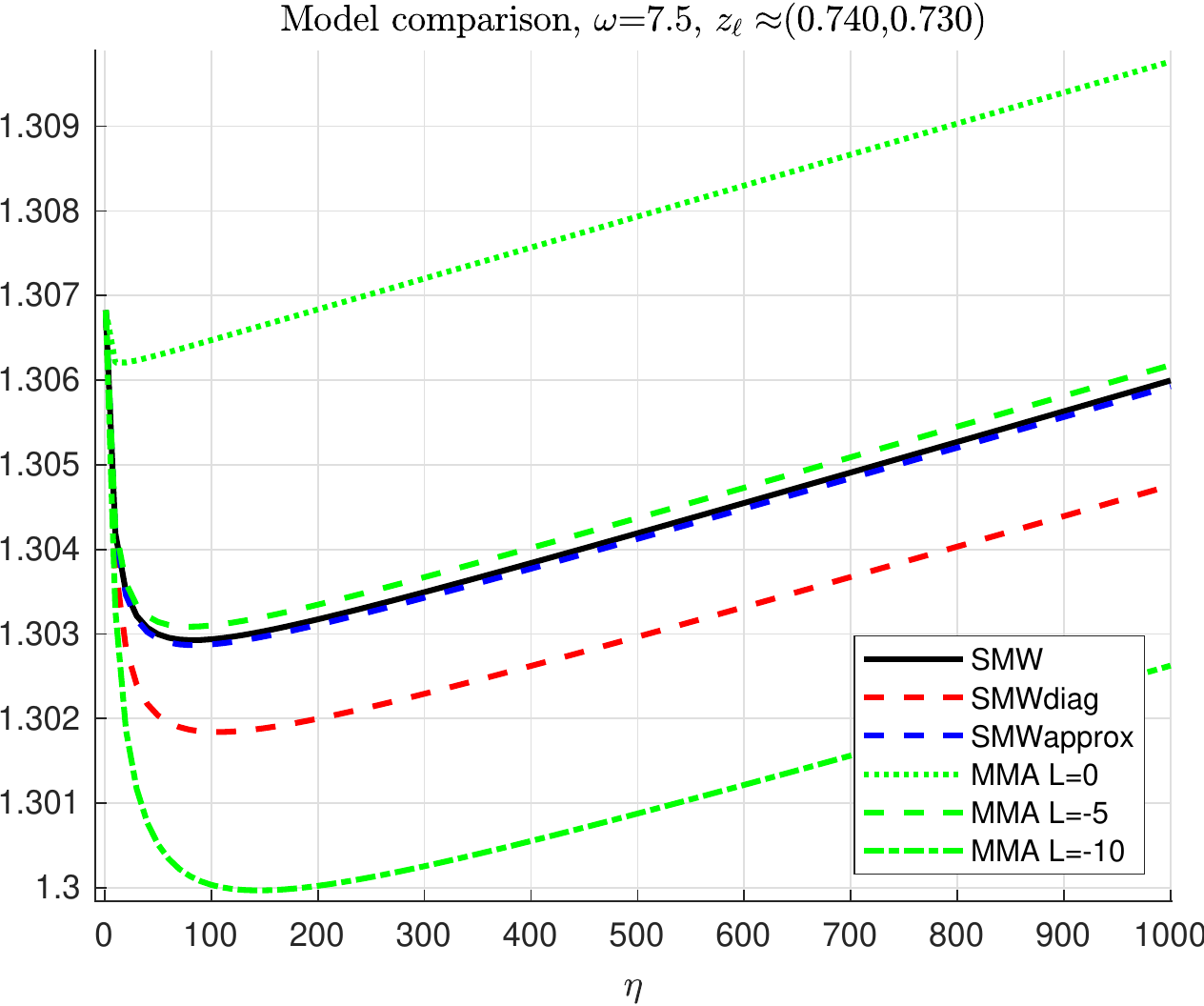} \\
    (g) & (h) & (i) 
    \end{tabular}
    \caption{Top and central row: Material distribution after one step of binary topology optimization for \eqref{eq_augmentedCost} with $\omega = 7.5$ starting out from homogeneous material $\lambda(x) = 1$ (see Fig. \ref{fig_design_solution_homo}(a)) when using (a) exact compliance model $\hat{\mathcal J}_{\text{SMW}}$, (b) diagonal approximation to Sherman-Morrison-Woodbury model $\hat{\mathcal J}_{\text{SMWdiag}}$, (c) approximate compliance model $\hat{\mathcal J}_{\text{SMWapprox}}$, (d) MMA model with $L=0$, (e) MMA model with $L=-5$, (f) MMA model with $L=-10$. Bottom row: Illustration of local models in three triangles marked in top row from bottom left (g) to top right (i).}
    \label{fig_towardsTopOpti}
\end{figure}

\subsection{Inhomogeneous material distribution} \label{sec_num_inhomo}
Next we consider a numerical example with an inhomogeneous material distribution as it may appear in the course of a density-based topology optimization algorithm, which is the motivation for this work. We consider a material coefficient $\lambda(x)$ that continuously varies between $\lambdaMin = 1$ and $\lambdaMax = 1000$ as
\begin{align*}
    \lambda(x) = \begin{cases}
                    \lambdaMin, & |x-m| \geq r_2, \\
                    \lambdaMin +\frac{|x-m|-r_1}{r_2 - r_1}(\lambdaMax - \lambdaMin), & |x-m| \in( r_1, r_2), \\
                    \lambdaMax, & |x-m| \leq r_1,
                 \end{cases}
\end{align*}
with $r_1 = 0.15$, $r_2 = 0.35$ and $m = (0.5, 0.5)^\top$. The material distribution and the corresponding finite element solution of \eqref{eq_probpde} with the data defined in the beginning of Section \ref{sec_numExp} are depicted in Fig. \ref{fig_design_solution_inhomo}.

\begin{figure}
    \begin{tabular}{cc}
        \includegraphics[width=.5\textwidth, trim=100 0 50 0, clip]{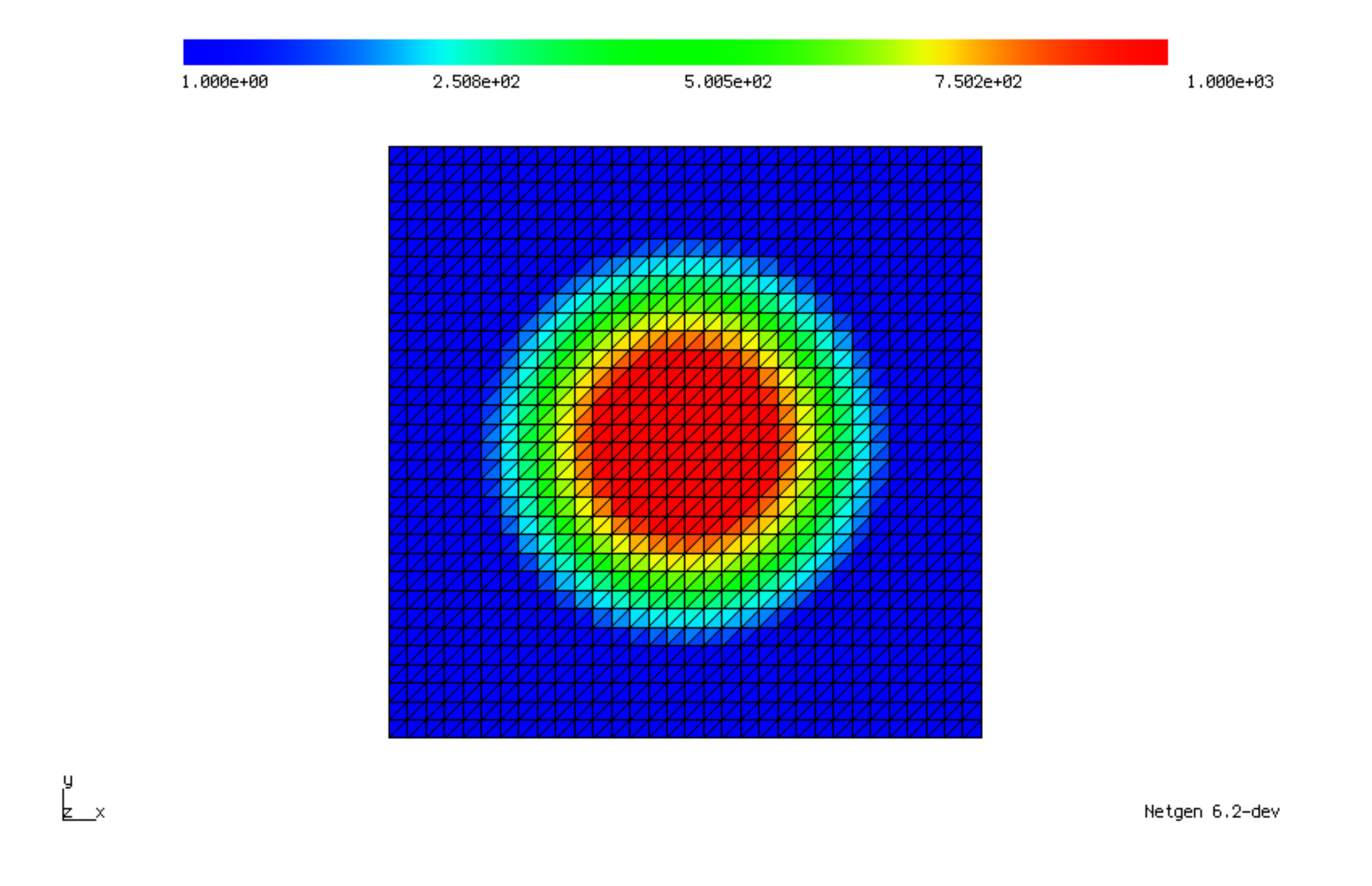} &
        \includegraphics[width=.5\textwidth, trim=100 0 50 0, clip]{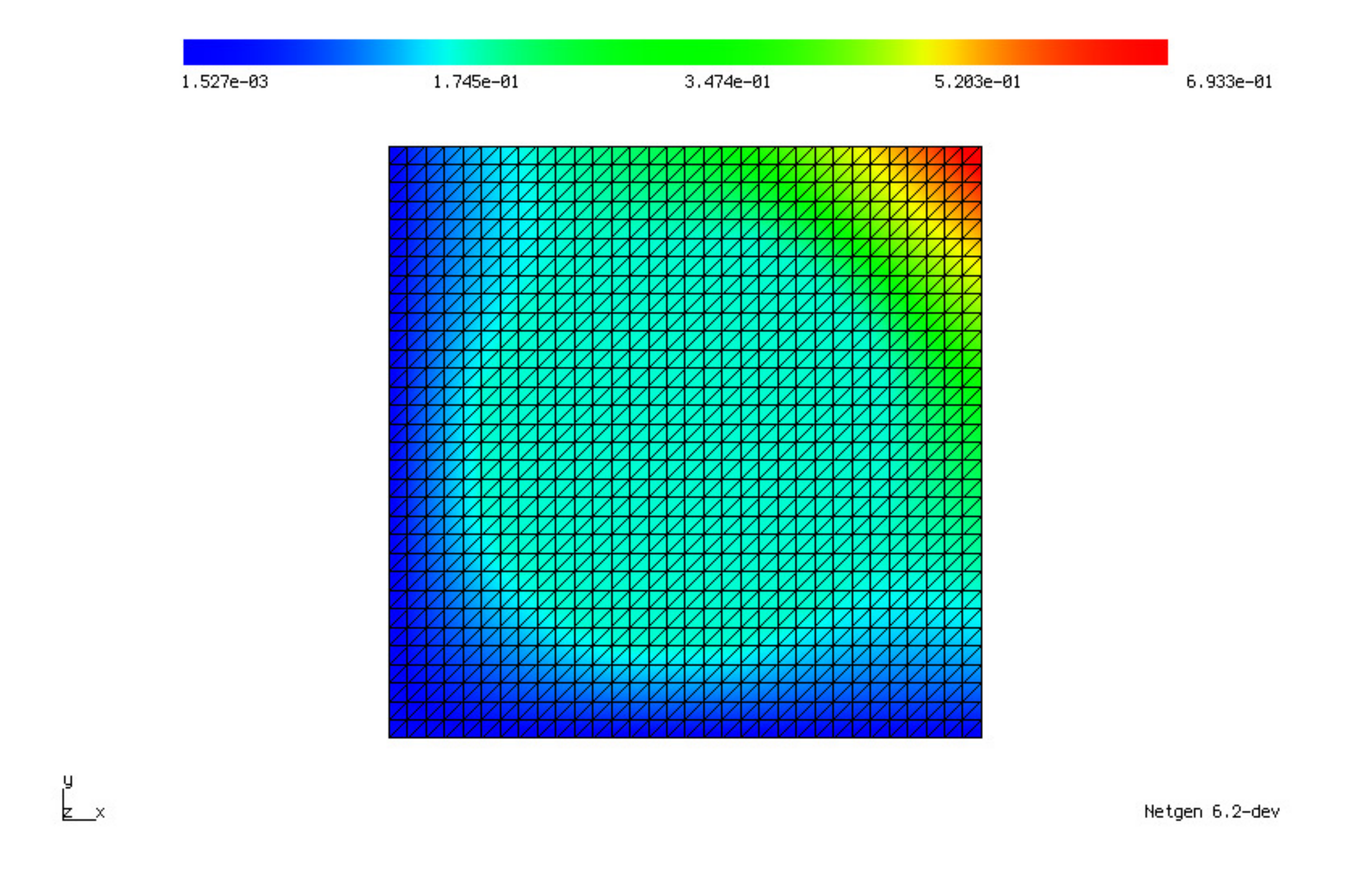} \\
        (a) & (b)
    \end{tabular}
    \caption{(a) Material coefficient $\lambda$ for inhomogeneous setting. (b) Finite element solution $u_h$ of problem \eqref{eq_probpde} with data specified in Section \ref{sec_numExp} for inhomogeneous material distribution.}
    \label{fig_design_solution_inhomo}
\end{figure}

\subsubsection{Numerical evaluation of $\hat{\mathcal J}_{\text{SMWapprox}}$ in inhomogeneous setting}
In order to evaluate $\hat{\mathcal J}_{\text{SMWapprox}}$ in inhomogeneous regions of the computational domain, recall that model \eqref{eq_SMW_W} involves the matrix $\BGam_{\hat T, \ell} = \BGam_{\hat T}[(\Blam_\ell, \lambda_{T_\ell}^{S_1}, \lambda_{T_\ell}^{S_2}, \lambda_{T_\ell}^{S_3})]$ defined in \eqref{eq_Gamma_approx} where $ \lambda_{T_\ell}^{S_1}, \lambda_{T_\ell}^{S_2}, \lambda_{T_\ell}^{S_3}$ are averaged material coefficients according to \eqref{eq_lambdaSj}, see also Fig. \ref{fig_illustrateAvg}. The matrix $\BGam_{\hat T, \ell}$, in turn, is based on the solution to the truncated transmission problem \eqref{eq_W_That_inhomo} for the given averaged material values. Thus, in order to evaluate $\hat{\mathcal J}_{\text{SMWapprox}}$, that exterior problem would have to be solved for the averaged material values of every single element, which would make the model computationally intractable.

For that reason, we introduce another approximation step: We precompute the matrix $\BGam_{\hat T}[(\Blam_\ell, \lambda_{T_\ell}^{S_1}, \lambda_{T_\ell}^{S_2}, \lambda_{T_\ell}^{S_3})]$ for a large discrete set of combinations of material coefficients in an offline phase. More precisely, we precompute the matrix $\BGam_{\hat T}[(\Blam_\ell, \lambda_{T_\ell}^{S_1}, \lambda_{T_\ell}^{S_2}, \lambda_{T_\ell}^{S_3})]$ for all combinations of tuples $(\Blam_\ell, \lambda_{T_\ell}^{S_1}, \lambda_{T_\ell}^{S_2}, \lambda_{T_\ell}^{S_3}) \in \{\eta^{(1)} ,  \dots, \eta^{(N)} \}^4$ where $\lambdaMin = \eta^{(1)} < \dots < \eta^{(N)}=\lambdaMax$ and for the two types of elements $\hat T = \hat T^{(1)}$ and  $\hat T = \hat T^{(2)}$, cf. Fig. \ref{fig_mesh_elTypes}. Since \eqref{eq_W_That_inhomo} has to be solved for $k=1,2$, this yields a total of $4 N^4$ finite element solutions of truncated transmission problems. In the online phase, given averaged values $(\Blam_\ell, \lambda_{T_\ell}^{S_1}, \lambda_{T_\ell}^{S_2}, \lambda_{T_\ell}^{S_3}) \notin \{\eta^{(1)} ,  \dots, \eta^{(N)} \}^4$, the matrix $\BGam_{\hat T}[(\Blam_\ell, \lambda_{T_\ell}^{S_1}, \lambda_{T_\ell}^{S_2}, \lambda_{T_\ell}^{S_3})]$ is approximated by piecewise linear interpolation of the precomputed data.

In our experiments, in order to numerically approximate \eqref{eq_W_That_inhomo}, we used the moderately large value $R=30$ for the radius of the computational domain and discretized it by a mesh consisting of about 4400 triangular elements and about 2300 vertices. We chose $N=16$ material points between $\lambdaMin = 1$ and $\lambdaMax = 1000$ which were chosen as stated in Table \ref{tab_valuesEtak16}. Since increasing the number of material points $N$ will drastically increase the precomputation time, the concrete choice of these points is of big importance. Thus, the points were chosen such that the interpolation error that is made in the online phase is as small as possible, see Remark \ref{rem_equiError}.
The total precomputation time for this setting was about two hours on a single core.
Note that, for given PDE constraint, discretization method and material catalogue $\{\eta^{(1)},\dots, \eta^{(N)}\}$, this precomputation step has to be performed only once and can henceforth be used in all optimization runs.

\subsubsection{Numerical comparison of models on computational domain} \label{sec_comp_domain_inhomo}
We make the same comparison of the two most promising models $\hat{\mathcal J}_{\text{SMWdiag}}$ \eqref{eq_SMWdiag_model} and $\hat{\mathcal J}_{\text{SMWapprox}}$ \eqref{eq_SMW_W} as it was done for the homogeneous setting in Section \ref{sec_comp_domain_homo}. Again, recall that $\hat{\mathcal J}_{\text{TDnum}}$ coincides with $\hat{\mathcal J}_{\text{SMWapprox}}$ and is thus not examined separately.

Figure \ref{fig_errorDomain_inhomo} again shows the maximum relative error $\delta \hat{\mathcal J}$ of these two models over the computational domain. Again, different thresholds of the color bar are shown. As it was already observed in Section \ref{sec_comp_domain_homo}, the model $\hat{\mathcal J}_{\text{SMWapprox}}$ behaves particularly well in regions of homogeneous material. For both models, the largest errors occur at the transition from homogeneous material $\lambda(x) = \lambdaMin$ to inhomogeneous material.

From Fig. \ref{fig_errorDomain_inhomo} it can also be seen that the largest error of $\hat{\mathcal J}_{\text{SMWapprox}}$ is around $315\%$ compared to only about $100\%$ for $\hat{\mathcal J}_{\text{SMWdiag}}$. However, we mention that this effect disappears when a finer mesh is chosen as it is illustrated in Fig. \ref{fig_errorDomain_inhomo_different_h}. There, it can be seen that the maximum error of $\hat{\mathcal J}_{\text{SMWapprox}}$ in the refined mesh is only around $86\%$ which is in the same range as for $\hat{\mathcal J}_{\text{SMWdiag}}$. The reason for this improvement of $\hat{\mathcal J}_{\text{SMWapprox}}$ is that,
for a given function $\lambda(x)$, as the mesh size decreases, the range of values to be averaged in the direct neighborhood of an element becomes smaller which results in a smaller error when computing the average values $\lambda_{T_\ell}^{S_k}$ \eqref{eq_lambdaSj}, $k=1,2,3$. In general, the model $\hat{\mathcal J}_{\text{SMWapprox}}$ behaves well if material variations around a fixed element are small and makes larger approximation errors when large ranges of material values have to be averaged.

\begin{figure}
    \begin{tabular}{cc}
        $\delta \hat{\mathcal J}_{\text{SMWdiag}}$ &
        $\delta \hat{\mathcal J}_{\text{SMWapprox}}$\\
        \includegraphics[width=.45\textwidth, trim=100 0 50 0, clip]{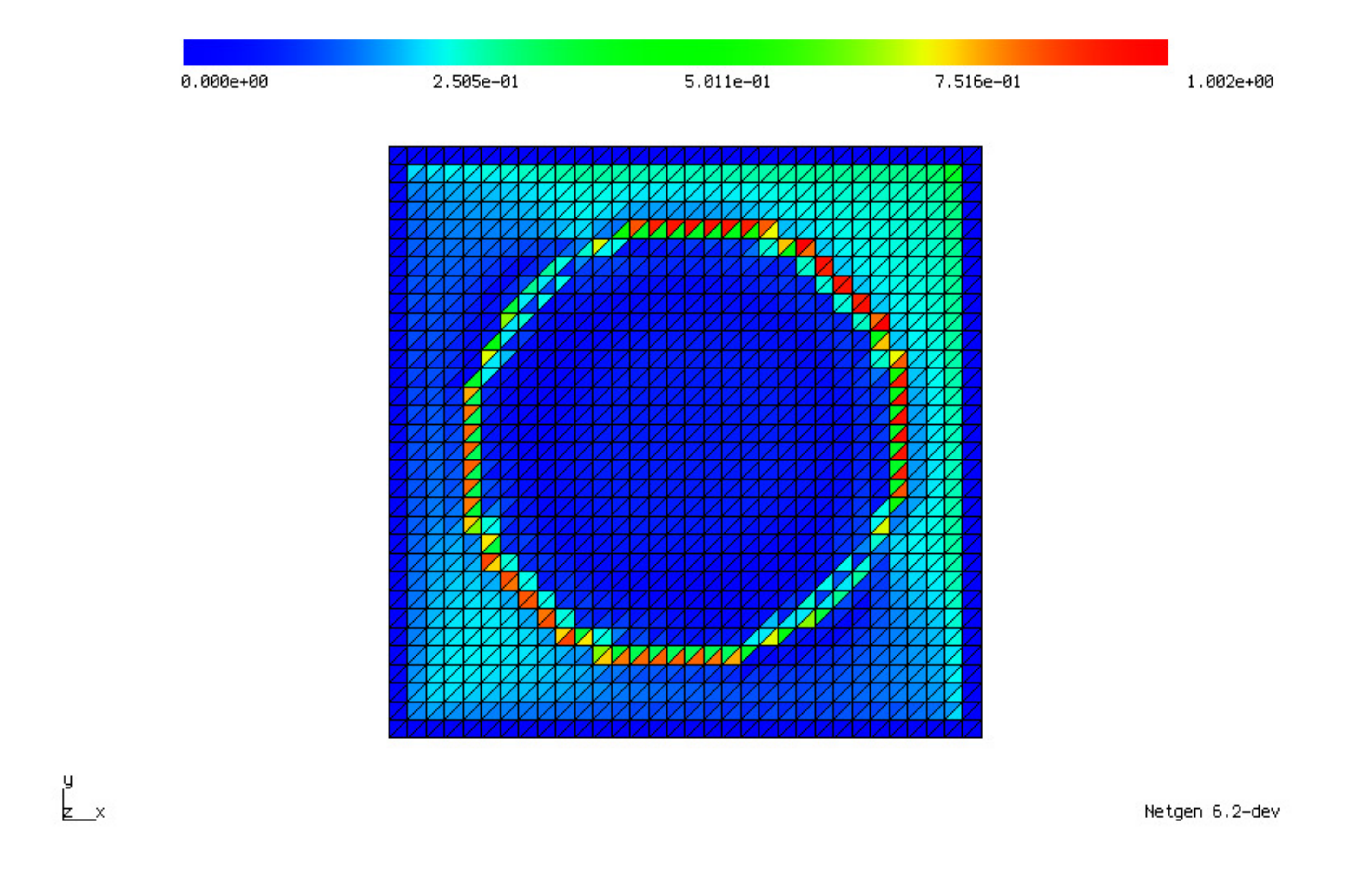} &
        \includegraphics[width=.45\textwidth, trim=100 0 50 0, clip]{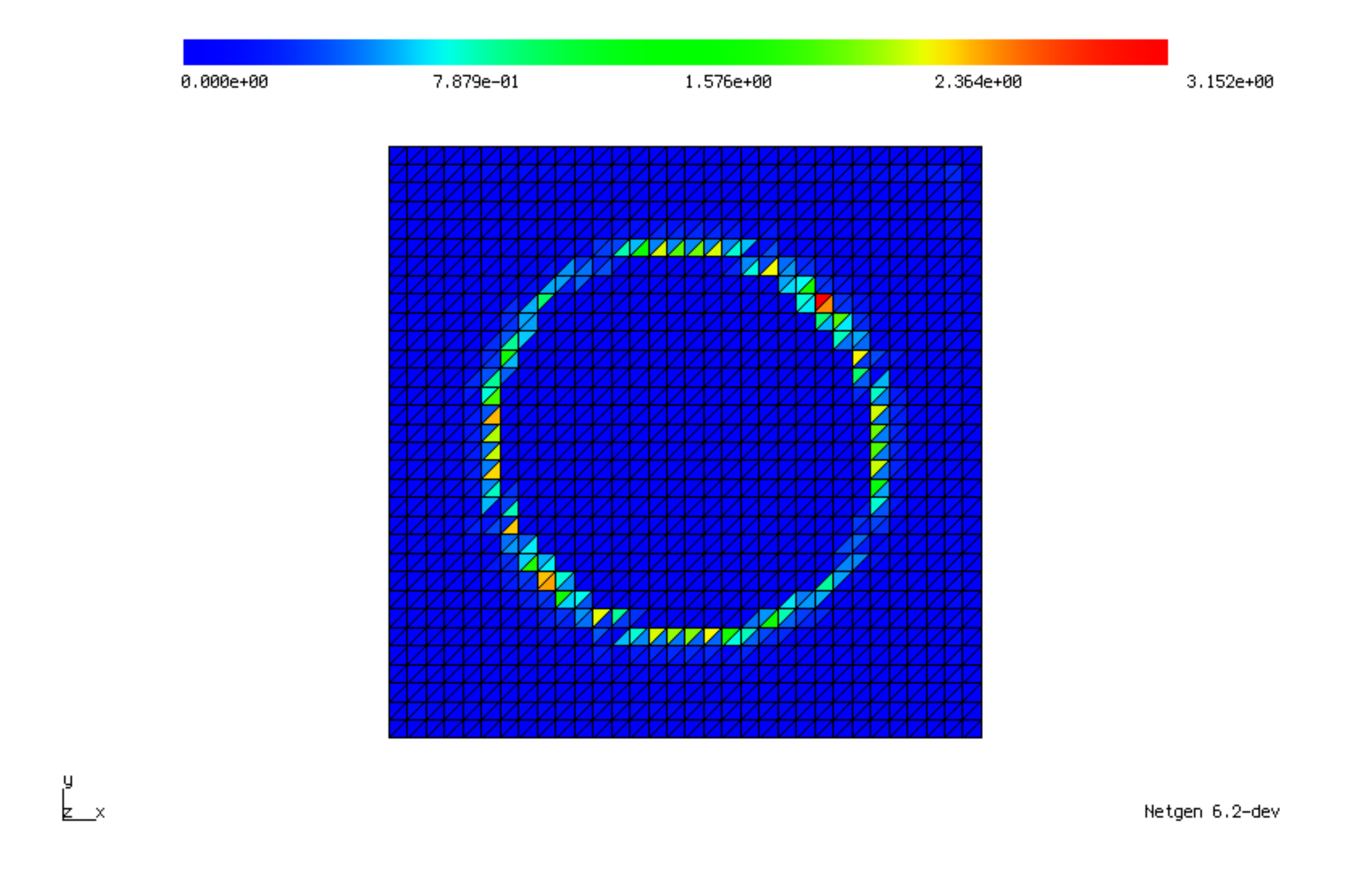} \\
        \includegraphics[width=.45\textwidth, trim=100 0 50 0, clip]{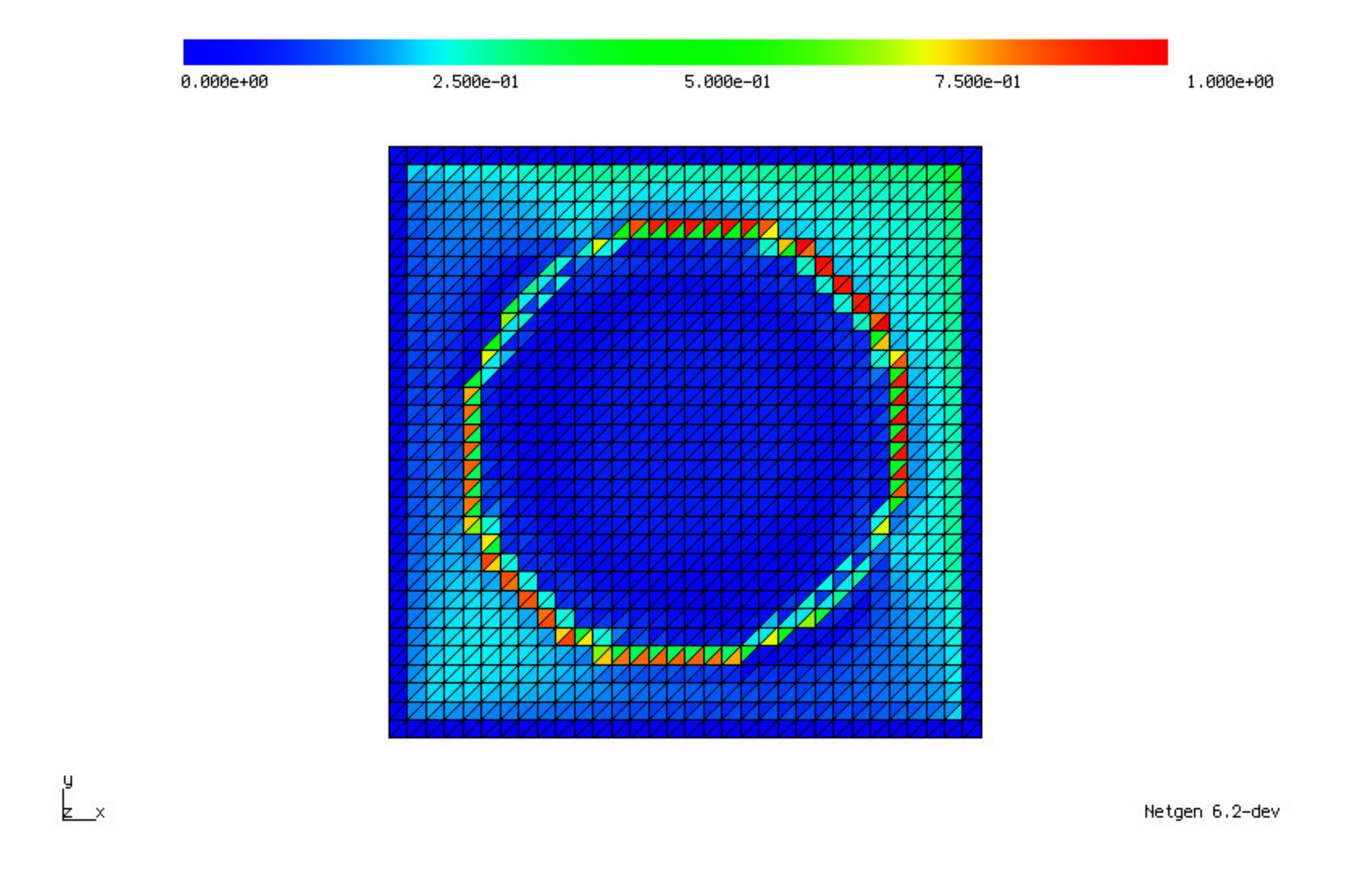} &
        \includegraphics[width=.45\textwidth, trim=100 0 50 0, clip]{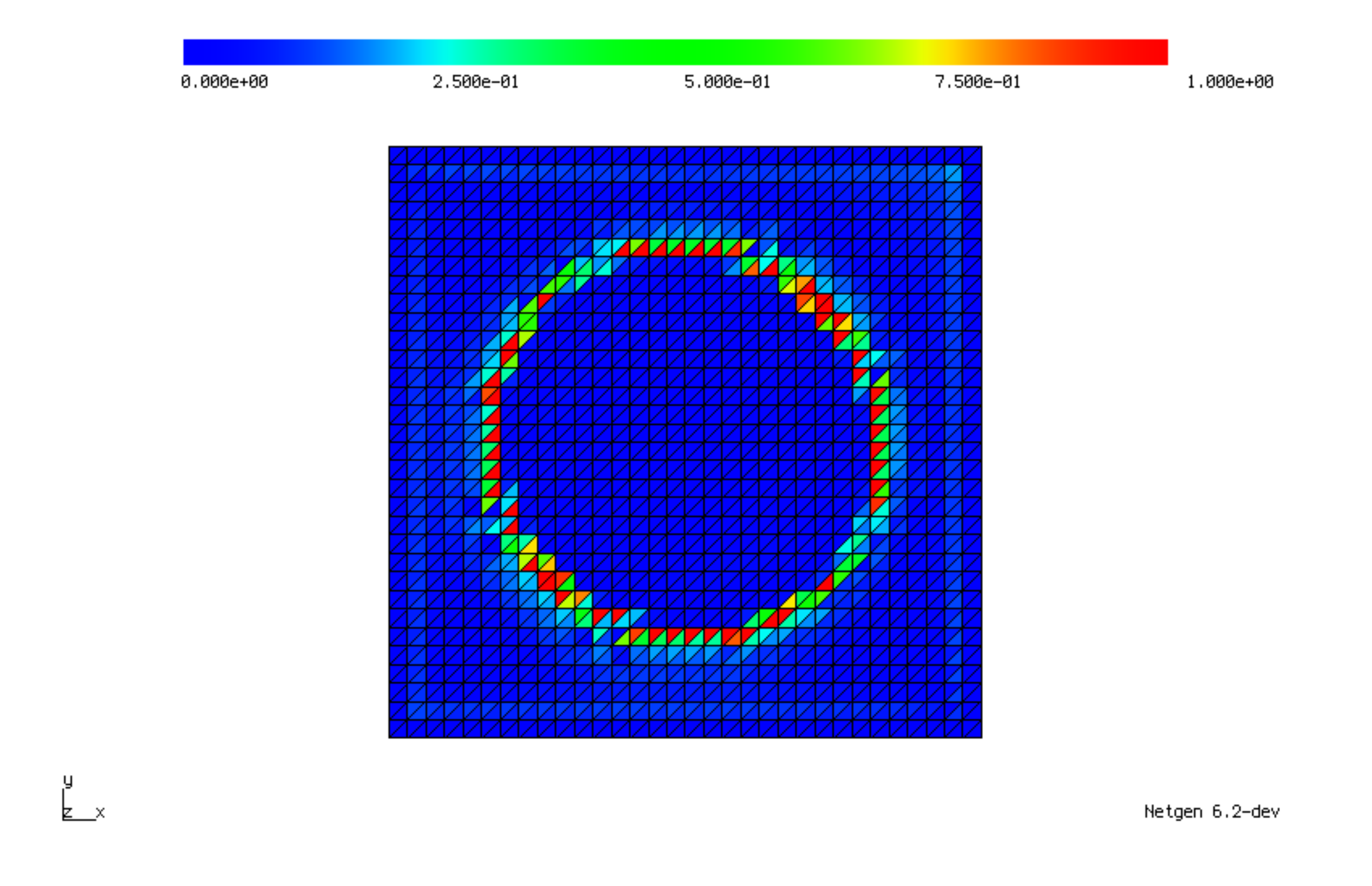} \\
        \includegraphics[width=.45\textwidth, trim=100 0 50 0, clip]{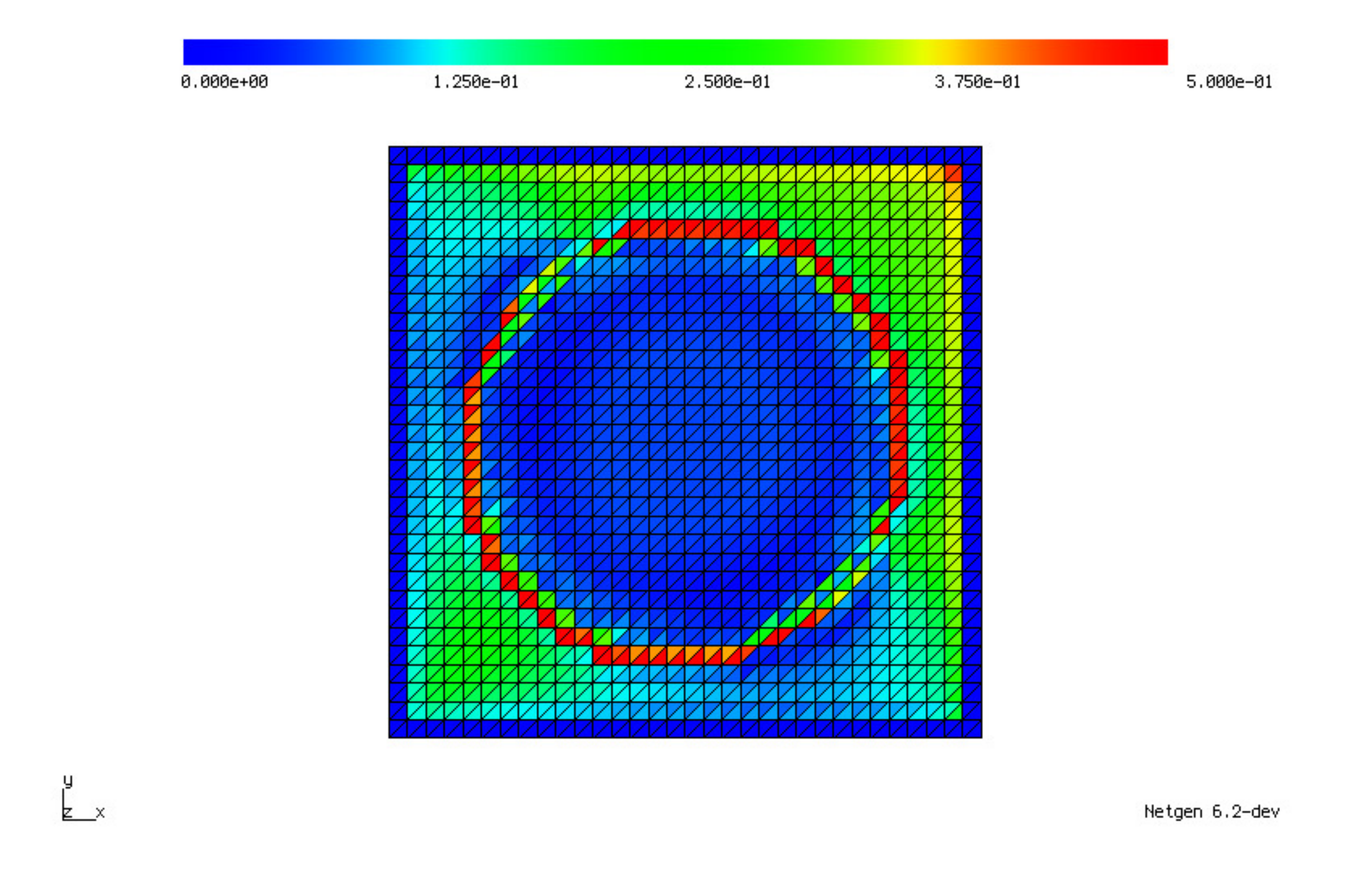} &
        \includegraphics[width=.45\textwidth, trim=100 0 50 0, clip]{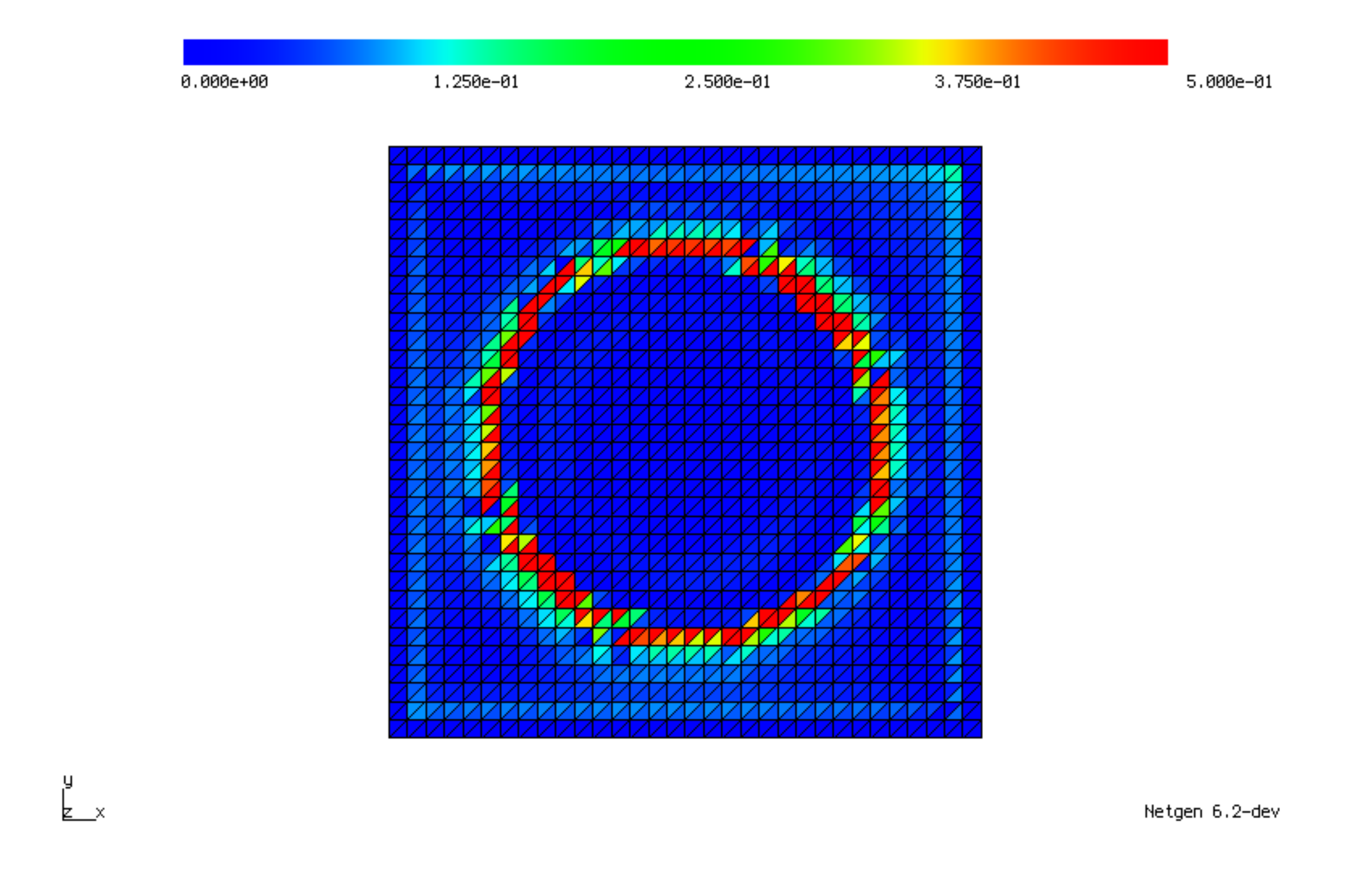} \\
        \includegraphics[width=.45\textwidth, trim=100 0 50 0, clip]{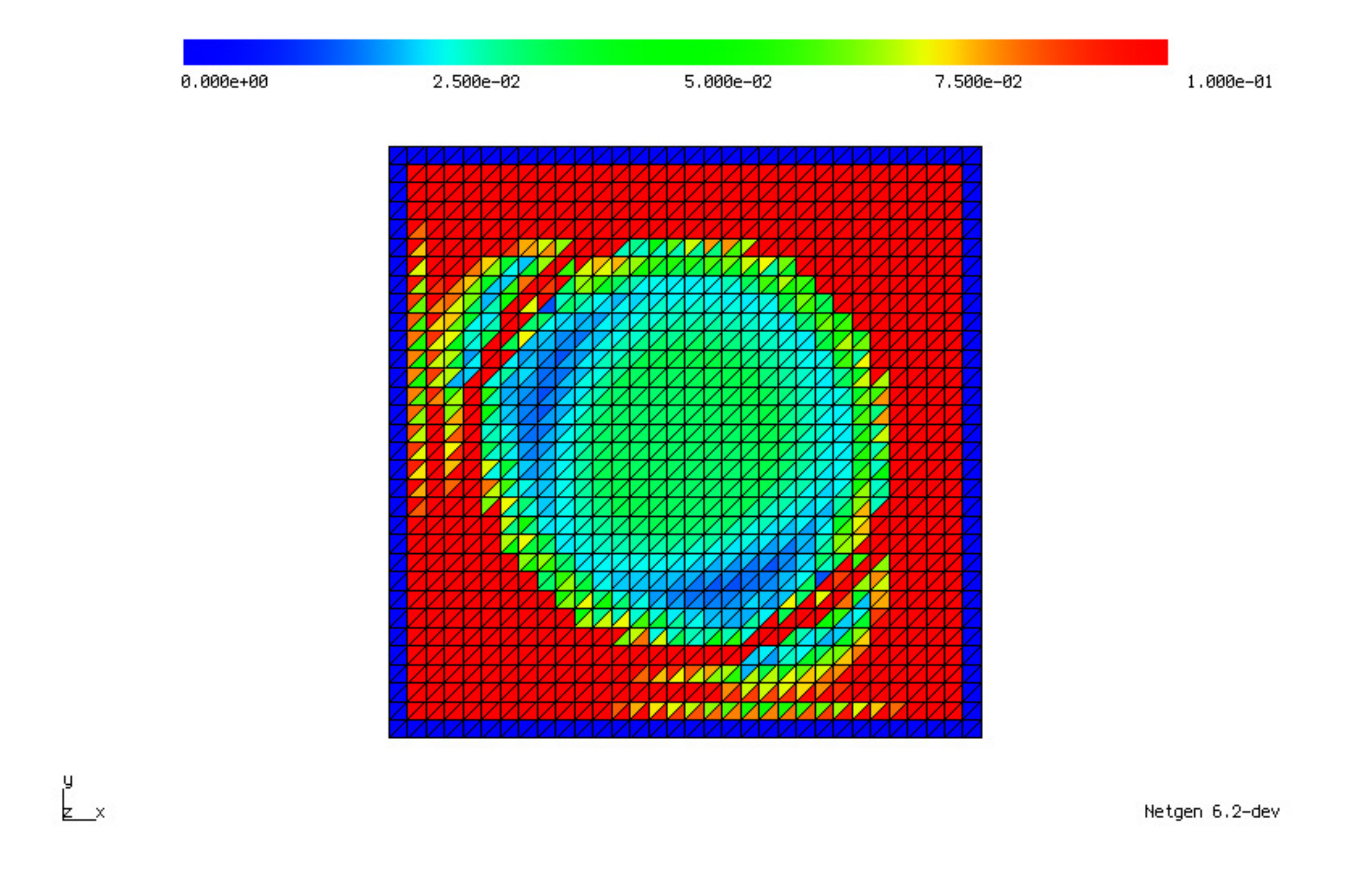} &
        \includegraphics[width=.45\textwidth, trim=100 0 50 0, clip]{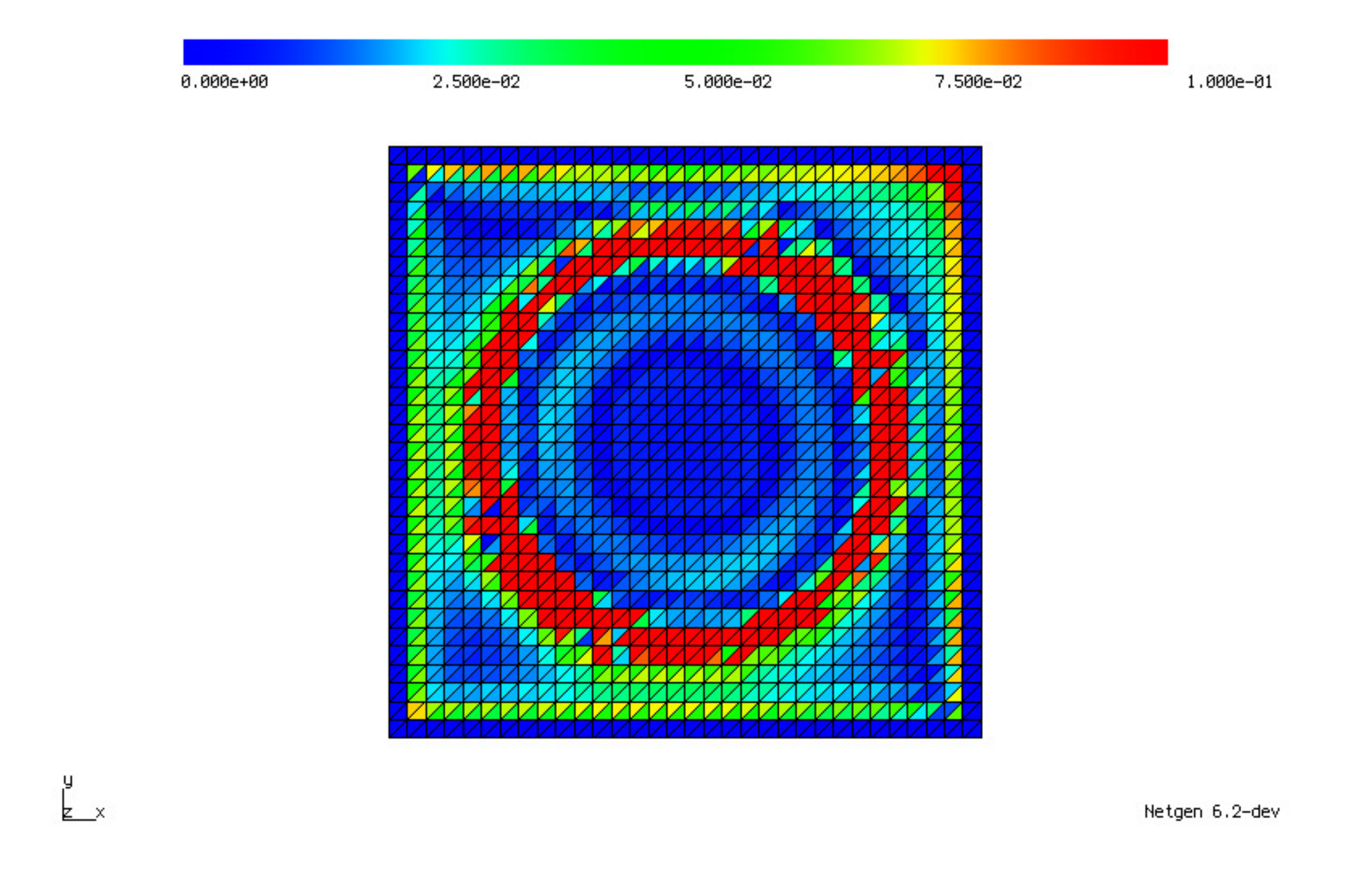}
    \end{tabular}
    \caption{Comparison of relative errors $\delta \hat{\mathcal J}[T_\ell]$ according to \eqref{eq_max_eta} for models $\hat{\mathcal J}_{\text{SMWdiag}}$ \eqref{eq_SMWdiag_model} in left column and $\hat{\mathcal J}_{\text{SMWapprox}}$ \eqref{eq_SMW_W} ($=\hat{\mathcal J}_{\text{TDnum}}$ \eqref{eq_defTDnum_model}) in right column for all interior elements $T_\ell$ in inhomogeneous setting. First line shows color plot according to their respective maximum errors. Second to fourth line show threshold for maximum relative error at $100 \%$, $50\%$ and $10\%$, respectively. Errors in elements touching the boundary are not computed.}
    \label{fig_errorDomain_inhomo}
\end{figure}

\begin{figure}
    \begin{tabular}{ccc}
     \includegraphics[width=.33\textwidth, trim=100 0 50 0, clip]{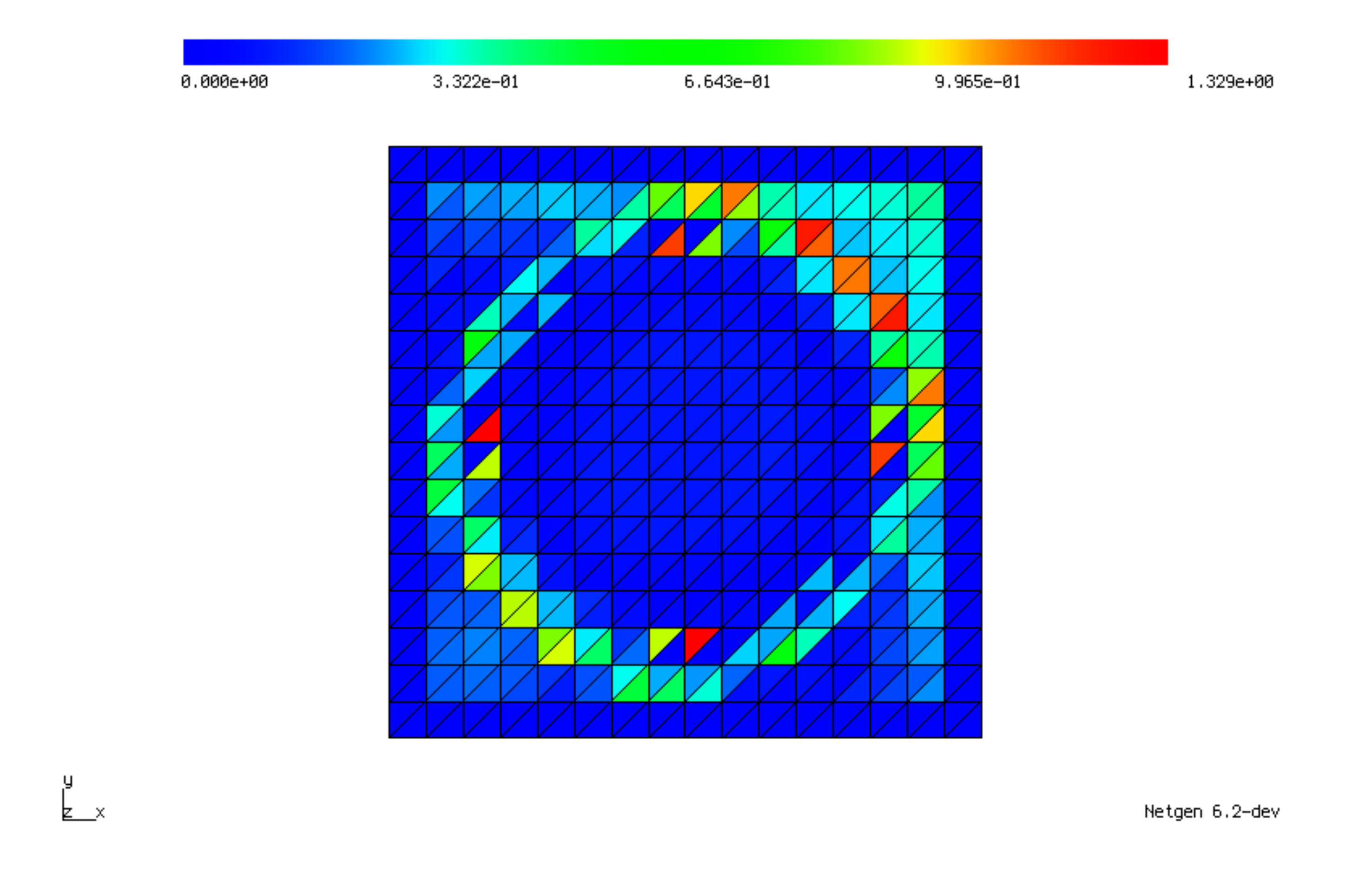} &
     \includegraphics[width=.33\textwidth, trim=100 0 50 0, clip]{rel_SMWdiag-eps-converted-to.pdf} &
     \includegraphics[width=.33\textwidth, trim=100 0 50 0, clip]{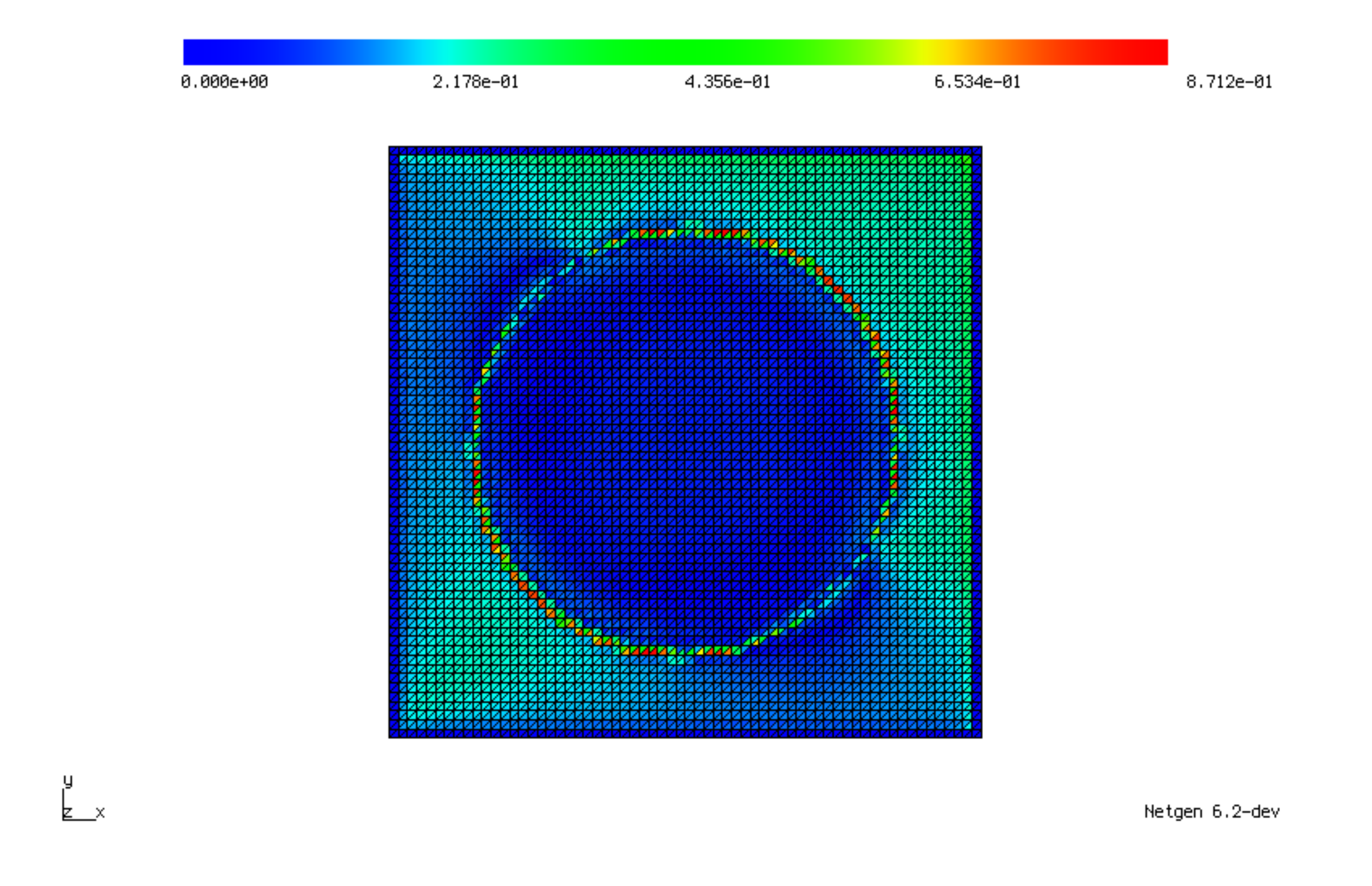}\\
     \includegraphics[width=.33\textwidth, trim=100 0 50 0, clip]{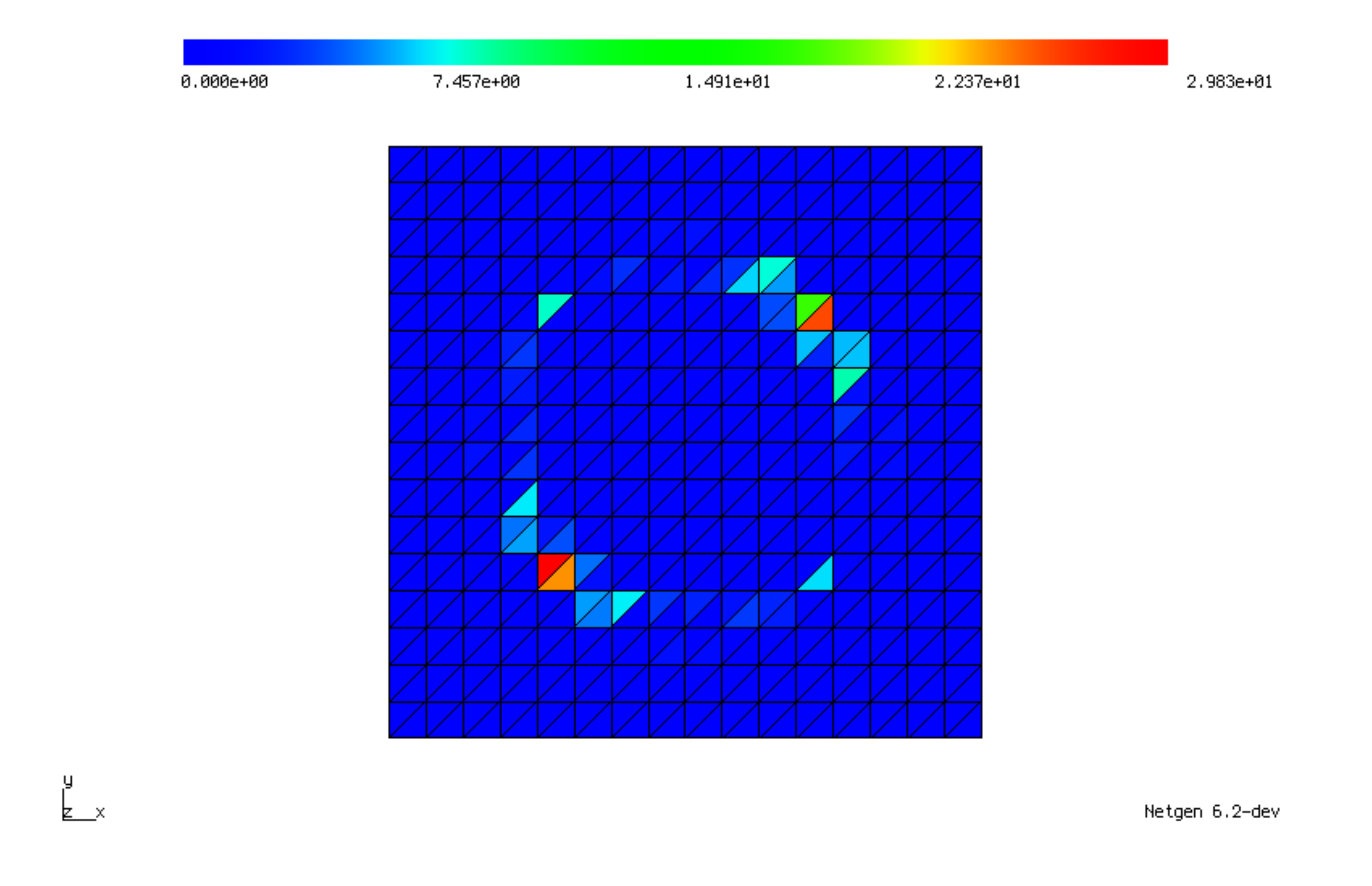} &
     \includegraphics[width=.33\textwidth, trim=100 0 50 0, clip]{rel_SMWUBprec_alpham0p5-eps-converted-to.pdf} &
     \includegraphics[width=.33\textwidth, trim=100 0 50 0, clip]{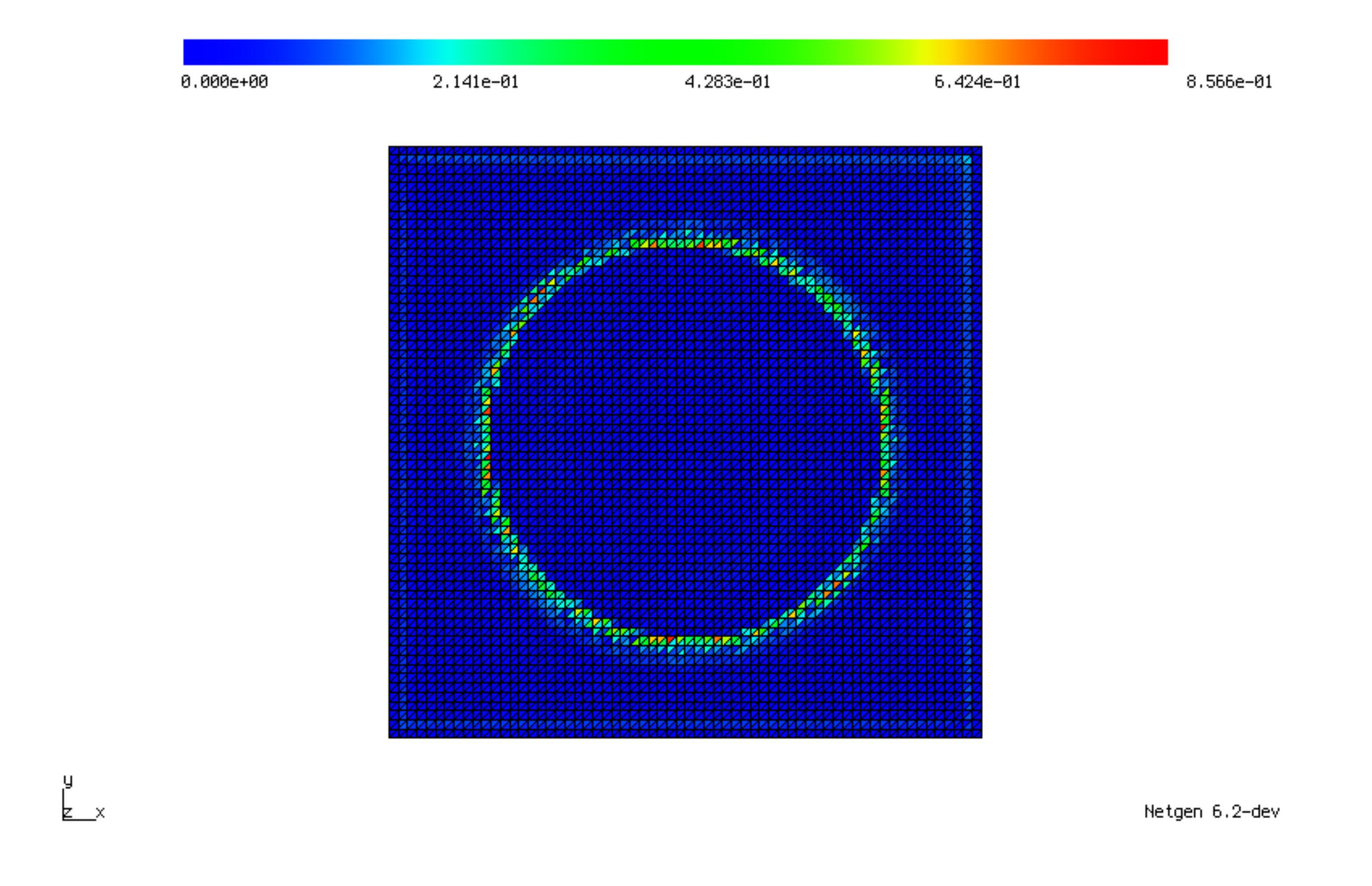}
    \end{tabular}
    \caption{Comparison of relative errors in inhomogeneous setting for different mesh sizes for models $\hat{\mathcal J}_{\text{SMWdiag}}$ (top row) and $\hat{\mathcal J}_{\text{SMWapprox}} = \hat{\mathcal J}_{\text{TDnum}}$ (bottom row). Errors in elements touching the boundary are not computed.}
    \label{fig_errorDomain_inhomo_different_h}
\end{figure}

\subsection{Further improvements}
Finally, we point out several directions in which this research could be extended to further improve the models $\hat{\mathcal J}_{\text{SMWapprox}}$, $\hat{\mathcal J}_{\text{TDnum}}$.

\subsubsection{Boundary regions} \label{sec_elemsBdy}
So far, we restricted our numerical results to regions away from the boundary and did not treat elements that touch the boundary. The reason for this is that, in the derivation of the models \eqref{eq_defTDnum_model} and \eqref{eq_SMW_W}, the truncated exterior problems \eqref{eq_K_That_FEM} and \eqref{eq_W_That_inhomo} are obtained by zooming in around the fixed element $T_\ell$ and rescaling. Thus, in the case where $T_\ell$ touches the boundary, the truncated domains depicted in Figures \ref{fig_pert_rescaled_h0__trunc} and \ref{fig_smwapprox_pert_rescaled_h0__trunc} do not mimic the neighborhood of $T_\ell$. Instead, it would be more appropriate to perform precomputations on truncated half spaces as depicted in Figure \ref{fig_extProblem_bdy}. The figure shows the setting of problem \eqref{eq_W_That_homo} in the case of a homogeneous material distribution corresponding to elements touching the top boundary. Also, here, inhomogeneous material can be treated by precomputing a range of combinations of material values in an offline phase and interpolating averaged sector values in the online phase. Here, the precomputation becomes a bit more involved since, in addition to accounting for different materials and different element types and $k=1,2$ in \eqref{eq_W_That_homo}, one also has to distinguish between a left, bottom, right or top boundary as well as between Dirichlet or Neumann conditions imposed on that boundary. Thus, in order to also treat boundary regions of a rectangular domain $\Dsf$, an additional $32N^4$ truncated half space problems have to be solved in the offline phase. Here, $N$ is the number of used material values, e.g., $N=16$.

\begin{figure}
    \begin{tabular}{cc}
        \includegraphics[width=.5\textwidth]{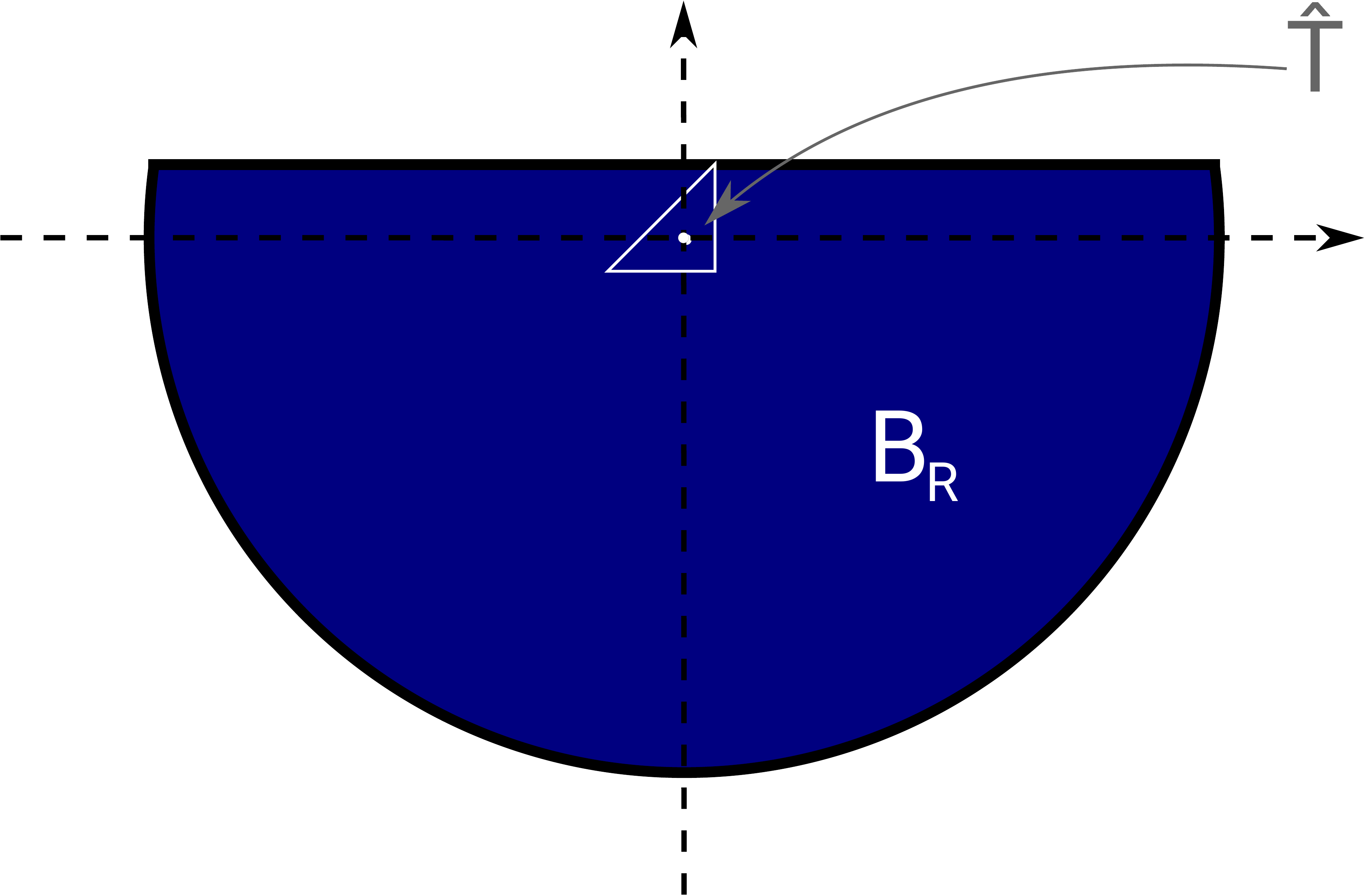} &
        \includegraphics[width=.5\textwidth]{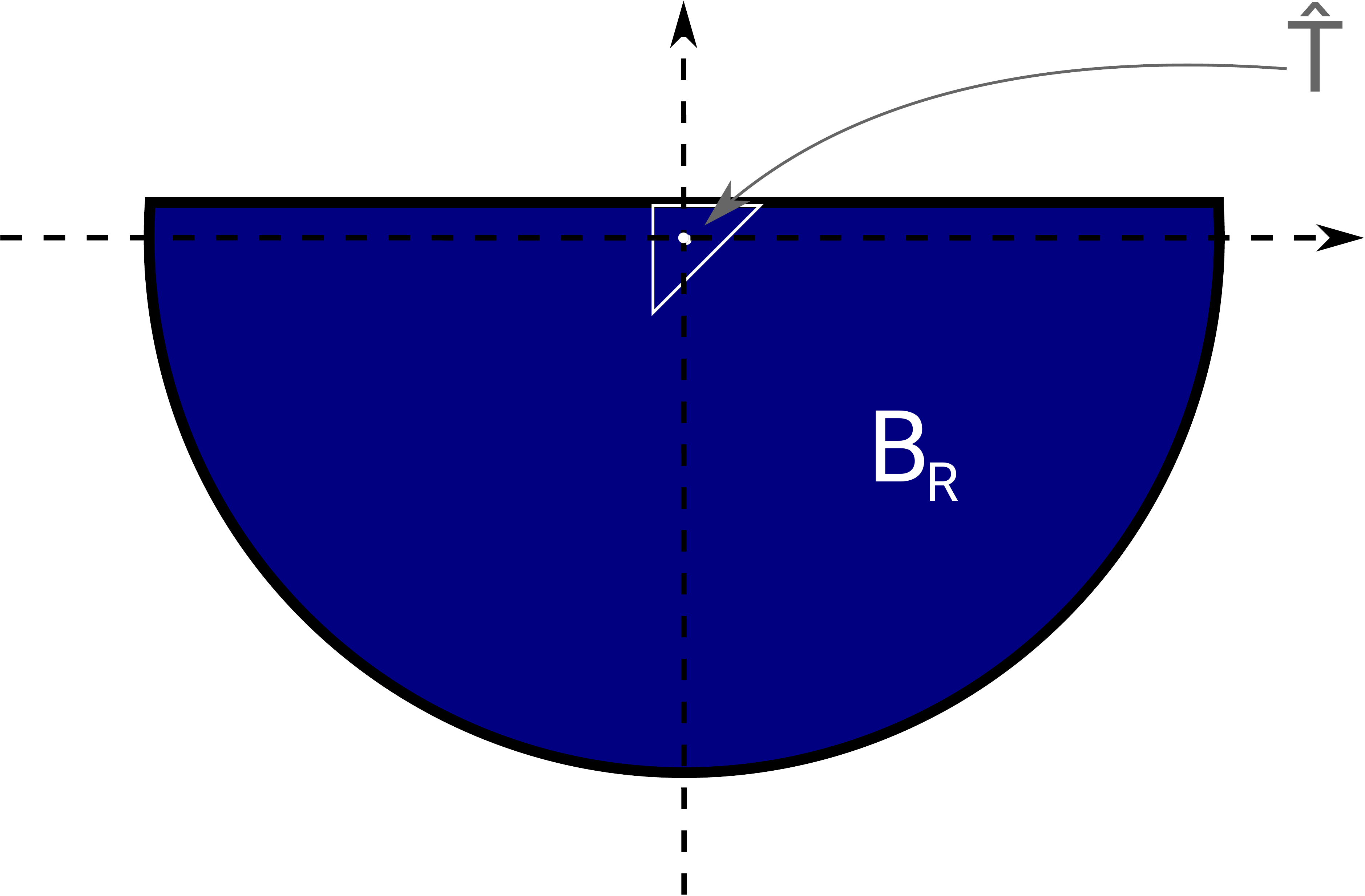} \\
        (a) & (b)
    \end{tabular}
    \caption{Setting for exterior problem \eqref{eq_K_That_FEM} corresponding to elements (a) of type 1 or (b) of type 2 that touch a top boundary. On the circular part of the boundary of $B_R$, homogeneous Dirichlet conditions are set. The boundary conditions at top can be either of Dirichlet or Neumann type, depending on the physical problem.}
    \label{fig_extProblem_bdy}
\end{figure}

In Fig. \ref{fig_improvementBdy}, we illustrate the improvement when elements touching the top boundary are given a special treatment by precomputing the matrix $\BGam_{\hat T}[(\Blam_\ell, \lambda_{T_\ell}^{S_1}, \lambda_{T_\ell}^{S_2}, \lambda_{T_\ell}^{S_3})]$ also for the reference elements $\hat T$ depicted in Figure \ref{fig_extProblem_bdy}. In addition to the data presented in Figure \ref{fig_errorDomain_inhomo}, we also computed the maximum relative errors in all elements touching the top boundary (of Neumann type). If the same data as in the interior is used, the maximum relative error is attained in the elements at the boundary and is as high as $77.89\%$. When the mentioned treatment of the boundary elements is used, the maximum error is still attained at an interior element and is only $16.6\%$.
\begin{figure}
    \begin{tabular}{cc}
        \includegraphics[width=.5\textwidth, trim = 0 500 0 0, clip]{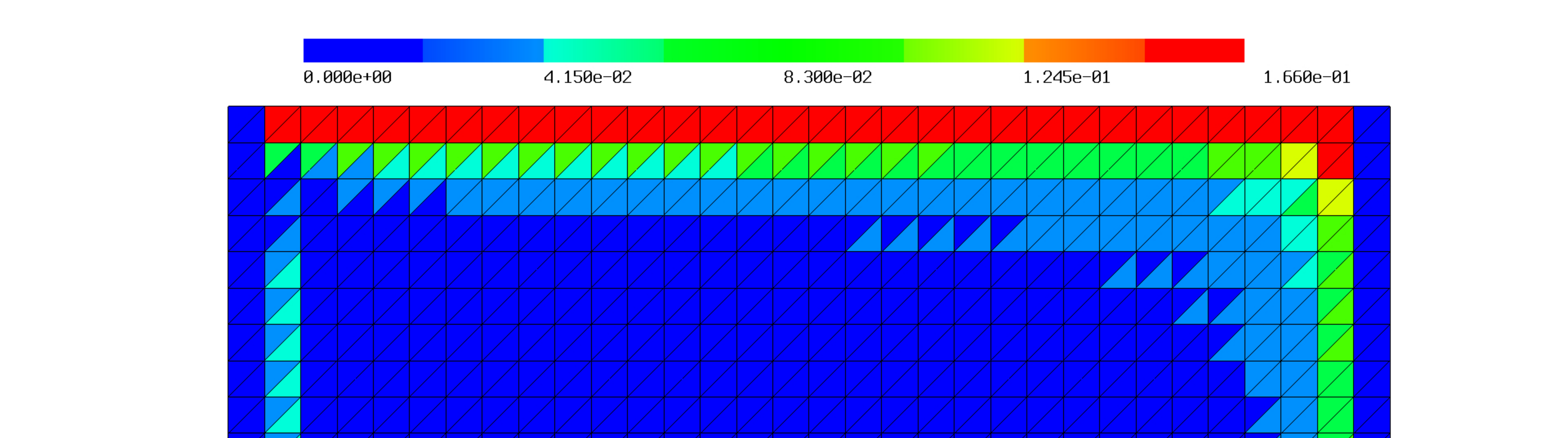} &
        \includegraphics[width=.5\textwidth, trim = 0 500 0 0, clip]{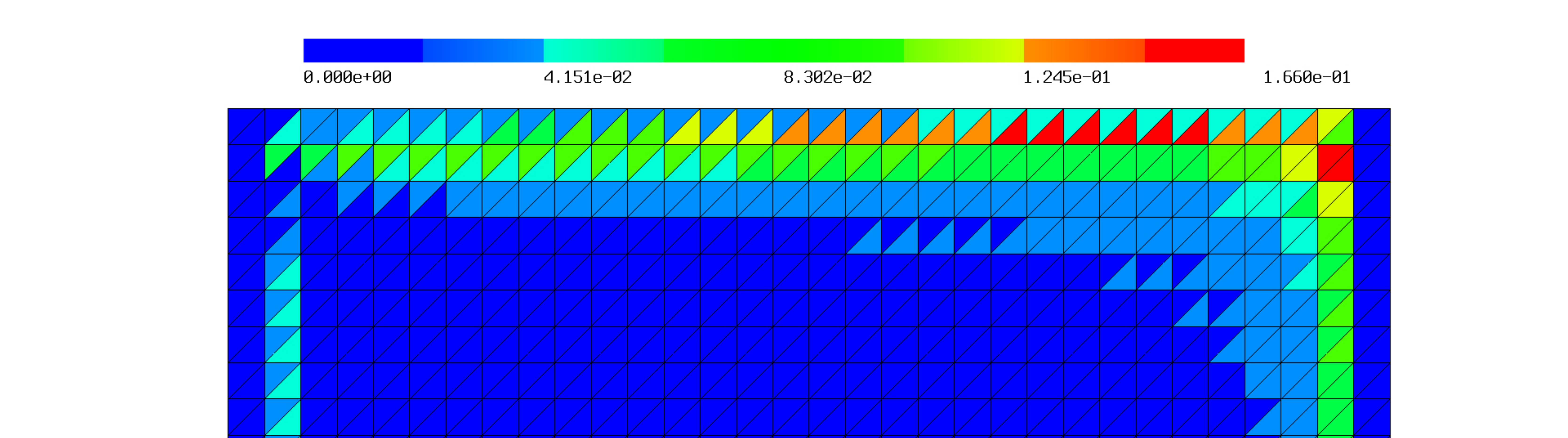} \\
        (a) & (b)
    \end{tabular}
    \caption{Comparison of model $\hat{\mathcal J}_{\text{SMWapprox}}$ in Neumann boundary region (a) without and (b) with special precomputation using truncated half spaces as depicted in Fig. \ref{fig_extProblem_bdy}. For comparison, the same color scale that is cut off at $16.6\%$ is used in (a) and (b). The maximum relative error in (a) is $77.89\%$ whereas it is only $16.6\%$ in (b).}
    \label{fig_improvementBdy}
\end{figure}

\subsubsection{Averaging of inhomogeneous material distribution}
We mention that we observed that the way material values are averaged over sectors has a strong impact on the obtained relative error. Figure \ref{fig_errorDomain_inhomo_different_alpha} shows the same experiments as discussed in Section \ref{sec_num_inhomo} for different values of $\alpha$ in the averaging process \eqref{eq_lambdaSj}. For $\alpha = 1$, the weighted H\"older mean \eqref{eq_lambdaSj} reduces to the weighted arithmetic mean which yields a large maximum error of about $1300\%$, see Fig. \ref{fig_errorDomain_inhomo_different_alpha}(a). Recall that our choice $\alpha = -0.5$ yielded the result in the right column of Fig. \ref{fig_errorDomain_inhomo} with maximal value of $\delta\hat{\mathcal J} \approx 315\%$. Further numerical studies for $\alpha = -0.2$ and $\alpha = -0.1$ are depicted in Fig. \ref{fig_errorDomain_inhomo_different_alpha}(b)-(c) showing that for $\alpha = -0.2$ the maximal error in the mesh is actually smaller than for the model $\hat{\mathcal J}_{\text{SMWdiag}}$.

Moreover, one might want to think of decomposing the domain $B_R(0)$ into more than three sectors in order to reduce the error made by the averaging of inhomogeneous material parameters. However, here one should keep in mind that the number of truncated exterior problems to be solved in the precomputation phase with $n_{\text{sec}}$ sectors and $N$ material points is of the order $N^{n_{\text{sec}}+1}$ and thus grows very fast with $n_{\text{sec}}$.

Finally, taking the average of material values on more than one layer of elements (cf. Fig. \ref{fig_illustrateAvg}) together with suitable distance-dependent weights could lead to a better representation of the local material configuration and thus to potentially higher accuracy of the models  $\hat{\mathcal J}_{\text{SMWdiag}}$ and $\hat{\mathcal J}_{\text{TDnum}}$ in an inhomogeneous setting.

\begin{figure}
    \begin{tabular}{ccc}
     \includegraphics[width=.33\textwidth, trim=100 0 50 0, clip]{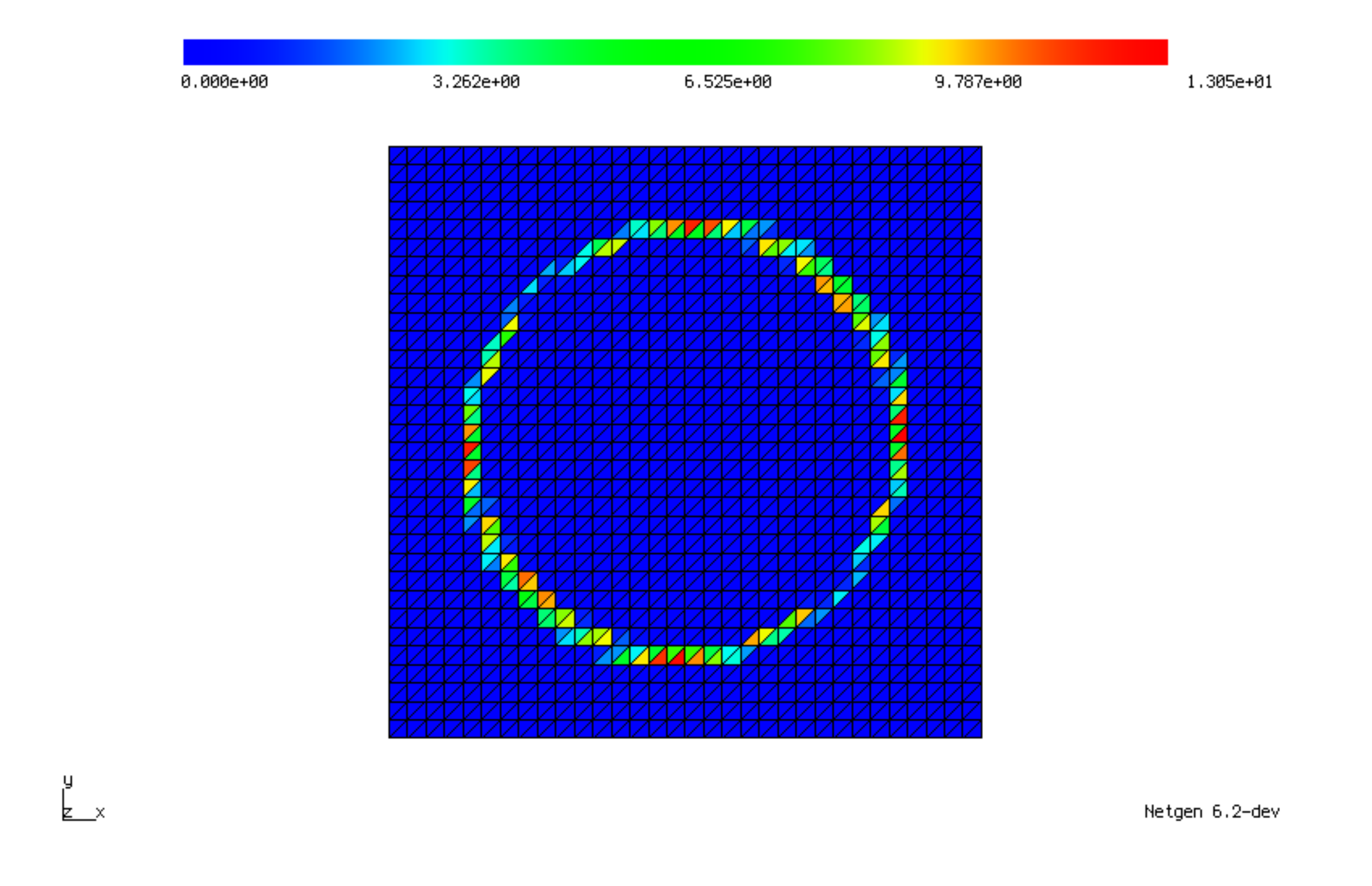} &
     \includegraphics[width=.33\textwidth, trim=100 0 50 0, clip]{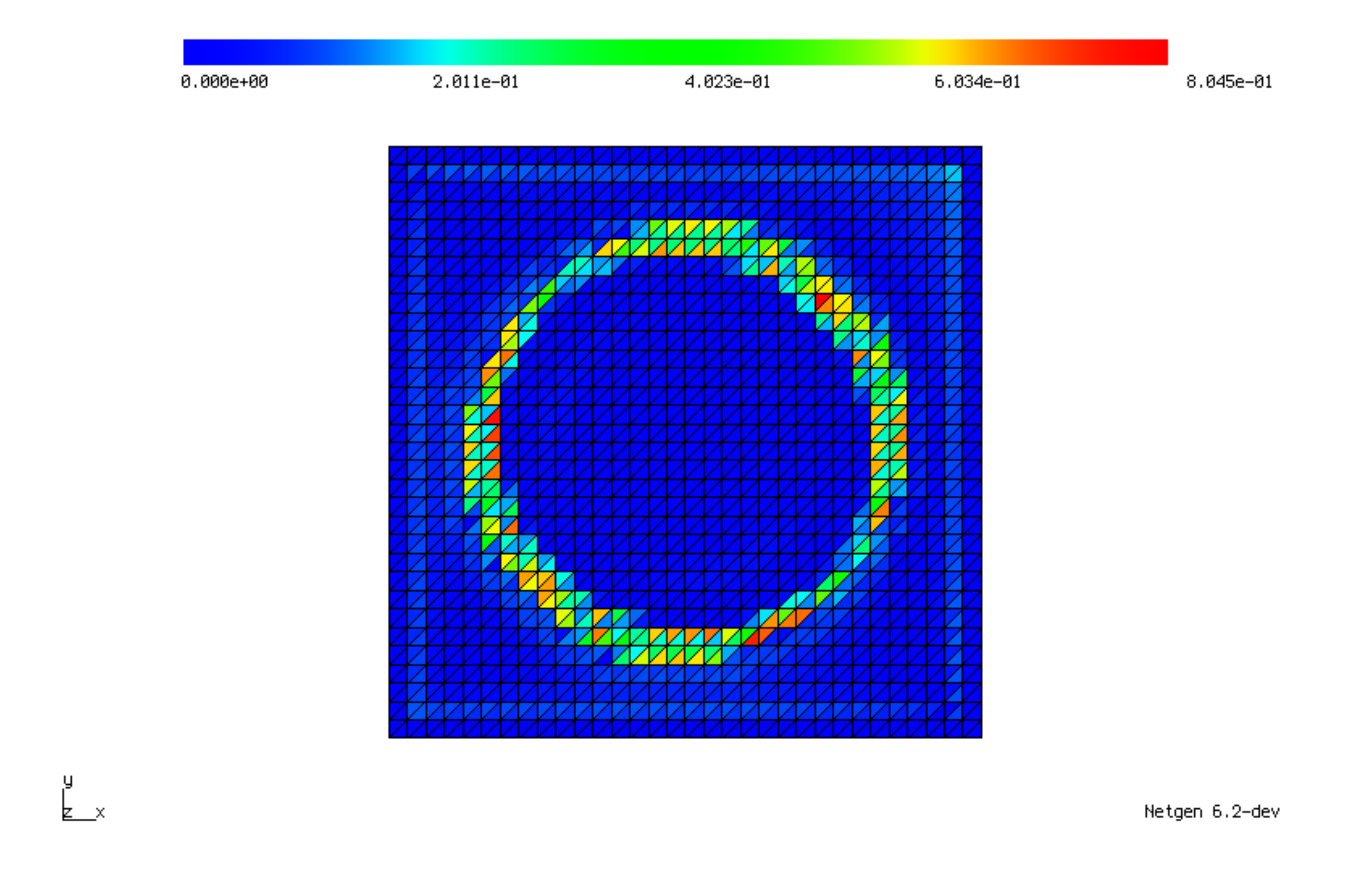} &
     \includegraphics[width=.33\textwidth, trim=100 0 50 0, clip]{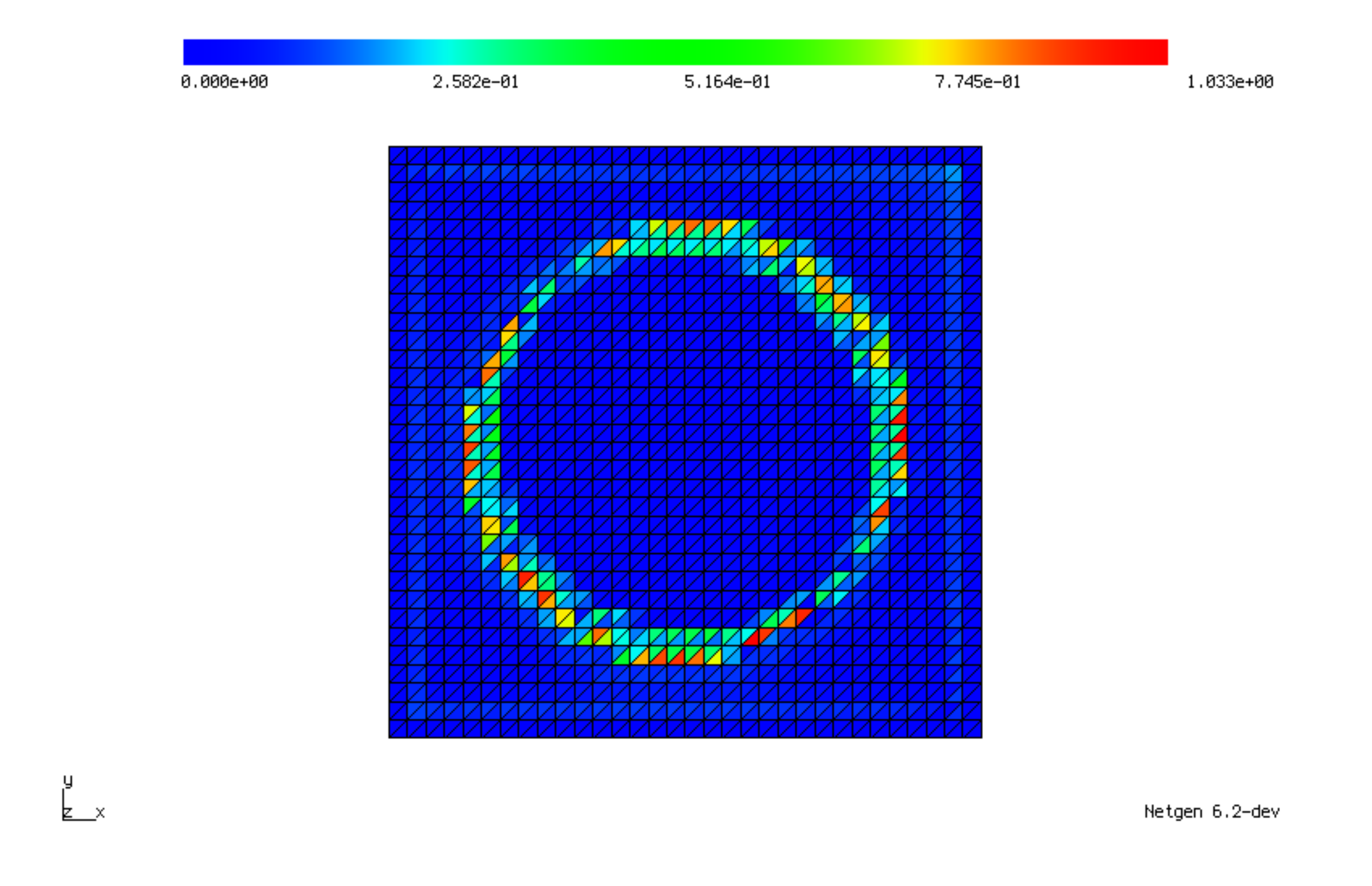} \\
     (a) $\alpha = 1$ & (b) $\alpha = -0.2$ & (c) $\alpha = -0.1$
    \end{tabular}
    \caption{Maximum relative error $\delta \hat{\mathcal J}_{\text{SMWapprox}}$ over computational domain for different averaging parameters $\alpha$ in \eqref{eq_lambdaSj}.}
    \label{fig_errorDomain_inhomo_different_alpha}
\end{figure}

\section*{Conclusion and Outlook}
In this paper, we introduced and examined different separable approximations to a discretized topology/material optimization problem. The Sherman-Morrison-Woodbury formula applied to the perturbed finite element stiffness matrix yielded a first separable exact model which, however, is prohibitively expensive to evaluate. A diagonal approximation of the stiffness matrix yielded a first tractable model. We introduced a model that is motivated by the continuous concept of topological derivatives for triangular inclusion shapes. Moreover, we also introduced a model that approximates the Sherman-Morrison-Woodbury model with high accuracy by performing similar rescaling steps as in the topological derivative model. Subsequently, we showed the somewhat surprising result that these latter two models coincide. Finally, we compared the performances of all models numerically. While the diagonal approximation of the Sherman-Morrison-Woodbury model can be evaluated very efficiently without any problems, the new models need data to be precomputed in an offline phase. In our model problem, however, we saw that the newly introduced models show significantly higher accuracy in most regions of the domain.

This work presented here can be extended and continued in several directions.
\begin{itemize}
 \item We illustrated our methods for the case of the compliance functional in a stationary heat equation. We emphasize that this model was chosen for compactness of presentation and that extensions to other cost functions and other linear PDE constraints (e.g., linear elasticity) can be obtained in a rather straight-forward way (possibly yielding slightly more technical formulas). An extension to non-selfadjoint problems (including also nonlinear cost functions) could be realized taking into account Remark \ref{rem_cost_nonselfadj}. An extension of models based on the Sherman-Morrison-Woodbury formula to other linear PDE constraints is straightforward since the structure of the discretized problem is the same as for our model problem. The topological derivative model can be extended to other PDE constraints following the general systematic procedure presented in \cite{GanglSturm_TDauto}.

 \item In this paper, we always assumed a structured mesh of a certain mesh topology to be given. While this is a common assumption made in many publications on topology optimization, an extension to general meshes with arbitrary element shapes and sizes would be an interesting topic of future research. In this setting, one might want to parametrize the shape of triangles. Then one could perform the precomputation for a (small) number of sample triangle shapes and interpolate their data in order to treat a family of element shapes.

 \item The extension of the proposed approaches to nonlinear PDE constraints such as nonlinear elasticity or nonlinear magnetostatics is another interesting yet challenging task. Also, here, the general procedure for obtaining topological derivatives \cite{GanglSturm_TDauto} could be used to establish a model similar to $\hat{\mathcal J}_{\text{TDnum}}$.

 \item Finally, the ultimate goal of this research is to obtain good approximate sub-problems in an iterative optimization algorithm. While solving the actual optimization problem was beyond the scope of this paper and subject of future research, we mention that this can be carried out in a similar way to \cite{NeesEtAl2022}. In particular, in \cite{NeesEtAl2022} it was shown that a sequential global programming approach with a diagonal approximation of a Sherman-Morrison-Woodbury model was superior to the well-established method of moving asymptotes (MMA) \cite{Svanberg1987} in terms of both number of optimization iterations and quality of obtained solutions. A similar or even better behavior is expected when replacing the diagonal approximation model to our model $\hat{\mathcal J}_{\text{SMWapprox}}$.
\end{itemize}

\paragraph{Acknowledgements:}
This work has been funded by the Deutsche Forschungsgemeinschaft (DFG, German Research Foundation) -- SFB 1411 (project ID 416229255) and  SFB 814 (project ID 61375930).
The work of P. Gangl is supported by the joint DFG/FWF Collaborative Research Centre CREATOR (CRC -- TRR361/F90) at TU Darmstadt, TU Graz and JKU Linz.

 \bibliographystyle{plain}
 \bibliography{sgptd}

\end{document}